\documentclass[11pt]{article}
\usepackage{amsmath,fullpage,amsthm}
\usepackage{amscd}
\usepackage{amsthm,mathtools} \usepackage{amssymb}
\usepackage{latexsym}
\usepackage{eufrak}
\usepackage{euscript}
\usepackage[T1]{fontenc}

%\usetikzlibrary{calc,arrows,positioning}
%\usetikzlibrary{matrix,decorations.pathreplacing,backgrounds,positioning}
%\usetikzlibrary{calc}

\usepackage[width=18cm,height=25cm]{geometry}
\usepackage{epsfig}
\usepackage{tikz}
\usepackage{graphics}
\usepackage{todonotes}
\usepackage{array}
\usepackage{enumerate} \usepackage{authblk}

\newtheorem{theorem}{Theorem}[section]

\newtheorem{corollary}[theorem]{Corollary}

\newtheorem{problem}[theorem]{Problem}
\theoremstyle{definition}

\theoremstyle{proof}
\theoremstyle{observation}

\newtheorem{proposition}{Proposition}

\begin{document}
  \setcounter{Maxaffil}{2}
   \title{Co-maximal subgroup graph characterized by forbidden subgraphs}

  \author[a]{Pallabi Manna\thanks{mannapallabimath001@gmail.com}}

   \author[b ]{Santanu Mandal\thanks{santanu.vumath@gmail.com}}

   \author[c]{ Manideepa Saha\thanks{manideepasaha1991@gmail.com}}

\affil[a ]{Harish Chandra Research Institute,}
\affil[ ]{Prayagraj-211019, India}

   \affil[ b]{School of Computing Science and Engineering,}
  \affil[ ]{Vellore Institute of Technology,}
   \affil[ ]{Bhopal - 466114, India}

\affil[c]{Department of Mathematics,}
\affil[ ]{Presidency University,}
\affil[ ]{College Street, Kolkata-700073, India}

   \maketitle
\begin{abstract}
In this communication, the co-maximal subgroup graph $\Gamma(G)$ of a finite group $G$ is examined when $G$ is a finite nilpotent group, finite abelian group, dihedral group $D_n$, dicyclic group $Q_{2^n}$, and $p$-group. We derive the necessary and sufficient conditions for $\Gamma(G)$ to be a cluster graph, triangle-free graph, claw-free graph, cograph, chordal graph, threshold graph and split graph.  For the case of finite nilpotent group, we are able to classify it entirely. Moreover, we derive the complete structure of finite abelian group $G$ such that $\Gamma(G)$ is a split graph. We leave the readers with a few unsolved questions.
\end{abstract}
\textbf{AMS Subject Classification: } 05C25.\\
\textbf{Keywords: } Co-maximal graph, nilpotent group, cluster graph, triangle-free graph, claw-free graph, cograph, chordal graph, threshold graph, split graph.

%%%%%%%%%%%%%%%%%%%%%%%%%%%%%%%%%%%%%%%%%%%%%%%%%%%%%%

\section{Introduction}
In graph theory, forbidden subgraphs play a significant role in identifying various classes of graphs. Several graph classes including bipartite graphs, trees, perfect graphs, cographs, claw-free graphs, line graphs, chordal graphs, split graphs, cluster graphs etc. can be characterized in terms of different forbidden subgraphs. Let $\Gamma'=(V, E)$ be a graph and $S\subset V$. Then $G[S]$ is said to be an induced subgraph of $\Gamma'$ if the vertex set of $G[S]$ is $S$ and edge set contains the edges that have both end points in $S$. If a graph $\Gamma'$ does not contain a graph $F$ as its induced subgraph then $\Gamma'$ is called a $F$-free graph; or in other words $F$ is the forbidden subgraph of $\Gamma'$. In this paper we consider the graph classes namely cluster graphs, triangle-free graphs, claw-free graphs, cographs, chordal graphs, split and threshold graphs.

%%% NOTE %%%
%\todo[inline]{Include here the standard definition of co-maximal graph along with some important literature reviews. Recall few well-known theorems that are relevant to our study from the literature.\\ KEEP OTHER WRITINGS UNALTERED AND DON'T REMOVE THIS BOX.}
%%% NOTE %%%

 We completely classify all finite nilpotent groups $G$ such that its co-maximal subgroup graph $\Gamma(G)$ belongs to the class of  cluster graphs, triangle-free graphs, claw-free graphs, cographs and threshold graphs. Moreover, we analyze the nilpotent groups which are not of prime power order  such that the co-maximal subgroup graph is a chordal graph as well as a split graph. We derive the complete structure of finite abelian groups $G$ such that $\Gamma(G)$ is a split graph. \\
Let us recall the definition of nilpotent group from group theory. A group $G$ is called a nilpotent group if it is the direct product of its Sylow $p$-subgroups for all primes $p$ dividing the order of the group. Let us consider the Dihedral group $D_{n}$ of order $2n$ and the Dicyclic group $Q_{2^n}$, where $n \geq 3$. $D_{n}$ has the following well-known representation: $\langle r, s: r^{n}=s^{2}=1, srs^{-1}=r^{-1}\rangle$. We investigate the values of $n$ such that $\Gamma(D_{n})$ belongs to either of the classes considered in this paper. Based on what we've studied about $\Gamma(D_{n})$, we are able to conclude that $\Gamma(D_{n})$ satisfies following.\\
(i) a triangle-free graph if and only if $n$ is a power of an odd prime.\\
(ii) a claw-free graph if and only if $n=4$.\\
(iii) a cograph if and only if $n$ is a power of an odd prime.\\
(iv) a chordal graph if and only if either $n=4$ or $n$ is a power of an odd prime.\medskip

Let $Q_{2^n}=\langle r, s: r^{2^{n-1}}=1, r^{2^{n-2}}=s^{2}, srs^{-1}=r^{-1}\rangle$. We find the values of $n$ such that $\Gamma(Q_{2^n})$ lies in one of the classes considered in this work (given later). We are able to make the conclustion: $\Gamma(Q_{2^n})$ is a triangle-free graph, cograph and a chordal graph if and only if $n$ is a power of some primes. There is no $n$ for which $\Gamma(Q_{2^n})$ is either a cluster graph or a claw-free graph. In this context, we first prove the following proposition. Here we use the notation '$(a,b)=c$' to denote the gcd of $a$ and $b$ equal to $c$.
\begin{proposition}
Let $Q_{2^n}=\{r, s: r^{2^{n-1}}=1, r^{2^{n-2}}=s^{2}, srs^{-1}=r^{-1}\}$. The non-trivial subgroups of $Q_{2^n}$ are either of Type-I: $\langle r^{d}\rangle$, where $d|2^{n-1}$; or of Type-II: $\langle r^{d}, r^{i}s \rangle$, where $d|2^{n-1}, 0\leq i \leq 2^{n-2}$. Then we have the following:\\
a) a vertex of Type-I say $\langle r^{d_1} \rangle$ is adjacent to a vertex of Type-II say $\langle r^{d_2}, r^{i}s \rangle$ if and only if $(d_{1}, d_{2})=1$;\\
b) any two vertices of Type-II say $\langle r^{d_1}, r^{i}s \rangle$ and $\langle r^{d_2}, r^{j}s \rangle$ are adjacent if and only if either of the following holds:\\
i) $(d_1, d_2)=1$;\\
ii) $(d_1, d_2)=2$ and $(i-j)$ is odd.
\end{proposition}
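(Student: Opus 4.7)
The plan is to translate the adjacency condition in $\Gamma(Q_{2^{n}})$, namely $H_{1}H_{2}=Q_{2^{n}}$, into the order identity $|H_{1}H_{2}|=|H_{1}|\cdot|H_{2}|/|H_{1}\cap H_{2}|=2^{n}$, and then compute $|H_{1}\cap H_{2}|$ in each of the two cases. First I would record the orders: a Type-I subgroup $\langle r^{d}\rangle$ has order $2^{n-1}/d$, while a Type-II subgroup $\langle r^{d},r^{i}s\rangle$ decomposes as $\langle r^{d}\rangle \cup r^{i}s\langle r^{d}\rangle$ (closure uses $(r^{i}s)^{2}=r^{2^{n-2}}$, which must lie in $\langle r^{d}\rangle$ and hence forces $d\mid 2^{n-2}$) and so has order $2^{n}/d$.

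For part (a), since $H_{1}=\langle r^{d_{1}}\rangle\subseteq\langle r\rangle$, the intersection collapses to $\langle r^{d_{1}}\rangle\cap\langle r^{d_{2}}\rangle=\langle r^{\mathrm{lcm}(d_{1},d_{2})}\rangle$, of order $2^{n-1}/\mathrm{lcm}(d_{1},d_{2})$. A direct substitution gives
\[
|H_{1}H_{2}|=\frac{(2^{n-1}/d_{1})(2^{n}/d_{2})}{2^{n-1}/\mathrm{lcm}(d_{1},d_{2})}=\frac{2^{n}}{(d_{1},d_{2})},
\]
so the adjacency $|H_{1}H_{2}|=2^{n}$ is equivalent to $(d_{1},d_{2})=1$.

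For part (b), $H_{1}\cap H_{2}$ has an ``$r$-piece'' and an ``$rs$-piece''. The $r$-piece is again $\langle r^{\mathrm{lcm}(d_{1},d_{2})}\rangle$. The $rs$-piece is the intersection of the two cosets $r^{i}\langle r^{d_{1}}\rangle$ and $r^{j}\langle r^{d_{2}}\rangle$ inside $\langle r\rangle$; these meet iff $r^{i-j}\in\langle r^{d_{1}},r^{d_{2}}\rangle=\langle r^{(d_{1},d_{2})}\rangle$, equivalently iff $(d_{1},d_{2})\mid(i-j)$, and in that event the intersection has the same size as the $r$-piece. This gives two subcases. If $(d_{1},d_{2})\nmid(i-j)$ then $|H_{1}\cap H_{2}|=2^{n-1}/\mathrm{lcm}(d_{1},d_{2})$, so $|H_{1}H_{2}|=2^{n+1}/(d_{1},d_{2})$, which equals $2^{n}$ precisely when $(d_{1},d_{2})=2$ (and this forces $(i-j)$ odd). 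If $(d_{1},d_{2})\mid(i-j)$ then $|H_{1}\cap H_{2}|=2^{n}/\mathrm{lcm}(d_{1},d_{2})$, so $|H_{1}H_{2}|=2^{n}/(d_{1},d_{2})$, which equals $2^{n}$ only when $(d_{1},d_{2})=1$. Merging the two subcases yields exactly the disjunction (i) or (ii) of the statement.

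The one delicate step is the $rs$-piece in part (b): two Type-II subgroups may contain the full ``$s$-half'' of $Q_{2^{n}}$ in their product without sharing a single element of the form $r^{\ast}s$, and recognising when this happens---exactly when the affine cosets $i+d_{1}\mathbb{Z}$ and $j+d_{2}\mathbb{Z}$ in $\mathbb{Z}/2^{n-1}\mathbb{Z}$ are disjoint---is what drives the case split. Everything else is routine order arithmetic; I would also note that for $n\ge 3$ the exponent $2^{n-2}$ arising from $s^{2}$ is even and so does not perturb the parity condition in~(ii).
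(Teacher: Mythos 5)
Your proof is correct, and it takes a genuinely different route from the paper. You reduce adjacency $H_1H_2=Q_{2^n}$ to the counting identity $|H_1H_2|=|H_1|\,|H_2|/|H_1\cap H_2|=2^n$ and then compute the intersection explicitly: for (a) it is $\langle r^{\mathrm{lcm}(d_1,d_2)}\rangle$, and for (b) you add the ``$s$-piece'' $\{r^m s : m\equiv i \ (\mathrm{mod}\ d_1),\ m\equiv j\ (\mathrm{mod}\ d_2)\}$, which is nonempty exactly when $(d_1,d_2)\mid(i-j)$ and then has the same cardinality as the $r$-piece. The arithmetic $d_1d_2=(d_1,d_2)\,\mathrm{lcm}(d_1,d_2)$ then yields $|H_1H_2|=2^n/(d_1,d_2)$ or $2^{n+1}/(d_1,d_2)$ in the two subcases, giving precisely the stated disjunction. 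The paper instead argues elementwise: it writes $r$ and $r^2$ as words in the generators of $H$ and $K$, extracts congruences mod $2^{n-1}$, and deduces $(d_1,d_2)\in\{1,2\}$ with the parity condition via a Bézout-type argument. The paper's route really only establishes necessity in detail (the sufficiency direction is asserted rather than checked), whereas your product-formula computation is an equivalence at every step and so handles both directions at once; it also makes transparent why the coset condition $(d_1,d_2)\mid(i-j)$ is the decisive dichotomy. Your normalization remark that Type-II forces $d\mid 2^{n-2}$ (since $(r^is)^2=r^{2^{n-2}}$ must lie in $\langle r^d\rangle$) is a point the paper glosses over and is needed for the order count $|\langle r^d,r^is\rangle|=2^n/d$ to be right.
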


\begin{proof}
a) Clearly, any two vertices of Type-I are non-adjacent. So, let $H=\langle r^{d_1}\rangle$ and $K=\langle r^{d_2}, r^{i}s \rangle$ such that $H \sim K$. Now, $r \in HK$ gives there exist two positive integers say $m, n$ for which $md_{1}+nd_{2}=1$. This implies $(d_1, d_2)=1$. Similarly one can obtain $r\in HK$ when $(d_1, d_2)=1$. \\
b) Suppose $H=\langle r^{d_1}, r^{i}s \rangle$ and $K=\langle r^{d_2}, r^{j}s \rangle$ such that $H\sim K$. Let us assume that $d=(d_{1}, d_{2})$. Then we get $\langle r^{d}\rangle \subset HK$, where $d$ is the least power of $r$ such that $r^d$ can be expressed as product of $r^{d_1}$ and $r^{d_2}$. We claim that $d\in \{1, 2\}$. Otherwise, since $r, r^{2}$ are expressible as product of the powers of $r^{d_1}, r^{d_2}, r^{i}s, r^{j}s$. \\
Let $r=(r^{d_1})^{x_1}(r^{i}s)^{x_2}(r^{d_2})^{x_3}(r^{j}s)^{x_4}$. Since the elements of the form $r^{i}s, r^{j}s$ are of order $4$; so $x_{2}, x_{4}\in \{1, 2, 3\}$. Therefore we get the following equations:\\
1) $d_{1}x_{1}+d_{2}x_{3}\equiv 1$ (mod $2^{n-1}$), if $x_{2}=x_{4}=2$;\\
2) $d_{1}x_{1}+d_{2}x_{3}+(i-j)+2^{n-2}\equiv 1$ (mod $2^{n-1}$), if $x_{2}=x_{4}=1$ or $3$;\\
3) $d_{1}x_{1}+d_{2}x_{3}+(i-j)\equiv 1$ (mod $2^{n-1}$), if $x_{2}=1, x_{4}=3$ or $x_{2}=3, x_{4}=1$.\\
Similarly if $r^{2}=(r^{d_1})^{y_1}(r^{i}s)^{y_2}(r^{d_2})^{y_3}(r^{j}s)^{y_4}$ then we obtain the following equations:\\
4) $d_{1}y_{1}+d_{2}y_{3}\equiv 2$ (mod $2^{n-1}$), if $y_{2}=y_{4}=2$;\\
5) $d_{1}y_{1}+d_{2}y_{3}+(i-j)+2^{n-2}\equiv 2$ (mod $2^{n-1}$), if $y_{2}=y_{4}=1$ or $3$;\\
6) $d_{1}y_{1}+d_{2}y_{3}+(i-j)\equiv 2$ (mod $2^{n-1}$), if $y_{2}=1, y_{4}=3$ or $y_{2}=3, y_{4}=1$.\\
By (4)-(1) we have $d_{1}u+d_{2}v\equiv 1$ (mod $2^{n-1}$) for two positive integers $u, v$. This implies that $d$ divides $1$ as $d$ divides $d_{1}u+d_{2}v$. This leads to a contradiction that $d=(d_{1}, d_{2})$.\\
Let $d=2$ and $(i-j)$ is an even number. Then as the L. H. S of equation (2) is an even number so we get a contradiction to equation (2). \\
Thus either $(d_{1}, d_{2})=1$ or $(d_{1}, d_{2})=2$ and $(i-j)$ is odd.
\end{proof}

 %So, there exist positive integers $x, y, z, w$ such that $d_{1}x+d_{2}y+(i-j)\equiv 1 ( mod~ 2^{n})$ and $d_{1}z+d_{2}w+(i-j)\equiv 2 ( mod ~2^{n})$. Substracting we have $d_{1}u+d_{2}v\equiv 1(mod~ 2^{n})$, which implies $d|1$ as $d|(d_{1}u+d_{2}v)$. This leads to a contradiction that $d$ is the gcd of $d_{1}, d_{2}$. \\
%If $d=2$ and $(i-j)$ is an even number then the quantity $d_{1}x+d_{2}y+(i-j)$ is even. But, then we get a contradiction that $2^{n}|(d_{1}x+d_{2}y+(i-j)-1)$ as $(d_{1}x+d_{2}y+(i-j)-1)$ is odd. On the other hand, if $d=2$ and $(i-j)$ is an odd number then we get similar type of contradiction from the another equation. Thus, $d=1$. \\
%Conversely let $d=1$. Then for any integer $k$, we have $r^{k}, r^{k}s \in HK$ since $k$ can be represented as the linear combination of $d_{1}, d_{2}$. 
We use the notations $|G|$, $|s|$ and $\langle a \rangle$ to mean the order of a group $G$ and an element $s$ of a group and a cyclic group generated by the element $a$ resp. The symbol $G\times H$ indicates the direct product of two groups $G$, $H$. The notations $K_{n}, P_{n}, C_{n}, K_{1, 3}, 2K_{2}$ stand for a complete graph on $n$ vertices, a path on $n$-vertices, a cycle of length $n$, a claw and $2$ copies of $K_{2}$ (a complete graph with $2$ vertices) respectively. Apart from the $n$-length cycle, we also use the same notation $C_{n}$ to denote a cyclic group of order $n$. From the context it will be clear which we intend to mean actually. We now think back to a well-known theorem relating to line graphs.
\begin{theorem}
\label{line}
\cite{Godsil}
A graph $G$ is the line graph of some graph if and only if if $G$ does not have any one of the following $9$ Beineke graphs as its induced subgraph.
$$\begin{array}{ccccccccccccc}
   \includegraphics[scale=0.75]{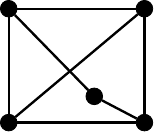}
   &&&\includegraphics[scale=0.75]{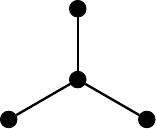}
   &&& \includegraphics[scale=0.75]{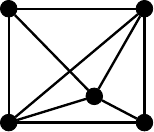} 
  &&&\includegraphics[scale=0.75]{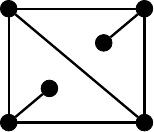}   
  &&& \includegraphics[scale=0.75]{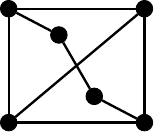}\\
     \Gamma_1&&&\Gamma_2&&&\Gamma_3&&& \Gamma_4&&&\Gamma_5\\\\
  \includegraphics[scale=0.75]{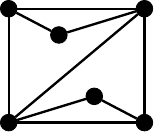}
  &&&\includegraphics[scale=0.75]{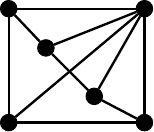}
  &&&\includegraphics[scale=0.75]{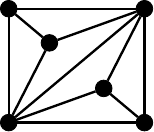}
  &&&\includegraphics[scale=0.75]{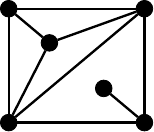}\\
   \Gamma_6&&&\Gamma_7&&&\Gamma_8&&&\Gamma_9\\
\end{array}$$
\end{theorem}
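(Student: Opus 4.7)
The plan is to prove Beineke's characterization in the standard two directions. For the necessity direction, I would verify one by one that each of the nine graphs $\Gamma_1,\ldots,\Gamma_9$ fails to be a line graph. The model case is $\Gamma_1=K_{1,3}$: in any line graph $L(H)$, the neighbours of the vertex $L(e)$ corresponding to an edge $e=uw$ of $H$ split into two cliques (edges sharing $u$ and edges sharing $w$), so three pairwise non-adjacent neighbours are impossible. The remaining $\Gamma_i$ are dispatched by similar local obstructions, chiefly that the set of edges incident to a common vertex of $H$ must form a clique in $L(H)$, and that a triangle in $L(H)$ comes either from three edges sharing a vertex or from a triangle of $H$; the two possibilities interact rigidly and each $\Gamma_i$ encodes a forbidden interaction.

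For sufficiency, the cleanest route is through Krausz's theorem: $G$ is a line graph iff its edge set can be partitioned into cliques so that every vertex of $G$ lies in at most two of these cliques. Given such a partition, the preimage graph $H$ has the cliques as its vertices, joined whenever they share a vertex of $G$, so the whole task reduces to producing a Krausz partition from $\{\Gamma_1,\ldots,\Gamma_9\}$-freeness of $G$.

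I would construct the partition locally. Around each vertex $v$, the absence of $\Gamma_1$ forces the neighbourhood $N(v)$ to have independence number at most two, so its complement is triangle-free and $N(v)$ decomposes into at most two cliques, which I will call the \emph{left} and \emph{right} sides at $v$. This gives a candidate local partition of the edges at $v$ into (at most) two pieces. The remaining obstructions $\Gamma_2,\ldots,\Gamma_9$ are then used to show that these local choices are globally consistent: whenever $u$ and $v$ are adjacent, the side at $u$ containing $v$ is the same as the side at $v$ containing $u$, up to a consistent choice along each maximal clique.

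The main obstacle is precisely this gluing step. The delicate point is maximal cliques of $G$ of size at least four, which must correspond to a vertex of $H$ of that degree rather than to a triangle of $H$; the obstructions $\Gamma_4,\ldots,\Gamma_9$ are exactly the configurations that could force two incompatible gluings, so ruling them out amounts to a careful, but combinatorial, case analysis. Once consistency is established on every maximal clique, the union of these cliques (together with the edges outside them, each treated as a singleton clique) is the required Krausz partition, and Krausz's theorem produces the graph $H$ with $G=L(H)$. I would cite \cite{Godsil} for the full bookkeeping, since the argument, while routine in spirit, requires the explicit enumeration of configurations that match each $\Gamma_i$.
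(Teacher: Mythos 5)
The paper does not prove this statement at all: it is Beineke's classical characterization of line graphs, quoted verbatim and attributed to \cite{Godsil}, so there is no proof in the paper to compare yours against. Your outline follows the standard route (necessity by checking each $\Gamma_i$ individually, sufficiency via Krausz's clique-partition criterion), which is indeed how the result is proved in the literature. However, as written it contains one step that actually fails and one step that is not carried out. The failing step is the claim that $\Gamma_1$-freeness alone forces each neighbourhood $N(v)$ to decompose into at most two cliques: independence number at most two only says the complement of $N(v)$ is triangle-free, and a triangle-free graph need not be bipartite, so the complement need not be $2$-colourable. Concretely, $N(v)\cong C_5$ has independence number two but cannot be covered by two cliques; this is precisely why the $5$-wheel appears among the nine Beineke graphs, and your argument attributes to $\Gamma_1$ an exclusion that in fact requires other members of the list.

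The second issue is that the heart of the sufficiency direction --- showing that the local clique decompositions can be glued consistently into a global Krausz partition using only $\{\Gamma_2,\ldots,\Gamma_9\}$-freeness --- is acknowledged as ``the main obstacle'' and then deferred to the reference. That case analysis \emph{is} the theorem; without it the proposal is a correct table of contents for a proof rather than a proof. Since the paper itself treats the statement as a cited black box, the honest options are either to do the same (cite \cite{Godsil} and omit the argument) or to carry out the full enumeration, including the corrected local structure lemma about neighbourhoods and the odd/even triangle analysis that makes the gluing work.
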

%%% NOTE %%%
%\todo[inline]{Mention the uniqueness and novelity of this work over existing results in literature. \\ KEEP OTHER WRITINGS UNALTERED AND DON'T REMOVE THIS BOX.}
%%% NOTE %%%
The rest of the paper is organized as follows. In section $2$, we provide complete characterizations of a finite nilpotent group $G$ for $\Gamma(G)$ to be cluster graph. We prove that $\Gamma(D_n)$ and $\Gamma(Q_{2^n})$ never produce a cluster graph. In section $3$, we identify all finite nilpotent groups, Dihedral groups $D_n$ and Dicyclic groups $Q_{2^n}$ for which their co-maximal subgroup graphs are triangle-free. Section $4$ describes all the nilpotent groups, Dihedral groups $D_n$ and Dicyclic groups $Q_{2^n}$ for which their co-maximal subgroup graphs are claw-free. In section $5$, we classify all finite nilpotent groups, $p$-groups, Dihedral groups $D_n$ and Dicyclic groups $Q_{2^n}$ for which their co-maximal subgroup graphs are cographs. In section $6$, we make the full characterizations of finite nilpotent groups, finite abelian groups, Dihedral groups $D_n$ and Dicyclic groups $Q_{2^n}$ for which their co-maximal subgroup graphs are chordal. Lastly in section $7$, we classify all finite nilpotent groups $G_1$ and all finite abelian groups $G_2$ such that $\Gamma(G_1)$ is a threshold graph and $\Gamma(G_2)$ is a split graph.

%%%%%%%%%%%%%%%%%%%%%%%%%%%%%%%%%%%%%%%%%%%%%%%%%%%%%%

\section{Cluster Graph}
A graph is called a cluster graph if it does not contain $P_{3}$ ( a path on $3$-vertices) as its induced subgraph.\\
In general, a cluster graph is formed from the disjoint union of complete graphs. It is the complement of the complete multipartite graphs. In the cluster graph, adjacency is an equivaence relation, and their connected components are the equivalence classes for this relation. The cluster graph is a subclass of a cograph and a claw-free graph.
\begin{theorem}
Let $G$ be any finite nilpotent group. Then $\Gamma(G)$ is a cluster graph if and only if $G$ is any one of the following  groups:\\
a) a cyclic group of prime power order;\\
b) $C_{p}\times C_{p}$ ($p$ is a prime);\\
c) $Q_{8}$;\\
d)$G$ is a non-cyclic $p$-groups such that all maximal subgroups of $G$ is cyclic;
e) $C_{pq}$, where $p, q$ are two distinct primes.
\end{theorem}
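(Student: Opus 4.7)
The plan is to use the decomposition $G=P_1\times\cdots\times P_k$ into Sylow $p$-subgroups and divide into cases on $k$ and on the structure of each $P_i$. In every case we either exhibit an induced $P_3$ in $\Gamma(G)$ (so $\Gamma(G)$ is not a cluster graph) or verify directly that $\Gamma(G)$ is a disjoint union of complete subgraphs.

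For $k\geq 3$, the triple $A=P_1$, $B=P_2P_3\cdots P_k$, $C=P_1P_2$ consists of pairwise distinct proper non-trivial Hall subgroups of $G$ with $AB=G$, $BC=G$, but $AC=P_1P_2\subsetneq G$, which is an induced path $A-B-C$. For $k=2$, write $G=P\times Q$. If $|P|=p$ and $|Q|=q$ are both primes, then $G\cong C_{pq}$ and $\Gamma(G)\cong K_2$, a cluster graph (case (e)). Otherwise, up to relabelling $|P|\geq p^2$; choosing a maximal subgroup $M$ of $P$, the triple $(Q,\,P,\,M\times Q)$ forms an induced $P_3$ because $QP=G$, $P(M\times Q)=G$, but $Q(M\times Q)=M\times Q\neq G$.

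For $k=1$, so that $G$ is a $p$-group: if $G$ is cyclic then its subgroup lattice is a chain, $\Gamma(G)$ is edgeless and we are in case (a). If $G$ is non-cyclic and possesses some non-cyclic maximal subgroup $M'$, pick any other maximal subgroup $M$ and any element $c\in M'\setminus M$; since $M'$ is non-cyclic, $\langle c\rangle\subsetneq M'$, and $M'-M-\langle c\rangle$ is an induced $P_3$ (using $MM'=G$ since $M\neq M'$ are both maximal; $M\langle c\rangle=G$ by the maximality of $M$ and $c\notin M$; and $M'\langle c\rangle=M'\neq G$). In the remaining subcase every maximal subgroup of $G$ is cyclic. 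Invoking the classical classification of $p$-groups possessing a cyclic subgroup of index $p$ — namely $C_{p^n}$, $M_{p^n}$, $D_{2^n}$, $Q_{2^n}$, $SD_{2^n}$ — and inspecting the maximal subgroups of each candidate, one is forced into $G\cong C_p\times C_p$ or $G\cong Q_8$, which cover cases (b), (c), (d). For these two groups one computes directly that $\Gamma(C_p\times C_p)=K_{p+1}$ and $\Gamma(Q_8)=K_3\cup K_1$, both cluster graphs.

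The main technical obstacle is the subcase of a non-cyclic $p$-group with a non-cyclic maximal subgroup: one must verify that the candidate element $c\in M'\setminus M$ produces a cyclic subgroup strictly smaller than $M'$ (this is exactly where the non-cyclicity of $M'$ is essential, since for cyclic $M'$ any $c\in M'\setminus M$ would satisfy $\langle c\rangle=M'$, the $p$-th powers of generators landing in $\Phi(G)\subseteq M$), and that $M'$, $M$, $\langle c\rangle$ are three distinct non-trivial proper subgroups with the claimed adjacencies. The all-maximal-cyclic classification and the explicit edge computations for $C_p\times C_p$ and $Q_8$ are routine but must be carried out in detail.
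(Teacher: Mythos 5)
Your proposal is correct and follows the same skeleton as the paper's proof: eliminate $|\pi(G)|\geq 3$ with a Hall-subgroup path, reduce $|\pi(G)|=2$ to $C_{pq}$ by producing $Q\sim P\sim M\times Q$, and in the $p$-group case split on whether some maximal subgroup is non-cyclic (your path $M'\sim M\sim\langle c\rangle$ is the paper's path $\langle x\rangle\sim M\sim N$, and your observation that non-cyclicity of $M'$ is exactly what forces $\langle c\rangle\subsetneq M'$ is the right key point, stated more cleanly than in the paper). The one genuine divergence is your treatment of class (d): the paper keeps ``non-cyclic $p$-group with all maximal subgroups cyclic'' as a standing class and proves the converse directly by a lattice argument (two cyclic maximal subgroups intersect in index $p$, so every element outside the intersection generates the whole maximal subgroup, blocking any $P_3$), whereas you collapse the class to $\{C_p\times C_p,\,Q_8\}$ via the classification of $p$-groups with a cyclic maximal subgroup and then compute $\Gamma(C_p\times C_p)=K_{p+1}$ and $\Gamma(Q_8)=K_3\cup K_1$ explicitly. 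Your route has the virtue of making explicit that item (d) of the statement adds nothing beyond (b) and (c) — something the paper leaves implicit and slightly muddled — at the cost of importing a nontrivial classification theorem; note only that your list of that classification omits $C_{p^{n-1}}\times C_p$, which is harmless here since for $n\geq 3$ it has the non-cyclic maximal subgroup $C_{p^{n-2}}\times C_p$ and is already excluded by your earlier case, but the list should be stated completely if you invoke it.
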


\begin{proof}
Firstly, let $\Gamma(G)$ be a cluster graph. Let $|\pi(G)|\geq 2$ and $p, q, r$ be $3$ distinct prime divisors of $|G|$. Then $\Gamma(G)$ contains a $3$-vertex path $P\sim M\sim N$, where $P$ is the Sylow $p$-subgroup of $G$ and $M, N$ are two distinct maximal subgroups of $G$ such that $[G:M]=p, [G:N]=q$. Hence, we conclude that $|\pi(G)|\in \{1, 2\}$. Now two cases arise.\\
\textbf{Case 1.} $|\pi(G)|=1$\\
Let $|G|=p^{a}$ ($a \geq 1$). If $a=1$ then $G\cong C_{p}$. Clearly $\Gamma(C_{p})$ is a cluster graph. Therefore $a \geq 2$.\\
Now if $G$ is a cyclic group then the proof is obvious.\\
Suppose $G$ is non-cyclic group such that $a \geq 2$. If $G$ has unique minimal subgroup then $G$ is the dicyclic group $Q_{2^n}$ ($n \geq 3$). For $n>3$ there exist two non-trivial subgroups (say, $A, B$) of $Q_{2^n}$ of order $4$ and a maximal cyclic subgroup say $H$ of order $2^{n-1}$. Then $\Gamma(Q_{2^n})$ contains a path $A\sim H\sim B$. Therefore, $G$ must be $Q_{8}$.\\
On the other hand, if $G$ has order $p^{2}$ then any non-trivial subgroup is maximal. Hence, any $3$ of these subgroups form a triangle. So, $\Gamma(G)$ is a cluster graph in this case.\\
Let $G$ be a group such that $G$ has at least one maximal subgroups say $ N$ which is non-cyclic. Then $N$ is at least $2$-generated. Since, $G$ is not cyclic so there exists another maximal subgroup say $M$ of $G$. Now, $N$ is at least $2$-generated so we obtain an element say $x \in N\setminus M$ of order $p$ such that $\langle x \rangle \subset N$. This implies $\Gamma(G)$ contains a path $\langle x \rangle \sim M \sim N$. \\
So, $G$ must be a non-cyclic $p$-group in which every maximal subgroups are cyclic.\\
Therefore, if $|\pi(G)|=1$ then $\Gamma(G)$ is a cluster graph if $G$ is either a cyclic $p$-group or $C_{p}\times C_{p}$ or $Q_{8}$.\\
\textbf{Case 2.} $|\pi(G)|=2$\\
Let $p, q||G|$ be two distinct primes and $G=P\times Q$, where $P, Q$ are Sylow $p$- and $q$-subgroup of $G$ such that $|P|=p^{a}, |Q|=q^{b}$ ($a, b \geq 1$). If both $a, b \geq 2$ then $\Gamma(G)$ contains the path $P \sim M\sim N$, where $[G:M]=p, [G:N]=q$. Therefore, any one of $a, b$ must be equal to $1$. Let $|G|=p^{a}q$ ($a \geq 1$).\\
If $a=1$ then $G\cong C_{pq}$. \\
Let $a \geq 2$ and $M$ be a maximal subgroup of $G$ with $[G:M]=p$. Then there exists a subgroup say $H\subset G\setminus M$ whose order is $p$. So $\Gamma(G)$ contains the path $P\sim M \sim H$. Thus in this case we obtain $G\cong C_{pq}$.\\
Converse part:\\
Let $G$ be a $p$-group in which every maximal cyclic subgroups are cyclic. Suppose, $M, N$ are two maximal subgroups of $G$. Since, both $M, N$ are cyclic and $|M\cap N|=p^{a-2}$, where $|G|=p^{a}$. Hence, there are $(p^{a-1}-p^{a-2})$ are in $m$ as well as $N$. Clearly, these elements of $M$ are the generators of $M$ and for $N$ these elements generates $N$. Thus, there does not exist any path on $3$ vertices because if such path exists say $A\sim M\sim N$ then $A$ must be satisfies that $A\subset N$ and $A$ is not contained in $M$. But in this case $A=N$. Therefore $\Gamma(G)$ is a cluster graph.\\
For the other groups the proof is obvious.
\end{proof}
\begin{theorem}
$\Gamma(D_{n})$ is never be a cluster graph.
\end{theorem}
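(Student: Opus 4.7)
The plan is to exhibit an induced $P_3$ in $\Gamma(D_n)$ for every $n\ge 3$; since cluster graphs are precisely the $P_3$-free graphs, this settles the claim. The natural choice is to take the rotation subgroup $\langle r\rangle$ of order $n$ as the middle vertex, and two distinct reflection subgroups of order $2$, say $\langle s\rangle$ and $\langle rs\rangle$, as the endpoints.

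First I would confirm that $\langle r\rangle$ is co-maximal with every subgroup of the form $\langle r^{i}s\rangle$. This is a one-line order count: the two subgroups meet trivially, so their product already contains $n\cdot 2=|D_n|$ elements and must equal $D_n$. Next I would verify that $\langle s\rangle$ and $\langle rs\rangle$ are \emph{not} adjacent in $\Gamma(D_n)$. Their product set contains only the elements $1,\,s,\,rs,\,s\cdot rs$, hence at most four elements, which is strictly less than $|D_n|=2n$ whenever $n\ge 3$. Combining these two observations yields the induced path $\langle s\rangle\sim\langle r\rangle\sim\langle rs\rangle$ in $\Gamma(D_n)$.

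The argument is essentially a pair of trivial index computations, so there is no real obstacle; the theorem is in fact easier than its counterparts for triangle-freeness or chordality, because a single universal configuration works for every $n$. The only detail worth recording is that at least two distinct order-$2$ reflection subgroups always exist when $n\ge 3$, which is immediate from the fact that $D_n$ contains the $n$ pairwise distinct reflections $s,rs,\dots,r^{n-1}s$, each generating its own subgroup of order $2$.
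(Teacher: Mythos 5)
Your proof is correct, but it takes a genuinely different and in fact cleaner route than the paper. The paper splits into cases according to the prime factorization of $n$: when $n$ has at least two prime divisors it exhibits the path $\langle r^{p}, s\rangle \sim \langle r^{q}, s \rangle \sim \langle r^{p}, rs\rangle$, and when $n=p^{k}$ it uses $\langle r^{p}, s \rangle \sim \langle r \rangle \sim \langle r^{p}, rs \rangle$. You instead use the single configuration $\langle s\rangle \sim \langle r\rangle \sim \langle rs\rangle$, valid uniformly for all $n\ge 3$: the middle vertex $\langle r\rangle$ meets each reflection subgroup trivially, so the product has $2n$ elements and equals $D_n$, while $\langle s\rangle\langle rs\rangle$ has at most four elements, hence the endpoints are non-adjacent. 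Besides eliminating the case analysis, your choice is more robust: in the paper's prime-power case with $p=2$ the two endpoints $\langle r^{2}, s \rangle$ and $\langle r^{2}, rs \rangle$ are distinct index-$2$ subgroups, so their product is all of $D_n$ and the claimed triple is a triangle rather than an induced $P_3$; your reflection subgroups of order $2$ cannot suffer this defect for $n\ge 3$. The only hypotheses you rely on --- that $\langle s\rangle$ and $\langle rs\rangle$ are distinct non-trivial proper subgroups and hence genuine vertices --- you verify explicitly, so there is no gap.
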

\begin{proof}
Let $r, s$ be two generators of $D_{n}$ such that order of $s$ is $2$ and $p, q$ be two primes dividing $n$. Let $n=pq$ ($p>q$). Then $\Gamma(D_{n})$ contains a path $\langle r^{p}, s\rangle \sim \langle r^{q}, s \rangle \sim \langle r^{p}, rs\rangle$. Similar manner we can prove that if $n$ has two or more distinct prime divisors then $\Gamma(D_{n})$ always has $P_{3}$. This implies that $n$ must be of prime power. Again, if $n=p^{k}$ ($k \geq 1$) then $\Gamma(D_{n})$ contains a path $\langle r^{p}, s \rangle \sim \langle r \rangle \sim \langle r^{p}, rs \rangle$. Therefore there does not exists $n$ for which $\Gamma(D_{n})$ is a cluster graph.
\end{proof}

\begin{theorem}
$\Gamma(Q_{2^n})$ ($n \geq 3$) is never be a cluster graph.
\end{theorem}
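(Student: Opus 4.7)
The plan is to exhibit an explicit induced $P_{3}$ in $\Gamma(Q_{2^n})$ for every relevant $n$, invoking Proposition~1 to check the three adjacencies and one non-adjacency. Since the result contradicts Theorem~2.1(c) at $n=3$ (where $\Gamma(Q_{8})$ is a cluster graph), the substantive content is for $n\geq 4$, and I would treat the argument that way.

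The natural candidate triple is $H=\langle r\rangle$ (Type-I with $d=1$) together with the two Type-II subgroups of order $4$, namely $A=\langle r^{2^{n-2}},s\rangle$ and $B=\langle r^{2^{n-2}},rs\rangle$; these correspond to the common parameter $d=2^{n-2}$ and to the distinguishing indices $i=0$ and $j=1$. Intuitively, $H$ is ``coprime enough'' to each of the Type-II subgroups to fill out all of $Q_{2^n}$, while $A$ and $B$ share too much of the $\langle r\rangle$-direction to multiply to the whole group.

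The verification is then a pure gcd check via Proposition~1. For $H\sim A$ and $H\sim B$, part~(a) of the proposition applies with $(d_{1},d_{2})=(1,2^{n-2})=1$, so both edges are present. For the non-adjacency $A\not\sim B$, part~(b) applies with $d_{1}=d_{2}=2^{n-2}$, giving $(d_{1},d_{2})=2^{n-2}$; for $n\geq 4$ this gcd is at least $4$, so neither condition~(i) ($=1$) nor condition~(ii) ($=2$) can hold, and therefore $A$ and $B$ are non-adjacent. The three vertices $A,H,B$ thus induce a copy of $P_{3}$, which rules out $\Gamma(Q_{2^n})$ being a cluster graph.

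The only delicate point—really more bookkeeping than an obstacle—is the boundary $n=3$, where $2^{n-2}=2$ and the indices $0,1$ satisfy $(i-j)=-1$ odd, so condition~(ii) of Proposition~1(b) is met and $A\sim B$. This is precisely why the triple above does \emph{not} induce a $P_{3}$ in $Q_{8}$, and it is consistent with $Q_{8}$ being the lone dicyclic exception appearing in the cluster-graph classification. I would therefore state the theorem for $n\geq 4$ (or flag the case $n=3$ explicitly) and then present the short verification above.
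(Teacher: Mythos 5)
Your proof is correct for $n\geq 4$ and is, in fact, tighter than the paper's own argument, though it follows the same high-level strategy of exhibiting an induced $P_{3}$. The difference lies in the choice of witnesses. You take the cyclic maximal subgroup $\langle r\rangle$ as the centre and the two order-$4$ subgroups $\langle s\rangle=\langle r^{2^{n-2}},s\rangle$ and $\langle rs\rangle=\langle r^{2^{n-2}},rs\rangle$ as the endpoints; Proposition~1 then gives the two edges from $(1,2^{n-2})=1$ and the non-edge from $(2^{n-2},2^{n-2})=2^{n-2}\geq 4$, uniformly for all $n\geq 4$. The paper instead branches on the prime factorisation of $n$ -- a case split that only makes sense if $Q_{2^n}$ is silently reinterpreted as a dicyclic group of order $4n$ (note the stray ``$Q_{4n}$'' in its proof), in conflict with the presentation fixed in the introduction and in Proposition~1 -- and in its prime-power branch it uses $\langle x^{p},y\rangle$ and $\langle x^{p},xy\rangle$ as the endpoints of the path. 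For $p=2$ these are the two distinct non-cyclic maximal (hence normal, index-$2$) subgroups, whose product is all of $G$, so they are adjacent and the claimed path is not induced; your order-$4$ endpoints avoid exactly this failure. You are also right to flag $n=3$: $\Gamma(Q_{8})$ is a triangle on the three order-$4$ subgroups together with the isolated vertex $\langle r^{2}\rangle$, hence a cluster graph, which is what Theorem~2.1(c) asserts and which contradicts the present theorem as literally stated. Restricting to $n\geq 4$, as you propose, is the correct formulation, and your verification of that case is complete.
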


\begin{proof}
Let $Q_{2^n}$ be generated by $x, y$, where $y$ has order $4$. Suppose, $p>q$ are two distinct primes such that $n=pq$. Then $\Gamma(Q_{2^n})$ contains a $3$-vertex path $\langle x^{p}, y \rangle \sim \langle x^{q} \rangle \sim \langle x^{p}, xy \rangle$. On the other hand, if $n$ is a power of $p$ then $\Gamma(Q_{4n})$ carries the path $\langle x^{p}, y \rangle \sim \langle x \rangle \sim \langle x^{p}, xy \rangle$. This completes the proof of the theorem.
\end{proof}

   \section{Triangle-free}
A triangle-free graph is a graph that does not contain a cycle of length $3$ as its induced subgraph. It is a subclass of a bipartite graph. In this section we explore the finite nilpotent groups $G$ such that $\Gamma(G)$ is triangle-free. Moreover, we conclude that $\Gamma(D_{n})$ is a triangle-free graph if and only if $n=p^k$, where $p$ is an odd prime. But in case of $Q_{2^n}$ we have $\Gamma(Q_{2^n})$ is a triangle-free graph if and only if $n$ is a power of a prime.
\begin{theorem}
Let $G$ be a finite nilpotent group. Then $\Gamma(G)$ is triangle-free if and only if $G$ is either a cyclic $p$-group or $G \cong C_{p^{a}q^{b}}$, where $a, b \geq 1$.
\end{theorem}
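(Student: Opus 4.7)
My plan is to prove the two directions separately.

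For sufficiency, if $G$ is a cyclic $p$-group then its subgroup lattice is a chain, so the product of any two proper subgroups remains a proper subgroup; hence $\Gamma(G)$ has no edges at all. If $G \cong C_{p^a q^b}$ with $a, b \geq 1$, I will parametrize the proper non-trivial subgroups by pairs $(i,j) \in \{0, 1, \ldots, a\} \times \{0, 1, \ldots, b\} \setminus \{(0,0),(a,b)\}$ via $(i,j) \mapsto C_{p^i q^j}$, and verify (from the standard formula $|HK| = |H||K|/|H \cap K|$ for subgroups of a cyclic group) that two such subgroups are adjacent in $\Gamma(G)$ iff $\max(i_1, i_2) = a$ and $\max(j_1, j_2) = b$. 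If three distinct vertices formed a triangle, then at least two of them would satisfy $i = a$ (else some pair violates the first max-condition) and at least two would satisfy $j = b$; among only three vertices, two such subsets of size $\geq 2$ must intersect, and the common vertex would then have both $i = a$ and $j = b$, forcing it to equal $G$, a contradiction.

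For necessity, suppose $\Gamma(G)$ is triangle-free and write $G = P_1 \times \cdots \times P_k$ via nilpotence, where $P_i$ is the Sylow $p_i$-subgroup. I split on $k = |\pi(G)|$. If $k \geq 3$, the three Hall subgroups $H_\ell = \prod_{i \neq \ell} P_i$ for $\ell = 1, 2, 3$ are proper and non-trivial, and any product $H_\ell H_m$ already contains every Sylow factor $P_i$, so $H_\ell H_m = G$; these three vertices form a triangle, a contradiction. If $k = 1$ and $G$ is a non-cyclic $p$-group, then $G/\Phi(G) \cong (C_p)^d$ with $d \geq 2$, giving at least $(p^d-1)/(p-1) \geq p + 1 \geq 3$ maximal subgroups; since maximal subgroups of a nilpotent group are normal, any two distinct ones $M, N$ satisfy $M \subsetneq MN \leq G$ and hence $MN = G$ by maximality, so these maximal subgroups form a clique of size at least $3$. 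Finally, if $k = 2$ and $G = P \times Q$ with, say, $P$ non-cyclic, three distinct maximal subgroups $M_1, M_2, M_3$ of $P$ yield three pairwise co-maximal subgroups $M_i \times Q$ of $G$, again a triangle. Hence both Sylow factors must be cyclic and $G \cong C_{p^a q^b}$.

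The most delicate step I anticipate is the triangle-free verification for $C_{p^a q^b}$: although short, the pigeonhole argument must handle arbitrary $a, b \geq 1$ uniformly, and one must first carefully confirm the precise adjacency criterion using the subgroup lattice of a cyclic group. The necessity direction is then a clean case-split, resting on the standard observation that in any nilpotent group distinct maximal subgroups are pairwise co-maximal, together with the existence of at least three maximal subgroups in every non-cyclic $p$-group.
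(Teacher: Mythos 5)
Your proof is correct and follows the same overall strategy as the paper's (case-split on $|\pi(G)|$, exhibit explicit triangles in the non-cyclic and many-prime cases), but several of your choices are cleaner and in one place more robust. For $|\pi(G)|\geq 3$ the paper picks three subgroups one of whose products, as printed, misses the Sylow $p_3$-factor (evidently a typo); your three Hall subgroups $H_\ell=\prod_{i\neq\ell}P_i$ are manifestly pairwise co-maximal and settle the case at once. For a non-cyclic $p$-group the paper invokes Kulakoff's theorem to get three maximal subgroups, whereas you count them via $G/\Phi(G)\cong(C_p)^d$ with $d\geq 2$; both are fine, and your observation that distinct maximal subgroups of a nilpotent group are normal, hence co-maximal, is exactly what makes them a clique. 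For $|\pi(G)|=2$ you replace the paper's construction (a maximal subgroup $H$ of $P_1$ together with a complementary $\langle x\rangle$) by the triangle $M_1\times Q$, $M_2\times Q$, $M_3\times Q$ built from three maximal subgroups of a non-cyclic Sylow factor, which works for the same reason. Finally, the paper dismisses the converse as clear, while you actually prove it: the adjacency criterion $\max(i_1,i_2)=a$ and $\max(j_1,j_2)=b$ for the subgroups $C_{p^iq^j}$ of $C_{p^aq^b}$ is right, and the pigeonhole step forcing a triangle to contain a vertex with $i=a$ and $j=b$ (hence equal to $G$) is a genuine and welcome addition.
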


\begin{proof}
Let $G\cong P_{1}\times P_{2}\times \cdots \times P_{r}$, where $r \geq 2$ and $P_{i}$ are Sylow $p_{i}$-subgroups of order $p_{i}^{\alpha_{i}}$ for $1\leq i \leq r$, such that $\Gamma(G)$ is triangle-free.\\
If $r \geq 3$, then $\Gamma(G)$ contains a triangle with vertices: $H_{1}, H_{2}, H_{3}$ where $|H_{1}|=p_{1}^{\alpha_{1}}p_{2}^{\alpha_{2}}, |H_{2}|=p_{2}^{\alpha_{2}}p_{4}^{\alpha_{4}}\cdots p_{r}^{\alpha_{r}}$, and $|H_{3}|=p_{1}^{\alpha_{1}}p_{3}^{\alpha_{3}}\cdots p_{r}^{\alpha_{r}}$. Thus, $G$ is either  i) a $p$-group or ii) $|G|=p_{1}^{\alpha_{1}}p_{2}^{\alpha_{2}}$, where both $\alpha_{1}, \alpha_{2}\geq 1$.\\
i) First we consider that $G$ is a $p$-group. Clearly, $G$ cannot have more than one maximal subgroup; elsewhere by Kulakoff theorem $G$ contains atleast $3$ maximal subgroups of $G$. They form together a triangle in $\Gamma(G)$. Hence, the maximal subgroup of $G$ is unique, which implies $G$ must be cyclic $p$-group.\\
ii) Next let $|G|$ has exactly two distinct prime divisors.\\
Suppose at least one of $\alpha_{i}>1$ (say, $\alpha_{1}>1$). Let $P_{i}$ ($i=1, 2$) be the Sylow $p_{i}$-subgroups of $G$ resp. We claim that both Sylow subgroups must be cyclic. If possible let $P_{1}$ be non-cyclic. Consider a subgroup $H\subset P_{1}$ of order $p_{1}^{\alpha_{1}-1}$. Then there exists an element say $x \in P_{1}$ of order $p_{1}$ with $\langle x \rangle \cap H=\{e\}$. Set $K=\langle x \rangle$. Then $\Gamma(G)$ contains a triangle $A, B, P_{1}$ such that $|A|=p_{1}^{\alpha_{1}-1}p_{2}^{\alpha_{2}}, |B|=p_{1}p_{2}^{\alpha_{2}}$
with $H \subset A, K\subset B$. Hence, both Sylow subgroups of $G$ must be cyclic and $G\cong C_{p^{a}q^{b}}$, where $a, b \geq 1$.\\
Converse is clear.
\end{proof}
We simply express the following conclusion as a result of this theorem.
\begin{corollary}
    If $G$ is a finite nilpotent group, then $\Gamma(G)$ is bipartite if and only if $G$ is either a cyclic $p$-group or $G \cong C_{p^{a}q^{b}}$, where $a, b \geq 1$.
\end{corollary}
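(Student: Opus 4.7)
The plan is to derive the corollary as an immediate consequence of the preceding theorem. Since every bipartite graph is triangle-free (it contains no odd cycle, in particular no $C_3$), if $\Gamma(G)$ is bipartite then the theorem forces $G$ to be a cyclic $p$-group or $C_{p^a q^b}$ with $a, b \geq 1$. This disposes of the forward direction in one line, without repeating any case analysis.

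For the converse I would verify bipartiteness explicitly in each of the two families. When $G = C_{p^n}$ is a cyclic $p$-group, the lattice of subgroups is a chain $\{e\} \subset C_p \subset C_{p^2} \subset \cdots \subset C_{p^n}$, so for any two distinct proper non-trivial subgroups $H_1 \subsetneq H_2$ one has $H_1 H_2 = H_2 \neq G$. Thus $\Gamma(C_{p^n})$ has no edges at all and is trivially bipartite.

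The substantive case is $G \cong C_{p^a q^b}$. Here every subgroup decomposes as $H_{i,j} \cong C_{p^i} \times C_{q^j}$ for $0 \leq i \leq a$ and $0 \leq j \leq b$, and the proper non-trivial subgroups correspond exactly to the pairs $(i,j) \notin \{(0,0), (a,b)\}$. The co-maximality condition $H_{i_1, j_1} H_{i_2, j_2} = G$ is easily seen to reduce to $\max(i_1, i_2) = a$ and $\max(j_1, j_2) = b$. I would then split the vertex set as $A = \{(a, j) : 0 \leq j < b\}$ and $B = \{(i, j) : 0 \leq i < a,\ (i, j) \neq (0, 0)\}$. An edge inside $A$ would require $\max(j_1, j_2) = b$ while both coordinates are strictly smaller than $b$, and an edge inside $B$ would require $\max(i_1, i_2) = a$ while both coordinates are strictly smaller than $a$; both are impossible. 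Hence every edge runs between $A$ and $B$, exhibiting the desired bipartition (with the isolated vertices $(i,j)$, $i<a$, $j<b$, thrown into $B$).

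Both verifications are routine and no real obstacle arises beyond writing the bipartition of $\Gamma(C_{p^a q^b})$ cleanly; no results beyond the preceding theorem are required.
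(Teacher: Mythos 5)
Your proposal is correct, and in fact it supplies a verification that the paper omits entirely: the corollary is stated with no proof beyond the phrase ``as a result of this theorem,'' and the paper's own framing (the remark in Section~3 that a triangle-free graph ``is a subclass of a bipartite graph'') has the containment backwards --- bipartite graphs are triangle-free, not conversely, as $C_5$ shows. Consequently the forward direction really is immediate from the triangle-free theorem exactly as you argue, but the converse is \emph{not} automatic and genuinely requires the explicit bipartition you construct. Your treatment of that converse is sound: for a cyclic $p$-group the subgroup lattice is a chain, so $\Gamma(G)$ is edgeless; for $C_{p^a q^b}$ the adjacency condition $\max(i_1,i_2)=a$ and $\max(j_1,j_2)=b$ is correct (the product of subgroups of a cyclic group is the subgroup of order the lcm), and the parts $A=\{(a,j): j<b\}$ and $B=\{(i,j): i<a\}$ are each independent because $(a,b)$ is not a vertex, so no two vertices of $A$ can achieve $\max(j_1,j_2)=b$ and no two vertices of $B$ can achieve $\max(i_1,i_2)=a$. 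The only stylistic quibble is that you could note the bipartition also shows $\Gamma(C_{p^aq^b})$ is in fact complete bipartite between the non-isolated vertices, but nothing in your argument depends on that. In short: same intended route as the paper, but you close a real gap the paper leaves open.
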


\begin{theorem}
$\Gamma(D_{n})$ is a triangle-free graph if and only if $n$ is a power of an odd prime.
\end{theorem}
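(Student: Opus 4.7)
The plan is to classify all adjacencies in $\Gamma(D_n)$ using the two types of subgroups---cyclic $\langle r^d\rangle$ ($d\mid n$) and dihedral $\langle r^d, r^m s\rangle$ ($d\mid n$, $0\leq m<d$)---and then handle each implication. I would use the basic criterion that $H\sim K$ iff $|H|\,|K|/|H\cap K|=2n$, so everything reduces to counting intersections.

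For the backward direction $(\Leftarrow)$, assume $n=p^k$ with $p$ odd. I would show that the only edges of $\Gamma(D_n)$ join $\langle r\rangle$ to a proper dihedral subgroup; then $\Gamma(D_n)$ is a star with center $\langle r\rangle$ together with isolated vertices (the cyclic proper subgroups of $\langle r\rangle$), and is therefore triangle-free. A case split on subgroup types yields: (i) two cyclic subgroups are never adjacent since their product lies inside $\langle r\rangle$; (ii) for a cyclic $\langle r^{p^i}\rangle$ and a proper dihedral $\langle r^{p^j}, r^m s\rangle$ (so $j\geq 1$), the computation gives $|HK|=2p^{k-\min(i,j)}$, which equals $2p^k$ iff $\min(i,j)=0$, forcing $i=0$; (iii) for two proper dihedral subgroups $\langle r^{p^i}, r^m s\rangle$ and $\langle r^{p^j}, r^{\ell}s\rangle$, I view the reflection set of each as a coset of $\langle r^{p^i}\rangle$, respectively $\langle r^{p^j}\rangle$, in $\mathbb{Z}/p^k$; these cosets meet iff $p^{\min(i,j)}\mid (m-\ell)$, producing $|HK|=2p^{k-\min(i,j)}$ in that case and $|HK|=4p^{k-\min(i,j)}$ otherwise. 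Since $i,j\geq 1$ and $p$ is odd, neither value equals $2p^k$.

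For the forward direction $(\Rightarrow)$ I argue the contrapositive and exhibit a triangle whenever $n$ is not a power of an odd prime. If $n$ has two distinct prime divisors $p,q$, I would verify directly that $\langle r\rangle,\langle r^p, s\rangle,\langle r^q, s\rangle$ is a triangle: the pairwise intersections are $\langle r^p\rangle$, $\langle r^q\rangle$, and $\langle r^{pq}\rangle\cup s\langle r^{pq}\rangle$ respectively, so each pairwise product has order $2n$. If $n=2^k$ with $k\geq 2$, the three maximal subgroups $\langle r\rangle,\langle r^2, s\rangle,\langle r^2, rs\rangle$ are pairwise co-maximal; the key point is that case (iii) above, which failed for odd $p$, now succeeds at $p=2$ because $4\cdot 2^{k-1}=2\cdot 2^k$. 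The edge case $n=2$ is handled directly: the three order-$2$ subgroups $\langle r\rangle,\langle s\rangle,\langle rs\rangle$ pairwise generate $D_2$.

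The main obstacle will be case (iii) of the backward direction---the two-dihedral computation. The sub-case split on whether $p^{\min(i,j)}\mid(m-\ell)$ requires carefully identifying the reflection sets with cosets in $\mathbb{Z}/p^k$ and intersecting them, and it is precisely the arithmetic identity $4p^{k-\min(i,j)}=2p^k\iff p^{\min(i,j)}=2$ that encodes why odd primes behave differently from $p=2$ and thus explains both directions of the equivalence.
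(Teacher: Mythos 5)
Your proof is correct, and its skeleton matches the paper's: the forward direction uses exactly the same witnesses (the triangle $\langle r\rangle,\langle r^{p},s\rangle,\langle r^{q},s\rangle$ when two primes divide $n$, and the three maximal subgroups $\langle r\rangle,\langle r^{2},s\rangle,\langle r^{2},rs\rangle$ when $n=2^{k}$), and the backward direction rests on the same structural fact, namely that for $n=p^{k}$ with $p$ odd every edge of $\Gamma(D_{n})$ has $\langle r\rangle$ as an endpoint, so the graph is a star plus isolated vertices. Where you genuinely diverge is in how that fact is established. The paper assumes a triangle $A\sim B\sim C$ exists and writes ``without loss of generality, let $A=\langle r^{p},s\rangle$,'' which as stated is not a legitimate reduction (a hypothetical triangle need not contain that particular subgroup, nor even one of the form $\langle r^{p},r^{i}s\rangle$), and then chases cases. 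You instead classify \emph{all} adjacencies via $|HK|=|H||K|/|H\cap K|$ and the coset description of the reflection sets, which both closes that gap and isolates the arithmetic reason for the dichotomy: two proper dihedral subgroups give $|HK|=4p^{k-\min(i,j)}$ in the disjoint-reflection case, and this equals $2p^{k}$ precisely when $p^{\min(i,j)}=2$, i.e.\ only for $p=2$. This is a cleaner and more robust argument than the paper's, at the cost of the intersection bookkeeping in your case (iii); the only point worth adding is a sentence confirming that adjacency in $\Gamma(G)$ means $HK=G$, so that the cardinality criterion $|H||K|/|H\cap K|=|G|$ is indeed equivalent to it.
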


\begin{proof}
Let $\Gamma(D_{n})$ be a triangle-free graph.\\
First suppose, $p, q$ be two primes such that $n=pq$ with $p>q$. Then $\Gamma(D_{n})$ contains a triangle $\langle r^{p}, s \rangle \sim \langle r^{q}, s \rangle \sim \langle r \rangle \sim \langle r^{p}, s \rangle $, where $r, s$ are two generators of $D_{n}$ with order of $s$ is $2$. Similarly, if $n$ has two or more distinct prime divisors we can construct a triangle in $\Gamma(D_{n})$. On the other hand, let $n$ be a power of $2$. Now $\Gamma(D_{n})$ comprises a triangle by the $3$ vertices as $\langle r^{2}, rs \rangle, \langle r^{2}, s \rangle, \langle r \rangle$. Therefore $n=p^{k}$ (where, $p$ is an odd prime).\\
Conversely, let $n$ be a power of an odd prime.\\
If possible let $A, B, C$ be the vertices of a triangle in $\Gamma(D_{n})$. Without loss of generality, let $A=\langle r^{p}, s\rangle$. Then $B$ must be $\langle r \rangle$ as $p$ is odd. So, $C$ has only possibility that $\langle r^{p^m}, s \rangle$ (where $m >1$) or $\langle r^{p^l}, rs \rangle$, where $l \geq 1$. But in that case $A\nsim C$. Hence $\Gamma(D_{n})$ never contains a triangle.
\end{proof}

\begin{theorem}
$\Gamma(Q_{2^n})$, where $n \geq 3$,  does not contain a triangle if and only if $n$ is a prime power.
\end{theorem}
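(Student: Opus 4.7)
The plan is to mirror the template the authors used for $\Gamma(D_n)$ in the previous theorem, replacing the ad hoc dihedral adjacency facts with the systematic description provided by Proposition~1. Both directions proceed by splitting into cases according to subgroup type (Type-I versus Type-II) and by exploiting the gcd conditions in 1(a) and 1(b).

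For the $(\Leftarrow)$ direction, I would suppose $n$ is not a prime power, pick two distinct primes $p,q$ dividing $n$, and exhibit $\langle r^{p},s\rangle$, $\langle r^{q},s\rangle$, and $\langle r\rangle$ as a triangle in $\Gamma(Q_{2^n})$. Proposition~1(a) supplies the two edges $\langle r\rangle \sim \langle r^{p},s\rangle$ and $\langle r\rangle \sim \langle r^{q},s\rangle$ via $(1,p)=(1,q)=1$, and Proposition~1(b)(i) supplies $\langle r^{p},s\rangle \sim \langle r^{q},s\rangle$ via $(p,q)=1$. If $n$ has three or more distinct prime divisors, any two of them produce the same configuration.

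For the $(\Rightarrow)$ direction, I would assume $n=p^k$ and argue by contradiction that no triangle $A\sim B\sim C\sim A$ can exist. Since Proposition~1 forbids any edge between two Type-I vertices, at most one of $A,B,C$ is of Type-I, which leaves two scenarios. Scenario~1 (all three Type-II) is the cleaner case: writing the vertices as $\langle r^{d_i},r^{a_i}s\rangle$ with each $d_i>1$ (otherwise the subgroup equals the whole group $Q_{2^n}$), clause 1(b)(i) is unavailable, so every pair must satisfy 1(b)(ii); the parity requirement alone then collapses, because if $a_1-a_2$ and $a_2-a_3$ are both odd then $a_1-a_3=(a_1-a_2)+(a_2-a_3)$ must be even, contradicting the odd-parity condition on the third pair.

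Scenario~2 has exactly one Type-I vertex, say $A=\langle r^{d_1}\rangle$. The adjacencies $A\sim B$ and $A\sim C$ combined with Proposition~1(a) yield $(d_1,d_2)=(d_1,d_3)=1$, and together with $d_2,d_3>1$ (needed for $B,C$ to be proper) this pins $A$ down to $\langle r\rangle$. The remaining edge $B\sim C$ must then be produced by 1(b)(ii), forcing $(d_2,d_3)=2$ and $a_2-a_3$ odd; a short enumeration using the prime-power hypothesis on $n$ then shows that the only candidate configurations either collapse to the same subgroup or violate the required parity on $a_i-a_j$. I expect Scenario~2 to be the main obstacle, since it requires careful bookkeeping of which parameter pairs $(d,a)$ give genuinely distinct proper subgroups of $Q_{2^n}$ under the prime-power hypothesis and then eliminating the remaining finite list by hand, whereas Scenario~1 and the forward direction are both essentially immediate once Proposition~1 is in hand.
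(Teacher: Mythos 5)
Your forward direction is the paper's own argument (the triangle $\langle r^{p},s\rangle$, $\langle r^{q},s\rangle$, $\langle r\rangle$ for two distinct primes $p,q\mid n$), and your Scenario~1 parity argument is correct and cleaner than anything the paper offers. But the genuine problem sits exactly where you deferred the work: the ``short enumeration'' in Scenario~2 cannot be completed, because for $p=2$ a surviving configuration exists. Take $A=\langle r\rangle$, $B=\langle r^{2},s\rangle$, $C=\langle r^{2},rs\rangle$: Proposition~1(a) gives $A\sim B$ and $A\sim C$ since $(1,2)=1$, and Proposition~1(b)(ii) gives $B\sim C$ since $(2,2)=2$ and $0-1$ is odd. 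Independently of the proposition, these are the three maximal subgroups of the generalized quaternion $2$-group, each normal of index $2$, so any two of them multiply to the whole group. That is a triangle, so the converse direction is simply false when $n$ is a power of $2$, and no amount of bookkeeping in Scenario~2 will eliminate it; this is also what the paper's own nilpotent-group theorem predicts, since no non-cyclic $p$-group has a triangle-free co-maximal subgroup graph. For odd $p$ your outline does close, and more easily than you anticipate: every proper Type-II subgroup $\langle r^{d},r^{i}s\rangle$ contains $(r^{i}s)^{2}$ and hence is proper only when $p\mid d$, so no two Type-II vertices are adjacent at all and both scenarios die at once. The statement your method actually proves is ``$n$ a power of an \emph{odd} prime,'' parallel to the dihedral theorem.

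For comparison, the paper's converse is no sounder than yours: it asserts ``without loss of generality $A=\langle x^{p},y\rangle$'' with no justification and then claims $\langle x^{p},y\rangle\nsim\langle x^{p},xy\rangle$, which for $p=2$ contradicts its own Proposition~1(b)(ii); so you have reproduced, in more systematic clothing, the same error. Two smaller slips in your write-up: your $(\Leftarrow)$/$(\Rightarrow)$ labels are interchanged (you prove ``not a prime power $\Rightarrow$ triangle'' under the heading $(\Leftarrow)$), and in Scenario~2 the conditions $(d_{1},d_{2})=(d_{1},d_{3})=1$ do not pin $A$ down to $\langle r\rangle$ --- for odd $p$ the subgroup $\langle r^{2}\rangle$ also satisfies them --- though neither affects the substance. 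Finally, note that Proposition~1 is stated only for the $2$-group (with $d\mid 2^{n-1}$), so importing its gcd criteria to a general dicyclic group, as both you and the paper do, technically requires re-deriving them with $d\mid 2n$.
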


\begin{proof}
Suppose, $\Gamma(Q_{2^n})$ is triangle-free.\\
Let $Q_{2^n}$ be generated by $x, y$, with $|y|=4$, and $p, q$ be two primes such that $n=pq$ (with $p>q$).\\
Now, $\Gamma(Q_{2^n})$ contains a triangle with $3$ vertices as $\langle x^{p}, y \rangle, \langle x \rangle$ and $\langle x^{q}, y \rangle$. Thus $n$ must be a prime power.\\
To prove the converse, let $n=p^{k}$. Contrarily let, $\Gamma(Q_{2^n})$ carries a triangle $A, B, C$. Without loss of generality, let $A=\langle x^{p}, y\rangle$. Then $B, C$ must have the possibilities $\langle x \rangle, \langle x^{p}, xy \rangle$ respectively. But $A\nsim C$, which contradicts that $A, B, C$ are the vertices of a triangle. Hence, in this case $\Gamma(Q_{2^n})$ is triangle-free.
\end{proof}

%%%%%%%%%%%%%%%%%%%%%%%%%%%%%%%%%%%%%%%%%%%%%%%%%%%%%%

\section{Claw-free}  
A graph is said to be claw-free if it does not contain the complete bipartite graph $K_{1, 3}$ as its induced subgraph. The classes of graphs which are claw-free includes a complete graph ($K_{n}$), a cycle graph ($C_{n}$), a path graph ($P_{n}$), rook graph, line graph, strongly perfect graph, antiprism graphs, cocktail party graph etc. We are now interested to analyze the finite nilpotent groups $G$ for which $\Gamma(G)$ is claw-free.
\begin{theorem}
    Let $G$ be a finite nilpotent group. Then $\Gamma(G)$ is claw-free if and only if $G$ is one of the following:\\
    a) If $|\pi(G)|=1$, then $G$ is either a cyclic $p$-group or $C_{p}\times C_{p}$ or $Q_{8}$, where $p$ is a prime;\\
    b) If $|\pi(G)|=2$, then $G$ is the group either $C_{pq}$ or $C_{p^{a}q}$ ($a \geq 2$), where $p, q$ are the distinct primes dividing $o(G)$.
\end{theorem}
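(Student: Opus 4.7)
The plan is to mirror the two-phase structure of Theorem 2.1 (the cluster-graph classification). Every cluster graph is claw-free, so the list of nilpotent groups producing a claw-free $\Gamma(G)$ contains the cluster list and is enlarged, in the $|\pi(G)|=2$ case, by the cyclic family $C_{p^a q}$. The proof divides into two parts: necessity, in which every other nilpotent group is ruled out by exhibiting an induced $K_{1,3}$, and sufficiency, in which $\Gamma(G)$ is computed directly for each listed group and inspected for claws.

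For necessity, I would first bound $|\pi(G)| \leq 2$: writing $G = P_1 \times \cdots \times P_r$ with $r \geq 3$, I would centre a claw at $A = P_1 \times P_2$. Any neighbour $K$ must contain $P_3 \times \cdots \times P_r$ in full so that $AK=G$, so $K$ is determined by its first two Sylow components, and three such $K$ obtained by suitably varying those two components can be arranged to be pairwise non-co-maximal. In the case $|\pi(G)| = 1$, I would reproduce the $p$-group argument of Theorem 2.1: a non-cyclic maximal subgroup $N$ is $2$-generated, so two distinct order-$p$ subgroups of $N$ together with a second maximal subgroup form a claw centred at $N$. Hence every maximal subgroup of $G$ is cyclic, so $G$ is a cyclic $p$-group, $C_p \times C_p$, or $Q_{2^n}$; the case $Q_{2^n}$ with $n \geq 4$ is then excluded by an explicit claw built from Type-I and Type-II subgroups using the adjacency criteria of the preceding Proposition. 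For $|\pi(G)| = 2$, write $|G| = p^a q^b$ and $G = P \times Q$; the same $2$-generated argument forces both Sylow subgroups to be cyclic, so $G \cong C_{p^a q^b}$. A further claw centred at the unique subgroup of order $p^a$, whose neighbours in $C_{p^a q^b}$ form a chain of nested (hence pairwise non-co-maximal) subgroups, then forces the exponents down to the listed cases.

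For sufficiency I would describe $\Gamma(G)$ explicitly in each case. For a cyclic $p$-group the proper subgroups form a chain, so $\Gamma(G)$ is edgeless. For $C_p \times C_p$ the $p+1$ order-$p$ subgroups are pairwise co-maximal, giving $\Gamma(G) \cong K_{p+1}$. For $Q_8$ the three cyclic subgroups of order $4$ form a $K_3$ and the order-$2$ centre is isolated, so $\Gamma(G) \cong K_3 \sqcup K_1$. For $C_{pq}$ the graph is $K_2$. For $C_{p^a q}$ in the admissible range, a subgroup-lattice computation shows that the only co-maximal pairs are between the unique order-$p^a$ subgroup and each subgroup of the form $C_{p^i q}$ with $0 \leq i \leq a-1$, producing a star together with isolated vertices whose claw-freeness is verified by direct inspection. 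The main obstacle throughout is the necessity direction: each chosen centre $A$ rigidly fixes the ``large'' Sylow components of its neighbours via the constraint $AK=G$, and exhibiting three neighbours that are simultaneously pairwise non-co-maximal is a delicate bookkeeping task whose combinatorics drive every one of the case distinctions on $|\pi(G)|$, on cyclicity of the Sylow subgroups, and on the permissible exponents.
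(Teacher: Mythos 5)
Your overall architecture matches the paper's (bound $|\pi(G)|$, split into the one- and two-prime cases, force the Sylow subgroups to be cyclic, then inspect each listed group), but three concrete steps fail, and two of them expose problems with the statement itself rather than just with your write-up. First, your claw in the $p$-group case is centred at the non-cyclic maximal subgroup $N$ with two order-$p$ subgroups of $N$ as leaves; but a subgroup $X$ contained in $N$ satisfies $XN=N\neq G$, so those leaves are not adjacent to the centre. The claw has to be centred at a \emph{second} maximal subgroup $M$, with leaves $N$ and two order-$p$ subgroups of $G$ lying outside $M$ (this is essentially what the paper does, and it is also exactly where $Q_{2^n}$ escapes the argument, since it may have no order-$2$ subgroup outside $M$ and must be treated separately, as you do).

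Second, and more seriously: your description of $\Gamma(C_{p^{a}q})$ as the star $K_{1,a}$ (centre $C_{p^a}$, leaves $C_{p^{i}q}$ for $0\le i\le a-1$) plus isolated vertices is correct, but a star with $a\ge 3$ leaves \emph{is} an induced $K_{1,3}$, so your sufficiency check for case (b) fails for every $a\ge 3$. Indeed your own necessity argument (the nested chain of neighbours of the Sylow $p$-subgroup) produces exactly this claw and, carried to its conclusion, forces the exponent down to $a\le 2$, not to ``the listed cases''; the proposal is internally inconsistent, and the inconsistency is real, since $\Gamma(C_{p^{3}q})$ contains the claw with centre $C_{p^3}$ and leaves $C_{q}, C_{pq}, C_{p^{2}q}$. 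Part (b) of the theorem therefore cannot be proved as stated (the paper's converse for this case is only the assertion that it ``can be easily proved''). Third, the step ``$r\ge 3$ gives a claw centred at $P_1\times P_2$'' cannot always be arranged: any neighbour of $P_1\times P_2$ must contain $P_3\times\cdots\times P_r$ in full, so when every Sylow subgroup has prime order and $r=3$, i.e.\ $G\cong C_{pqr}$, the centre has only the three neighbours $P_3$, $P_1P_3$, $P_2P_3$, and the last two are adjacent. In fact $\Gamma(C_{pqr})$ is the net graph (a triangle with a pendant at each vertex), which is claw-free, so the reduction to $|\pi(G)|\le 2$ also breaks at this point; the paper's own construction for $r=3$ uses a ``subgroup'' of order $p_1p_2^{\alpha_2}p_3^{\alpha_3}$, which equals $|G|$ when $\alpha_1=1$, and so has the same gap.
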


\begin{proof}
Let $G$ be a finite nilpotent group such that $\Gamma(G)$ is claw-free.\\
First we observe that $|\pi(G)|\leq 2$.\\
If $|G|=p_{1}^{\alpha_{1}}p_{2}^{\alpha_{2}}p_{3}^{\alpha_{3}}$, then $\Gamma(G)$ contains a claw with $3$ pendent vertices as: $A, B, C$ such that $|A|=p_{1}p_{2}^{\alpha_{2}}p_{3}^{\alpha_{3}}, |B|=p_{1}p_{3}^{\alpha_{3}}, |C|=p_{2}^{\alpha_{2}}p_{3}^{\alpha_{3}}$ and the central vertex say $H$ is of order $p_{1}^{\alpha_{1}}p_{2}^{\alpha_{2}}$. Similarly if $|G|$ has $4$ and above distinct prime divisors then $\Gamma(G)$ contains a claw.\\
a) Suppose $|\pi(G)|=1$.\\
Then $G$ must be a $p$-group of order $p^{a}$. If $G$ is cyclic then the result holds trivially. \\
Now consider $G$ as non-cyclic $p$-group. Let $M$ be a maximal subgroup of $G$. Then $|M|=p^{a-1}$ and $|G\setminus M|=p^{a-1}(p-1)$. If $G$ has more than one subgroup of order $p$ then $G\setminus M$ contains $p^{a-1}$ subgroup of order $p$. Otherwise, $G$ will be a dicyclic group $Q_{2^n}$ ($n \geq 3$). Let $G\cong Q_{2^n}$. Here, if $n \geq 4$ then there exist a unique cyclic maximal subgroup say $A$ and atleast $3$ cyclic subgroup of order $4$ such that they form a claw along with the central vertex as $A$. But if $n=3$ then $G \cong Q_{8}$. Since every cyclic subgroup of $Q_{8}$ are order $4$ and they are maximal so any three form a triangle. Hence in this case $\Gamma(G)$ is claw-free.\\
Let $G$ be the group whose minimal subgroup is not unique. Now if $a \geq 3$ then $|M|=p^{a-1}$ and $G \setminus M$ contains at least $3$ distinct subgroups of order $p$. Thus $\Gamma(G)$ contains a claw together with the central vertex $M$. But, when $a=2$ then $G \cong C_{p}\times C_{p}$. In this case all non-trivial subgroups are of order $p$. So, any non-trivial subgroup is a maximal subgroup of $G$. Therefore any $3$ of them form a triangle which implies that $\Gamma(G)$ is claw-free.\\
b) Next, let $|\pi(G)|=2$.\\
In this case, $|G|=p^{a}q^{b}$, where $a, b \geq 1$. Here we observe that both $a, b$ can't be  $\geq 2$. If possible let, $a \geq 2, b \geq 2$. Then $\Gamma(G)$ contains a claw with $3$ pendent vertices are of orders $p^{a-1}q^{b}, q, q^{b}$ and central vertex is of order $p^{a}q^{b-1}$. Thus, at least one of $a, b$ must be $1$. Let $a \geq 2$ and $b=1$ and $|G|=p^{a}q$.\\
We claim that $G$ must be a cyclic group. \\
For the sake of contradiction let  $G$ be non-cyclic and $P, Q$ be the Sylow $p$-subgroup, Sylow $q$-subgroup of $G$ resp. Clearly the Sylow $q$-subgroup $Q$ of $G$ must be cyclic. We show that the Sylow $p$-subgroup is also cyclic. If possible let, Sylow $p$-subgroup $P$ of $G$ is non-cyclic. Then $G$ has at least two elements of order $p$ say $a, b$ such that $\langle a \rangle \cap \langle b \rangle =\{e\}$. Let $Q=\langle c \rangle$. So, $\Gamma(G)$ contains a claw with pendent vertices $A, B, C$ as: $\langle ac \rangle, \langle bc \rangle, \langle c \rangle$ resp. and the central vertex $H$ is $P$. Therefore both Sylow subgroups of $G$ is cyclic and hence $G\cong C_{p^{a}q}$.\\
Again, if $a=b=1$ then $G$ is the cyclic group $C_{pq}$.\\
\textbf{Converse Part:}\\
If $G$ is either a cyclic $p$-group or $C_{pq}$ or $C_{p^{a}q}$ it can be easily prove that $\Gamma(G)$ is claw-free.\\
If $G\cong C_{p}\times C_{p}$, then every non-trivial subgroup is of order $p$ and any $3$ such subgroups form a triangle. Thus, $\Gamma(G)$ fails to form a claw.\\
Similarly if $G \cong Q_{8}$, one can easily check that $\Gamma(G)$ is claw-free.
\end{proof}
To draw the conclusion $\Gamma(G)$ to be a line graph or not, first of all $\Gamma(G)$ should be a claw-free graph. Then by the above theorem and the Theorem \ref{line} we get the following conclustion.
\begin{corollary}
    If $G$ is a finite nilpotent group, then $\Gamma(G)$ is a line graph of some graph if and only if $G$ is either a cyclic $p$-group or $C_{p}\times C_{p}$ or $Q_{8}$ or $C_{pq}$ or $C_{p^{a}q}$ ($a\geq 2$), where $p, q$ are two distinct primes.
\end{corollary}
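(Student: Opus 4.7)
The plan is to reduce the corollary to the preceding claw-free classification via Theorem~\ref{line}. The first Beineke graph $\Gamma_1$ is the claw $K_{1,3}$, so being claw-free is a necessary condition for any graph to be a line graph. This handles the forward direction at once: if $\Gamma(G)$ is a line graph, then $\Gamma(G)$ is claw-free, and the theorem immediately preceding this corollary forces $G$ to lie in the asserted list.

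For the converse, my plan is to verify case by case that $\Gamma(G)$ is a line graph for each of the five families by producing an explicit preimage graph $H$ with $L(H) = \Gamma(G)$. When $G$ is a cyclic $p$-group, the lattice of non-trivial proper subgroups is a chain, so no two subgroups multiply to $G$ and $\Gamma(G)$ is edgeless; the edgeless graph on $n$ vertices is $L(nK_2)$. When $G \cong C_p \times C_p$, every non-trivial proper subgroup has order $p$ and any two distinct such subgroups intersect trivially, giving $\Gamma(G) \cong K_{p+1} = L(K_{1,p+1})$. When $G \cong Q_8$, the three cyclic order-$4$ subgroups all contain the unique order-$2$ subgroup and pairwise generate $Q_8$, so $\Gamma(Q_8) \cong K_3 \sqcup K_1$, itself a line graph. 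When $G \cong C_{pq}$, only the two Sylow subgroups are non-trivial and proper, and they are adjacent, so $\Gamma(G) \cong K_2 = L(P_3)$.

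The most delicate case is $G \cong C_{p^{a}q}$ with $a \geq 2$. Using the abelian fact that two subgroups of orders $p^{i}q^{j}$ and $p^{k}q^{l}$ are adjacent in $\Gamma(G)$ exactly when $\max(i,k) = a$ and $\max(j,l) = 1$, I would first describe $\Gamma(G)$ explicitly as a star centred at $C_{p^{a}}$ (joined to each subgroup of order $p^{i}q$, $0\leq i \leq a-1$) together with the isolated vertices $C_{p^{i}}$ for $1 \leq i \leq a-1$, and then exhibit a graph $H$ realising this structure as a line graph, or equivalently verify directly that no $\Gamma_i$ from the Beineke list occurs as an induced subgraph. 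This is the expected main technical obstacle, since the adjacency structure here is richer than in the other cases; however, because $\Gamma(G)$ is already known to be claw-free by the preceding theorem, the task reduces to ruling out the remaining eight Beineke graphs, which can be carried out by inspection of the simple star-plus-isolated-vertex structure.
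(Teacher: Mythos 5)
Your forward direction is exactly the paper's (very terse) argument: $K_{1,3}$ is the Beineke graph $\Gamma_1$, so a line graph is claw-free, and the preceding theorem pins down $G$. Your first four converse cases (cyclic $p$-group, $C_p\times C_p$, $Q_8$, $C_{pq}$) are correct and actually supply the explicit preimage graphs that the paper never writes down.

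The genuine gap is in the case $G\cong C_{p^a q}$, and it is fatal rather than cosmetic. Your structural description of $\Gamma(G)$ is right: since $G$ is cyclic, subgroups of orders $p^iq^j$ and $p^kq^l$ satisfy $|HK|=p^{\max(i,k)}q^{\max(j,l)}$, so the graph is a star centred at $C_{p^a}$ with the $a$ leaves $C_{q}, C_{pq},\dots,C_{p^{a-1}q}$ (pairwise non-adjacent, since the lcm of any two leaf orders is a proper divisor of $|G|$), together with the isolated vertices $C_{p^i}$, $1\le i\le a-1$. But a star with $a$ leaves is $K_{1,a}$, which for $a\ge 3$ contains $K_{1,3}=\Gamma_1$ as an induced subgraph: concretely, $C_{p^a}$ together with $C_q$, $C_{pq}$, $C_{p^2q}$ is an induced claw in $\Gamma(C_{p^3q})$. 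So for $a\ge 3$ the graph is neither claw-free nor a line graph. Your plan to "rule out the remaining eight Beineke graphs because claw-freeness is already known" cannot be executed: the claw-freeness you import from the preceding theorem is itself false for $a\ge 3$ (its converse part is unproved there, and your own computation refutes it), and the first Beineke graph already occurs. You should have noticed that the structure you derived contradicts the hypothesis you were leaning on. The statement, and your verification, are correct only for $a=2$, where the star is $P_3$ and $\Gamma(G)\cong P_3\sqcup K_1=L(P_4\sqcup K_2)$; as written, both the corollary and the claw-free theorem it rests on need the range of $a$ restricted.
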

\begin{theorem}
 $\Gamma(D_{n})$ is a claw-free graph if and only if $n=4$.
\end{theorem}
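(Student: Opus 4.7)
The plan is to treat the two directions separately. The harder direction is to show that $n \neq 4$ forces a claw, which I would handle by a case-split on the prime factorization of $n$; the direction that $\Gamma(D_4)$ itself is claw-free is a finite verification.

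For the ``only if'' direction, suppose first that $n$ admits an odd prime divisor $p$. The three Type-II subgroups $\langle r^p, s\rangle$, $\langle r^p, rs\rangle$, $\langle r^p, r^2 s\rangle$ are distinct proper subgroups (since $p\geq 3$), each adjacent to $\langle r\rangle$ because the product $\langle r\rangle \cdot \langle r^p, r^i s\rangle$ already contains $r$ and a reflection and so exhausts $D_n$. For pairwise non-adjacency, the order count $|HK|=|H||K|/|H\cap K|$ gives $|HK|=4n/p < 2n$ when two such subgroups share the rotation part $\langle r^p\rangle$ but no common reflections (which happens for reflection indices differing by less than $p$). Thus $\{\langle r\rangle, \langle r^p, s\rangle, \langle r^p, rs\rangle, \langle r^p, r^2 s\rangle\}$ induces $K_{1,3}$.

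For the remaining case $n = 2^k$ with $k \geq 3$, I would run the same argument with $d = 4$ in place of $p$: the three subgroups $\langle r^4, s\rangle$, $\langle r^4, rs\rangle$, $\langle r^4, r^2 s\rangle$ are proper (since $4 < 2^k$), each adjacent to $\langle r\rangle$ via the coprimality $\gcd(1,4)=1$, and the same order count yields $|HK|=n<2n$ for each pair, so they are pairwise non-adjacent. This again produces a claw.

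For the ``if'' direction, with $n = 4$, I would enumerate the non-trivial proper subgroups of $D_4$ (eight in total) --- the centre $\langle r^2\rangle$, the cyclic rotation subgroup $\langle r\rangle$, the two Klein four-subgroups $\langle r^2,s\rangle$ and $\langle r^2,rs\rangle$, and the four reflection subgroups $\langle r^i s\rangle$ for $0\leq i\leq 3$ --- compute the full adjacency structure via the $HK = G$ criterion, and check exhaustively that no induced $K_{1,3}$ appears. The main obstacle I foresee is precisely at this step: the natural claw candidate from the preceding paragraph degenerates at $n = 4$ (where $d = 4 = n$ yields the reflections $\langle s\rangle, \langle rs\rangle, \langle r^2 s\rangle$), so one must identify the specific structural feature of $D_4$ that rules out this and every other potential $K_{1,3}$, and this boundary-case analysis is where the delicate work of the proof lies.
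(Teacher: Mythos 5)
Your ``only if'' direction is complete and correct, and in fact tightens the paper's argument: the single claw with centre $\langle r\rangle$ and leaves $\langle r^{d},s\rangle, \langle r^{d},rs\rangle, \langle r^{d},r^{2}s\rangle$, with $d=p$ an odd prime divisor of $n$ (resp.\ $d=4$ when $n=2^{k}$, $k\ge 3$), is verified cleanly by the order count $|HK|=|H||K|/|H\cap K|$, and it covers uniformly the case $n=p$ prime, where the paper's choice of pendant vertices degenerates ($\langle r^{p},rs\rangle=\langle rs\rangle$ there). The paper instead uses a different central vertex $\langle r^{q},s\rangle$ when $n$ is composite and treats odd prime powers and powers of $2$ separately; your version is more economical and the non-adjacency checks are actually written down.

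The gap is in the converse, and it is not merely that you have left the finite check undone: if you carry out the check you describe on the eight non-trivial proper subgroups you correctly enumerate, you will find a claw. Each reflection subgroup $\langle r^{i}s\rangle$ ($0\le i\le 3$) meets $\langle r\rangle$ trivially, so $|\langle r\rangle\langle r^{i}s\rangle|=4\cdot 2=8$ and all four are adjacent to $\langle r\rangle$; any two distinct reflection subgroups have product of order $4<8$ and are non-adjacent. Hence $\{\langle r\rangle,\langle s\rangle,\langle rs\rangle,\langle r^{2}s\rangle\}$ induces $K_{1,3}$ in $\Gamma(D_{4})$ --- and the same four subgroups give a claw in $\Gamma(D_{n})$ for every $n\ge 3$. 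So your premonition that the boundary case is where the delicate work lies is exactly right, but the verification cannot succeed as planned. The paper's own converse argument lists only six subgroups of $D_{4}$, omitting $\langle r^{2}s\rangle$ and $\langle r^{3}s\rangle$ (it keeps one reflection subgroup from each conjugacy class), and it is only on that truncated vertex set that no claw appears. You therefore need either to adopt and state a restricted vertex set under which the claim becomes true, or to flag that with the standard vertex set (all non-trivial proper subgroups, $H\sim K$ iff $HK=G$) the ``if'' direction, and hence the theorem as stated, fails.
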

\begin{proof}
Firstly, let $\Gamma(D_{n})$ be a claw-free graph.\\
Let $p, q$ be two primes with $n=pq$ (where $p>q$), and $D_{n}$ be generated by $r, s$ where $|s|=2$. Now, $\Gamma(D_{n})$ contains a claw with the pendant vertices as $\langle r^{p}, s \rangle, \langle r^{p}, r^{2}s \rangle$ and $\langle r^{p} \rangle$ and the central vertex is $\langle r^{q}, s \rangle$. Similarly, if $n$ has more than two prime divisors then we can prove that $\Gamma(D_{n})$ contains a claw. Thus, $n=p^k$.\\
Let $p$ be odd. Again we obtain that $\Gamma(D_{n})$ contains a claw in which $3$ pendant vertices are $\langle r^{p}, s \rangle, \langle r^{p}, rs \rangle, \langle rs \rangle$ respectively and the central vertex is $\langle r \rangle$. Thus in this case $\Gamma(D_{n})$ is never be a claw-free graph.\\
Now consider $n=2^{k}$. If $k \geq 3$, then $\Gamma(D_{n})$ contains a claw with the central vertex as $\langle r \rangle$ and $3$ pendant vertices as: $\langle r^{2}, rs \rangle, \langle r^{4}, r^{3}s \rangle, \langle rs \rangle$. This gives $k=2$ and $n=4$.\\
Conversely, let $n=4$. The non-trivial subgroups are either $\langle r \rangle$, or $\langle r^{2}\rangle$, or $\langle r^{2}, s\rangle$, or $\langle r^{2}, rs\rangle$, or $\langle s \rangle$, or $\langle rs \rangle$.  But for any choices of the vertices we obtain that $\Gamma(D_{4})$ does not contain a claw. 
\end{proof}

\begin{theorem}
$\Gamma(Q_{2^n})$, where $n \geq 3$, is not a claw-free graph.
\end{theorem}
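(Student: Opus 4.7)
The plan is to exhibit an induced $K_{1,3}$ in $\Gamma(Q_{2^n})$ by applying Proposition 1 directly. Since every divisor of $2^{n-1}$ is a power of $2$, the Type-I subgroup $\langle r\rangle$ (parameter $d_1=1$) satisfies $\gcd(1,d_2)=1$ for every Type-II parameter $d_2$; part (a) of Proposition 1 then guarantees that $\langle r\rangle$ is adjacent to \emph{every} Type-II subgroup. I would take $\langle r\rangle$ as the centre of the claw.

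For the three pendants I would fix the common parameter $d=2^{n-2}$ and consider the Type-II subgroups
\[
H_k = \langle r^{2^{n-2}},\,r^{k}s\rangle, \qquad k=0,1,2.
\]
For $n\geq 4$, so that $2^{n-2}\geq 4$, the three $H_k$ are pairwise distinct (the representatives $r^{k}s$ lie in distinct cosets of $\langle r^{2^{n-2}}\rangle$). By the previous paragraph each $H_k$ is adjacent to $\langle r\rangle$. For any two of the $H_k$, the common parameter gives $\gcd(2^{n-2},2^{n-2})=2^{n-2}\geq 4$, which is neither $1$ nor $2$; part (b) of Proposition 1 therefore rules out adjacency. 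Hence $\{\langle r\rangle, H_0, H_1, H_2\}$ induces a claw in $\Gamma(Q_{2^n})$.

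The main obstacle is the border case $n=3$, where $2^{n-2}=2$ and the construction above collapses to only two distinct pendants. For that case I would proceed by directly inspecting the four non-trivial proper subgroups of $Q_8$ to decide whether an induced claw exists, and if not, one would need either a different three-pendant construction or a revision of the statement for the small case. A minor technical verification needed throughout the main argument is that $d=2^{n-2}$ is a legitimate defining parameter for each $H_k$ when invoking Proposition 1(b); this is immediate since $(r^{k}s)^{2}=r^{2^{n-2}}$, so $r^{2^{n-2}}$ automatically lies in the subgroup generated by $r^{k}s$.
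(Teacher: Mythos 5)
Your construction for $n\ge 4$ is correct and takes a genuinely different (and more careful) route than the paper's. You take the maximal cyclic subgroup $\langle r\rangle$ as the centre and three distinct cyclic subgroups of order $4$, namely $H_0,H_1,H_2$, as pendants, and every adjacency and non-adjacency is certified by Proposition 1: part (a) with $d_1=1$ gives the three edges to the centre, and part (b) with $(d_1,d_2)=2^{n-2}\ge 4$ rules out edges among the pendants. The paper instead takes pendants $\langle x^{p},y\rangle$, $\langle x^{p},xy\rangle$, $\langle xy\rangle$ for a prime $p$ with $n=p^{k}$; note that when $p=2$ the first two of these are distinct maximal subgroups (equivalently, Proposition 1(b)(ii) applies with $(d_1,d_2)=2$ and $i-j$ odd), so they are adjacent and the paper's configuration is not an induced claw in the $2$-group case. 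Your Proposition-1-based argument avoids exactly this pitfall, so what it buys is correctness precisely where the paper's verification is weakest.

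The case $n=3$ that you flag is not a repairable gap in your argument: $\Gamma(Q_{8})$ really is claw-free. The only non-trivial proper subgroups of $Q_{8}$ are the centre $\langle r^{2}\rangle$, which lies in every subgroup and is therefore an isolated vertex, and the three maximal subgroups $\langle r\rangle,\langle s\rangle,\langle rs\rangle$, any two of which generate $Q_{8}$ and hence form a triangle; thus $\Gamma(Q_{8})\cong K_{3}\cup K_{1}$, which contains no induced $K_{1,3}$. This agrees with the paper's own theorem in Section 4, which lists $Q_{8}$ among the nilpotent groups whose co-maximal subgroup graph is claw-free, so under the order-$2^{n}$ convention of Proposition 1 the present statement is actually false at $n=3$ and should read $n\ge 4$. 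Your instinct that the small case might force ``a revision of the statement'' is the right one; with that revision your proof settles every case.
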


\begin{proof}
Let $p$ be a prime such that $n=p^k$. Suppose, $x, y$ are two generators of $Q_{2^n}$ with $|y|=4$. Then $\Gamma(Q_{2^n})$ contains a claw where the $3$ pendant vertices are $\langle x^{p}, y \rangle, \langle x^{p}, xy \rangle$ and $\langle xy \rangle$ resp. and the central vertex is $\langle x \rangle$. In a similar manner one can easily find out a claw in $\Gamma(Q_{2^n})$ if $n$ has more than one prime divisors.
\end{proof}

%%%%%%%%%%%%%%%%%%%%%%%%%%%%%%%%%%%%%%%%%%%%%%%%%%%%%%

\section{Cograph}
A graph is called a cograph if it does not contain a path on $4$ vertices, that is $P_{4}$, as its induced subgraph.\\
A cograph can be constructed from a single vertex by repeating applications of disjoint union and complement. The diameter of a cograph is $2$. It is a subclass of a perfect graph and it contains the class of Threshold graph, cluster graph, complete graph, complete bipartite graph etc. In this section we characterize all finite nilpotent groups $G$ such that $\Gamma(G)$ is a cograph. Let us start with the groups $D_{n}$, $Q_{n}$ and  find the values of $n$ such that $\Gamma(D_{n})$ and $\Gamma(Q_{2^n})$ is a cograph.
\begin{theorem}
\label{Comax_co_dih}
$\Gamma(D_{n})$ is a cograph if and only if $n$ is a power of an odd prime.
\end{theorem}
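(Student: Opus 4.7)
The plan is to prove the two directions separately by exploiting the clean description of the subgroup lattice of $D_n$: the proper nontrivial subgroups are either cyclic of the form $\langle r^d\rangle$ with $d\mid n$, $d>1$, or of ``Type-II'' form $\langle r^d, r^i s\rangle$ with $d\mid n$ and $0\le i<d$. Three adjacency rules in $\Gamma(D_n)$ fall out of a short set-product computation (exactly parallel to Proposition~0.1 for $Q_{2^n}$): two cyclic subgroups are never adjacent; $\langle r^{d_1}\rangle\sim \langle r^{d_2},r^i s\rangle$ iff $(d_1,d_2)=1$; and $\langle r^{d_1},r^{i_1}s\rangle\sim\langle r^{d_2},r^{i_2}s\rangle$ iff either $(d_1,d_2)=1$, or $(d_1,d_2)=2$ and $i_1-i_2$ is odd. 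I would record these rules first, then use them mechanically.

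For the sufficiency direction, assume $n=p^k$ with $p$ odd. Any proper cyclic $\langle r^{p^i}\rangle$ with $i\ge 1$ is isolated, since a Type-II neighbour $\langle r^{p^j},r^\ell s\rangle$ would require $(p^i,p^j)=1$, forcing $j=0$ and hence the ``subgroup'' to be $D_n$ itself. Next, $\langle r\rangle$ is adjacent to every proper Type-II subgroup because $\gcd(1,p^j)=1$. Finally, no two proper Type-II subgroups are adjacent: the gcd of two positive powers of $p$ is a positive power of $p$, so it can equal $1$ only for improper subgroups, and it can never equal $2$ because $p$ is odd. Hence $\Gamma(D_{p^k})$ is the disjoint union of a star centered at $\langle r\rangle$ with some isolated vertices, and such a graph is manifestly $P_4$-free.

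For the necessity direction, I would exhibit explicit induced $P_4$'s in two cases. If $n$ has two distinct prime divisors $p$ and $q$, then the four vertices
\[
\langle r^p\rangle \;-\; \langle r^q,s\rangle \;-\; \langle r^p,s\rangle \;-\; \langle r^q\rangle
\]
form an induced path: the three consecutive pairs all satisfy $\gcd=1$, while $\langle r^p\rangle\nsim\langle r^p,s\rangle$, $\langle r^q,s\rangle\nsim\langle r^q\rangle$, and $\langle r^p\rangle\nsim\langle r^q\rangle$ (the last by the cyclic-cyclic rule). If instead $n=2^k$ with $k\ge 2$, I would use
\[
\langle s\rangle \;-\; \langle r^2,rs\rangle \;-\; \langle r^2,s\rangle \;-\; \langle rs\rangle,
\]
where the three consecutive Type-II pairs meet the $\gcd=2$ with odd difference criterion, while the non-consecutive pairs $\langle s\rangle,\langle r^2,s\rangle$ and $\langle r^2,rs\rangle,\langle rs\rangle$ fail (even difference) and $\langle s\rangle,\langle rs\rangle$ fails because $(2^k,2^k)=2^k\ge 4$.

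The main obstacle is justifying the adjacency criterion for two Type-II subgroups of $D_n$, which is the dihedral analog of Proposition~0.1(b) but is not stated explicitly in the paper. I would therefore include a short derivation: decompose $HK$ into its rotation part $(d_1\mathbb{Z}\cup(i_1-i_2+d_1\mathbb{Z}))\pmod n$ and its reflection part $(i_1+d_1\mathbb{Z})\cup(i_2+d_2\mathbb{Z})\pmod n$ with $d=\gcd(d_1,d_2)$, and observe that both unions exhaust $\mathbb{Z}/n\mathbb{Z}$ precisely in the two cases listed. Everything else in the proof is then a bookkeeping application of the three adjacency rules.
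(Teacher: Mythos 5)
Your proposal is correct, and its necessity half is essentially the paper's: the paper exhibits the induced path $\langle r^{p}\rangle \sim \langle r^{q}, rs \rangle \sim \langle r^{p}, rs \rangle \sim \langle r^{q}\rangle$ when $n$ has two distinct prime divisors and exactly your path $\langle s \rangle \sim \langle r^{2}, rs \rangle \sim \langle r^{2}, s \rangle \sim \langle rs \rangle$ when $n=2^{k}$. Where you genuinely diverge is the sufficiency direction. The paper assumes a path $A\sim B\sim C\sim D$ and declares ``without loss of generality $A=\langle r^{p},s\rangle$,'' which leaves a gap, since a priori a $P_{4}$ need not contain that vertex; you instead determine the whole graph for $n=p^{k}$ with $p$ odd --- a star centred at $\langle r\rangle$ whose leaves are the proper Type-II subgroups, plus the isolated vertices $\langle r^{p^{i}}\rangle$ with $i\geq 1$ --- from which $P_{4}$-freeness is immediate. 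This is stronger and cleaner (it also yields triangle-freeness, chordality, and the claw structure at a glance), at the cost of stating and proving the dihedral adjacency criterion, which the paper never records (it only proves the dicyclic analogue in Proposition 0.1). One slip in your sketch of that criterion: with $d=\gcd(d_{1},d_{2})$, the rotation part of the set product $HK$ is $d\mathbb{Z}\cup\bigl(i_{1}-i_{2}+d\mathbb{Z}\bigr)$ and the reflection part is $\bigl(i_{1}+d\mathbb{Z}\bigr)\cup\bigl(i_{2}+d\mathbb{Z}\bigr)$ modulo $n$, with $d$ (not $d_{1}$ and $d_{2}$) throughout; as written, your reflection part lists only the reflections already lying in $H\cup K$. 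The criterion you actually state and apply is the correct one, so this is a reparable typographical error rather than a gap.
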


\begin{proof}
First suppose, $\Gamma(D_{n})$ be a cograph. Let $r, s$ be two generators of $D_{n}$ with $|s|=2$, and $p, q$ be two primes such that $n=pq$. Now $\Gamma(D_{n})$ contains a $4$-vertex path $\langle r^{p}\rangle \sim \langle r^{q}, rs \rangle \sim \langle r^{p}, rs \rangle \sim \langle r^{q}\rangle$. Similarly if $n$ has more than two distinct prime divisors or $n$ is product of powers of $p$ and $q$ then we can show that $\Gamma(D_{n})$ contains a $4$-vertex path. This implies that $n$ must be a power of some prime.\\
Let $n=2^{m}$, where $m\geq 2$. Here $\Gamma(D_{n})$ contains a path $\langle s \rangle \sim \langle r^{2}, rs \rangle \sim \langle r^{2}, s \rangle \sim \langle rs \rangle$. Thus, $n$ must be a power of an odd prime.\\
Conversely, let $n=p^{m}$, where $p$ is an odd prime and $m \geq 1$. Contrarily, let $A\sim B\sim C\sim D$ be a path in $\Gamma(D_{n})$. Without loss of generality, let $A=\langle r^{p}, s\rangle$. Then $B, C$ must be $\langle r \rangle$ and $\langle r^{p}, rs \rangle$ resp. So, $D$ must be either of the form $\langle r^{p^k}\rangle$ or $\langle r^{p^k}, r^{i}s \rangle$. But any of these choices for $D$, we have $C\nsim D$. Thus $\Gamma(D_{n})$ is a cograph in this case.
\end{proof}

\begin{theorem}
\label{Comax_co_dicyclic}
 $\Gamma(Q_{2^n})$ is a cograph if and only if $n$ is a prime power.
\end{theorem}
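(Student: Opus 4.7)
The plan is to adapt the two-directional proof of Theorem \ref{Comax_co_dih} to the dicyclic setting, relying on the adjacency dichotomy of the preceding Proposition.

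Forward direction. If $n$ is not a prime power, pick two distinct primes $p>q$ dividing $n$. In the spirit of the dihedral proof, I would display the candidate induced path
\[ \langle r^{p}\rangle \,\sim\, \langle r^{q}, s\rangle \,\sim\, \langle r^{p}, rs\rangle \,\sim\, \langle r^{q}\rangle \]
in $\Gamma(Q_{2^n})$. All three consecutive adjacencies reduce to $\gcd(p,q)=1$: the two mixed Type-I/Type-II edges come from Proposition (a), and the Type-II/Type-II middle edge from clause (i) of Proposition (b). The three pair non-edges are immediate: the two Type-I vertices are mutually non-adjacent by definition, and the remaining two mixed pairs share the same cyclic-part exponent, so their $\gcd$ equals $p$ or $q$, violating Proposition (a). If $n$ has more than two distinct prime divisors, any two of them can play the roles of $p$ and $q$.

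Reverse direction. Assume $n=p^{k}$ and suppose, toward contradiction, that $A\sim B\sim C\sim D$ is an induced $P_4$ in $\Gamma(Q_{2^n})$. Mirroring the dihedral argument, I would first show it suffices to normalize $A$ to a specific form, such as a minimal Type-II subgroup $\langle r^{p}, y\rangle$. The $\gcd=1$ clauses of the Proposition, together with the fact that the only nontrivial exponents available interact only through powers of $p$, should then collapse the candidates for $B$ to essentially $\langle r\rangle$ and one minimal mixed option. Propagating these restrictions forward determines the short list for $C$, and finally one verifies by case analysis that any $D$ satisfying $C\sim D$ together with $A\not\sim D$ and $B\not\sim D$ fails at least one of the three conditions, delivering the contradiction.

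The main obstacle is clause (ii) of Proposition (b), which allows two Type-II subgroups to be adjacent even when the coprimality clause fails, provided the $\gcd$ of the cyclic indices is $2$ and their shift indices differ by an odd number. For odd $p$ this clause is vacuous, since among $p$-power indices no $\gcd$ can equal $2$; but when $p=2$ it creates extra Type-II/Type-II adjacencies that could in principle close a short path into a $P_4$. I therefore expect the reverse direction to split into sub-cases $p$ odd and $p=2$, with the $p=2$ case supplying the bulk of the bookkeeping on the shift parameters $i$ along the candidate path.
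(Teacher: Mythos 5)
Your proposal follows essentially the same route as the paper: the forward direction exhibits exactly the same induced $P_4$, namely $\langle x^{p}\rangle \sim \langle x^{q}, y\rangle \sim \langle x^{p}, xy\rangle \sim \langle x^{q}\rangle$, and the converse is a case analysis driven by the adjacency proposition. Your treatment of the converse is in fact more careful than the paper's, which dismisses it with ``from the adjacency condition \dots it is clear''; in particular, your observation that clause (ii) of Proposition (b) could create extra Type-II adjacencies when $p=2$ is a genuine subtlety that the paper's one-line converse does not address.
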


\begin{proof}
Let, $\Gamma(Q_{2^n})$ be a cograph. Suppose $p, q$ are two primes with $n=pq$ (where $p>q$) and $x, y$ are two generators of $Q_{2^n}$. If $n=pq$ then $\Gamma(Q_{2^n})$ contains a path $\langle x^{p}\rangle \sim \langle x^{q}, y \rangle \sim \langle x^{p}, xy \rangle \sim \langle x^{q} \rangle$. Thus $n=p^{k}$ (where $p$ is a prime).\\
Conversely, let $n=p^{k}$. From the adjacency condition of any two non-trivial subgroups it is clear that $\Gamma(Q_{2^{p^k}})$ is a cograph.
\end{proof}

\begin{proposition}
\label{prop_cograph}
Let $G$ be a non-cyclic finite group. If $\Gamma(G)$ is a cograph then $G$ must be $2$-generated.
\end{proposition}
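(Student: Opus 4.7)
The plan is to prove the contrapositive: assume that $G$ is non-cyclic with $d(G)\geq 3$, and exhibit an induced four-vertex path in $\Gamma(G)$, so that $\Gamma(G)$ cannot be a cograph.

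The first step is to fix a minimal generating set $\{x_1,x_2,\ldots,x_d\}$ of $G$ with $d\geq 3$. Minimality of the set forces every proper sub-family to generate a proper subgroup; in particular $\langle x_2,\ldots,x_d\rangle$, $\langle x_1,x_3,\ldots,x_d\rangle$, and $\langle x_1,x_2\rangle$ are all proper in $G$. I then propose the four subgroups
\[
A=\langle x_1\rangle,\quad B=\langle x_2,x_3,\ldots,x_d\rangle,\quad C=\langle x_1,x_3,\ldots,x_d\rangle,\quad D=\langle x_2\rangle,
\]
and claim that $A$--$B$--$C$--$D$ is an induced $P_4$ in $\Gamma(G)$. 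The three required non-adjacencies are immediate from containment in proper subgroups: $A\subseteq C$ yields $AC=C\neq G$; $D\subseteq B$ yields $BD=B\neq G$; and $AD\subseteq\langle x_1,x_2\rangle\neq G$ yields $AD\neq G$.

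The main obstacle is to verify the three edges $AB=G$, $BC=G$, $CD=G$ as \emph{set products}, since adjacency in $\Gamma(G)$ is strictly stronger than joint generation: in a general finite group $\langle H,K\rangle=G$ need not imply $HK=G$. My plan for this step is to pass to the Frattini quotient $\overline{G}:=G/\Phi(G)$, which has $d(\overline{G})=d(G)\geq 3$, and in which the bijection $H\leftrightarrow H/\Phi(G)$ on subgroups containing $\Phi(G)$ satisfies the equivalence $HK=G\iff\overline{H}\,\overline{K}=\overline{G}$. After replacing $A,B,C,D$ by $A\Phi(G),\,B\Phi(G),\,C\Phi(G),\,D\Phi(G)$ if necessary, the three edge verifications in $\overline{G}$ reduce (in the nilpotent or $p$-group setting that is the focus of the surrounding sections) to a cardinality check via $|HK|=|H|\,|K|/|H\cap K|$ in the elementary-abelian Frattini quotient. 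Lifting the induced $P_4$ back through the Frattini correspondence produces the required $P_4$ in $\Gamma(G)$ and completes the argument; the only delicate point in pushing the same plan through beyond the nilpotent case is the product identity for $AB$ in the general Frattini-trivial quotient, which may force a minor enlargement of the cyclic subgroups $A$ and $D$ without disturbing the $P_4$ structure.
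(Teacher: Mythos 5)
Your construction is the same as the paper's: for a minimal generating set of size $d\geq 3$ the paper exhibits exactly your path (with $d=3$ it is $\langle z\rangle\sim\langle x,y\rangle\sim\langle y,z\rangle\sim\langle x\rangle$), and the three non-adjacencies are justified there exactly as you justify them, by containment in a proper subgroup. Where you go beyond the paper is in recognising the real difficulty: adjacency in $\Gamma(G)$ means $HK=G$ as a set product, and $\langle H,K\rangle=G$ does not imply $HK=G$ in general (two distinct subgroups of order $2$ in $S_3$ generate $S_3$, but their product has only four elements). The paper's proof simply asserts the three edges without comment, so it silently carries the same gap you are trying to fill.

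The problem is that your patch does not fill it either, and you concede as much. Passing to $\overline{G}=G/\Phi(G)$ is sound once you replace $A,B,C,D$ by $A\Phi(G)$, etc.: the non-edges survive because, for instance, $A\Phi(G)\cdot D\Phi(G)\subseteq\langle x_1,x_2\rangle\Phi(G)$, which is proper by the non-generator property, and for subgroups containing $\Phi(G)$ the equivalence $HK=G\iff\overline{H}\,\overline{K}=\overline{G}$ does hold. But the edges $AB=BC=CD=G$ still have to be established in $\overline{G}$, and your cardinality argument only closes this when $\overline{G}$ is abelian, i.e.\ when products of subgroups are automatically subgroups so that $HK=\langle H,K\rangle$. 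That covers nilpotent $G$, where $G'\leq\Phi(G)$, but the proposition is stated for an arbitrary non-cyclic finite group, and $G/\Phi(G)$ can be far from abelian. As written, then, your argument (like the paper's) is complete only for groups with abelian Frattini quotient; the general statement still needs a proof that the three products, not merely the three joins, equal $G$ --- or a different choice of the four subgroups for which the set products can be controlled.
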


\begin{proof}
Let $\Gamma(G)$ be a cograph. If possible let, $G=\langle x, y, z\rangle$ and $H_{1}=\langle x \rangle, H_{2}=\langle y \rangle, H_{3}=\langle z \rangle$. Then $\Gamma(G)$ contains a $4$-vertex induced path $\langle z \rangle \sim \langle x, y \rangle \sim \langle y, z \rangle \sim \langle x \rangle$. This leads to a contradiction that $\Gamma(G)$ is a cograph. Hence, $G$ must be $2$-generated.
\end{proof}
\medskip 
The converse of the above proposition is not true in general (see the generalized quaternion group). So, the question is for which cases the converse hold?
A group is called an EPO-group if every non-identity elements of the group are of prime order. We now show that the converse of the above proposition is also holds for an EPO-group.
\begin{theorem}
Let $G$ be an EPO-group. Then $\Gamma(G)$ is a cograph if and only if $G$ is $2$-generated.
\end{theorem}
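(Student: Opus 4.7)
The forward implication $(\Rightarrow)$ follows at once from Proposition~\ref{prop_cograph}: any non-cyclic group whose co-maximal subgroup graph is a cograph is already $2$-generated, and cyclic groups are trivially $1$-generated. So the real content lies in the converse.

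For $(\Leftarrow)$, assume $G$ is a $2$-generated EPO-group and suppose, toward contradiction, that $\Gamma(G)$ carries an induced path $H_1 \sim H_2 \sim H_3 \sim H_4$. My plan is to reverse the construction used in Proposition~\ref{prop_cograph}: produce three elements $x, y, z \in G$ that form a minimum generating set of $G$, forcing $d(G) \ge 3$ and contradicting the hypothesis. Concretely, using the properness of the $H_i$'s together with the edges of the path, I would select $x \in H_1 \setminus H_2$ and $z \in H_4 \setminus H_3$---both of prime order by EPO---and then extract a third element $y$ from the middle edge $H_2 \sim H_3$, choosing it either inside $H_2 \cap H_3$ when that intersection is non-trivial, or in $H_2 \cup H_3$ otherwise. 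The three non-adjacencies $H_1 \nsim H_3$, $H_1 \nsim H_4$, $H_2 \nsim H_4$ should translate into $\langle x,y\rangle, \langle y,z\rangle, \langle x,z\rangle$ all being proper, while the three path edges yield $\langle x,y,z\rangle = G$. The EPO hypothesis is decisive here because every selected element has prime order, so $\langle x\rangle, \langle y\rangle, \langle z\rangle$ are atomic copies of $C_p$ and cannot silently collapse into one another.

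The main obstacle will be making the choice of the middle element $y$ rigorous, particularly when $H_2 \cap H_3$ is trivial (for instance in $S_3$ with $H_2 \cong C_3$ and $H_3$ a $C_2$). I expect this forces a split along the classification of finite EPO-groups: namely $p$-groups of exponent $p$, Frobenius groups $N \rtimes Q$ with $N$ elementary abelian and $Q$ of prime order, and the exceptional case $A_5$. The $A_5$ case admits a direct verification: an adjacency count shows that the non-isolated portion of $\Gamma(A_5)$ is the complete bipartite graph $K_{12,5}$, which is patently $P_4$-free. The Frobenius and $p$-group cases should yield to the element-extraction argument above, using the tight control afforded by Frattini quotients, Sylow rigidity, and the fixed-point-free action of the complement on the Frobenius kernel.
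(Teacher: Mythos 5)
Your forward direction coincides with the paper's (both are immediate from Proposition~\ref{prop_cograph}), so everything rests on the converse, and there your proposal is not yet a proof. The element-extraction mechanism fails on two counts. First, non-adjacency in $\Gamma(G)$ only says that the \emph{product set} $H_iH_j$ is not all of $G$; when $H_iH_j$ is not a subgroup this places no bound on $\langle H_i,H_j\rangle$, let alone on $\langle x,z\rangle$ (already in $S_3$ one has $\langle(12)\rangle\langle(13)\rangle\neq S_3$ while $\langle(12),(13)\rangle=S_3$), so the three non-edges of the path do not make $\langle x,y\rangle$, $\langle y,z\rangle$, $\langle x,z\rangle$ proper. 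Second, even granting that, an irredundant generating set of size $3$ does not force $d(G)\geq 3$: the Burnside basis theorem gives this for $p$-groups, but not for your Frobenius groups or for $A_5$ --- the three transpositions $(12),(23),(34)$ generate the $2$-generated group $S_4$ while no two of them do --- so no contradiction with $2$-generation would follow in exactly the cases where you invoke the argument. (The paper's converse instead writes $G=\langle x,y\rangle$ with $o(x)=p$, $o(y)=q$ and treats $p=q$ and $p\neq q$ directly, reducing to $C_p\times C_p$ and to a group with normal Sylow $p$-subgroup.)

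Your fallback of running through the classification of EPO-groups is in principle more careful than the paper's reduction, and your $A_5$ verification is correct: every $S_3$ meets every $D_5$ in an involution and is isolated, and the non-isolated part of $\Gamma(A_5)$ is $K_{5,12}$ (the five copies of $A_4$ against the twelve subgroups of order $5$ or $10$), which is $P_4$-free. But the case you defer entirely to the broken extraction argument --- $p$-groups of exponent $p$ --- is precisely where the danger lies. The paper disposes of it by asserting that a $2$-generated EPO $p$-group is $C_p\times C_p$, which is false: the extraspecial group $G$ of order $p^3$ and exponent $p$ ($p$ odd) is $2$-generated and EPO. Writing $G=\langle a,b\rangle$ with $c=[a,b]$ central, the subgroups $\langle a\rangle,\ \langle b,c\rangle,\ \langle a,c\rangle,\ \langle b\rangle$ satisfy $\langle a\rangle\langle b,c\rangle=\langle b,c\rangle\langle a,c\rangle=\langle a,c\rangle\langle b\rangle=G$, while the product sets of the remaining three pairs have fewer than $p^3$ elements; under the product-set adjacency of the cited literature this is an induced $P_4$. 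So either the adjacency must be read as $\langle H,K\rangle=G$ (which inserts the chord $\langle a\rangle\sim\langle b\rangle$ and makes the graph complete multipartite plus an isolated vertex), or this case refutes the statement. Until you fix the convention and settle the exponent-$p$ case by an honest argument, the converse remains open.
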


\begin{proof}
Suppose $G$ is an EPO-group such that $\Gamma(G)$ is a cograph. Clearly by Proposition \ref{prop_cograph}, $G$ must be $2$-generated.\\
Conversely, let $G$ be a group generated by two elements say $x, y$ with $o(x)=p, o(y)=q$. Let us consider two cases.\\
\textbf{Case 1.} $p=q$.\\
Then $G$ is the group $C_{p}\times C_{p}$. Let us assume that $\Gamma(G)$ contains a path $A\sim B\sim C\sim D$. Now every non-trivial subgroups of $G$ are of order $p$ and hence any two such subgroups generate $G$. Thus, in the $4$-vertex path we obtain $A\sim C$. This implies $\Gamma(G)$ does not contain any $4$-vertex induced path. So, in this case $\Gamma(G)$ is a cograph.\\
\textbf{Case 2.} $p\neq q$.\\
Let $p>q$. Then $G$ is the group $C_{p}\rtimes C_{q}=\langle x, y \rangle$, where $o(x)=p, o(y)=q$. Now the Sylow $p$-subgroup $\langle x \rangle$ of $G$ is normal. So, if $\Gamma(G)$ contains a path say $A\sim B\sim C\sim D$ then the path is not an induced path. This implies that $\Gamma(G)$ is a cograph. Similarly, if $p<q$ then $\Gamma(G)$ is a cograph. 
Therefore, if $G$ is a $2$-generated EPO-group then $\Gamma(G)$ is a cograph.
\end{proof}

\begin{theorem}
\label{Comax_co_p}
Let $G$ be a finite abelian group of prime power order. Then $\Gamma(G)$ is a cograph if and only if $G$ is either a cyclic group of prime power order or $C_{p^k}\times C_{p^r}$ (where $k, r \geq 1$).
\end{theorem}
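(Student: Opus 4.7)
My plan is to prove the two directions separately, leveraging Proposition \ref{prop_cograph} for the forward implication. Suppose first that $\Gamma(G)$ is a cograph with $G$ an abelian $p$-group. If $G$ is cyclic, there is nothing more to show; otherwise Proposition \ref{prop_cograph} forces $G$ to be $2$-generated. Since by the fundamental theorem of finite abelian groups a $2$-generated non-cyclic abelian $p$-group must have rank exactly $2$, we obtain $G \cong C_{p^k} \times C_{p^r}$ with $k, r \ge 1$, as required.

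For the backward direction, the cyclic case is immediate: for $G = C_{p^n}$ the subgroup lattice is a chain, so $HK = \max(H,K) \ne G$ for every pair of proper non-trivial subgroups, and $\Gamma(G)$ is edgeless (hence trivially a cograph). The interesting case is $G = C_{p^k} \times C_{p^r}$, which I would analyze through the Frattini quotient $G/\Phi(G) \cong \mathbb{F}_p^2$. Writing $\pi$ for the quotient map, I would first check that every proper subgroup $H \le G$ satisfies either $\pi(H) = \{0\}$ (equivalently $H \subseteq \Phi(G)$) or $\pi(H)$ is a $1$-dimensional subspace of $\mathbb{F}_p^2$, since $\pi(H) = \mathbb{F}_p^2$ would give $H\Phi(G) = G$ and hence $H = G$ by the non-generator property of $\Phi(G)$. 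I would then establish the key equivalence: for proper non-trivial $H, K$, $HK = G$ if and only if $\pi(H)$ and $\pi(K)$ are distinct $1$-dimensional subspaces of $\mathbb{F}_p^2$.

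Granting this equivalence, $\Gamma(G)$ decomposes as follows: the subgroups contained in $\Phi(G)$ are isolated vertices, and the remaining vertices form a complete multipartite graph with at most $p+1$ parts indexed by the $1$-dimensional subspaces of $\mathbb{F}_p^2$. Since complete multipartite graphs are $P_4$-free---an induced $P_4$ would force its two non-adjacent endpoints, and then the interior vertices, into a single part, contradicting the edge between the interior vertices---and adding isolated vertices preserves the cograph property, $\Gamma(G)$ is a cograph. The main obstacle I anticipate is establishing the adjacency equivalence $HK = G \iff \pi(H) + \pi(K) = \mathbb{F}_p^2$, whose nontrivial direction rests on the non-generator characterization of $\Phi(G)$ applied to $L = HK$; the rest is routine bookkeeping with abelian $p$-groups together with the folklore fact that complete multipartite graphs are cographs.
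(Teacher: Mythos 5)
Your proposal is correct, and while the forward direction coincides with the paper's (both invoke Proposition \ref{prop_cograph} and then the structure theorem for finite abelian $p$-groups to pin down $G \cong C_{p^k}\times C_{p^r}$), your converse is a genuinely different and in fact sharper argument. The paper handles $C_{p^k}\times C_{p^r}$ by an ad hoc case analysis on the isomorphism ``shapes'' of the four vertices of a putative induced $P_4$ (e.g.\ writing $H_1 = C_{p^k}\times C_{p^l}$ and arguing $H_1 \sim H_4$), which tacitly treats subgroups as coordinate subgroups and never isolates the actual adjacency criterion. Your route through the Frattini quotient does isolate it: $H\sim K$ if and only if $\pi(H)$ and $\pi(K)$ are distinct lines in $G/\Phi(G)\cong \mathbb{F}_p^2$, with the nontrivial direction following from the non-generator property applied to the subgroup $HK$ (note $HK$ is a subgroup since $G$ is abelian, so $HK\,\Phi(G)=G$ forces $HK=G$). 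This identifies $\Gamma(G)$ exactly as a complete multipartite graph on at most $p+1$ parts together with isolated vertices, from which $P_4$-freeness is immediate; as a bonus it also shows the graph is complete multipartite plus isolated vertices, which is strictly more information than cograph-hood and would, for instance, let you read off chordality or thresholdness of these graphs directly. The only point worth making explicit if you write this up is the verification that subgroups contained in $\Phi(G)$ really are isolated (i.e.\ $HK\subseteq K\Phi(G)\subsetneq G$ for $K$ proper), which you have implicitly but should state. Both proofs rely on Proposition \ref{prop_cograph} as a black box for the forward direction, so neither is more self-contained there.
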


\begin{proof}
Let $p$ be a prime that divides $|G|$ and $G\cong C_{p^{r_1}}\times C_{p^{r_2}}\times \cdots \times C_{p^{r_k}}$, where $r_{i}\geq 1$ for all $i=1, 2, \cdots, k$. Let $\Gamma(G)$ be a cograph. Then by Proposition \ref{prop_cograph}, $G$ is either cyclic or a $2$-generated abelian $p$-group. Hence, $G$ is either a cyclic group of prime power order or $G \cong C_{p^{r_1}}\times C_{p^{r_2}}$, where $r_{1}, r_{2}\geq 1$.\\
Converse Part: \\
If $G$ is a cyclic group of prime power order then clearly $\Gamma(G)$ is a cograph.\\
Let $G \cong C_{p^k}\times C_{p^r}$ (where $k, r \geq 1$). Let $x, y$ be two generators of $G$. Suppose, $\Gamma(G)$ contains a $4$-vertex induced path $H_{1}\sim H_{2}\sim H_{3}\sim H_{4}$. First we observe that any two subgroups of $G$ of the form $C_{p^{k_1}}\times C_{p^{r_1}}$ and $C_{p^{k_2}}\times C_{p^{r_2}}$, where $k_1, k_2<k, r_1, r_2<r$, are not adjacent. Because, let the first one is $H$ and the second one is $K$ then $H\cap K$ is non-trivial and $HK\subset G$.\\
Without loss of generality, let $H_{1}=C_{p^k}\times C_{p^l} (l\geq 0)$. Then, $H_{2}, H_{3}$ and $H_{4}$ will be $C_{p^m}\times C_{p^r}$, $C_{p^k}\times C_{p^n}$ and $C_{p^s}\times C_{p^r}$ respectively, where $m, n, s \geq 0$. But then $H_{1}\sim H_{4}$. Thus, such induced path does not exist. Hence, $\Gamma(G)$ is a cograph.
\end{proof}

\begin{theorem}
\label{Comax_nil_co}
Let $G$ be a finite nilpotent group such that $|\pi(G)| \geq 2$. Then $\Gamma(G)$ is a cograph if and only if $G$ is a cyclic group $C_{p^{a}q^{b}}$ (where $a, b \geq 1$), where $p, q \in \pi(G)$ are distinct primes. 
\end{theorem}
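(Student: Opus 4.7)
The plan is to verify both directions by a direct analysis of $\Gamma(G)$.

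For the forward direction I would write $G = P_1 \times \cdots \times P_r$ with $r = |\pi(G)| \geq 2$ and first rule out $r \geq 3$ by exhibiting the induced four-vertex path
\[
P_1 \;\sim\; P_2 \times P_3 \times \cdots \times P_r \;\sim\; P_1 \times P_3 \times \cdots \times P_r \;\sim\; P_2.
\]
Each consecutive product fills in all $r$ Sylow factors and hence equals $G$, whereas the three non-consecutive products omit the factors $P_2$, $P_1$, and $P_3$ respectively and are therefore proper. This forbidden induced $P_4$ contradicts the cograph hypothesis, so $r = 2$.

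With $r = 2$ and $G = P \times Q$, I would next show that both Sylow subgroups are cyclic. If not, by the symmetry between $p$ and $q$ assume $P$ is non-cyclic; then $P$ has at least two distinct maximal subgroups $M_1 \neq M_2$, each a non-trivial proper subgroup of $P$ (since $|P| \geq p^2$). I would then exhibit the path
\[
Q \;\sim\; P \;\sim\; (M_1 \times Q) \;\sim\; M_2,
\]
where the three consecutive products equal $G$ (the last one using $M_1 M_2 = P$, which holds because $|M_1||M_2|/|M_1 \cap M_2| \geq p^{2(a-1)}/p^{a-2} = p^a = |P|$), while the non-consecutive products $Q \cdot (M_1 \times Q) = M_1 \times Q$, $P \cdot M_2 = P$, and $Q \cdot M_2 = M_2 \times Q$ are all proper in $G$. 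This is again an induced $P_4$, a contradiction; hence $P$ is cyclic, and by the same argument with the roles of $P$ and $Q$ swapped, $Q$ is cyclic too, giving $G \cong C_{p^a} \times C_{q^b} \cong C_{p^a q^b}$.

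For the converse, assume $G = C_{p^a q^b}$. The non-trivial proper subgroups are parametrised by pairs $(i,j)$ with $0 \leq i \leq a$, $0 \leq j \leq b$, $(i,j) \notin \{(0,0),(a,b)\}$, where $(i,j)$ denotes the unique subgroup of order $p^i q^j$; two such subgroups are adjacent iff $\max(i_1,i_2) = a$ and $\max(j_1,j_2) = b$. I would partition the vertex set into Type~I ($i = a$, $j < b$), Type~II ($i < a$, $j = b$), and Type~III ($i < a$, $j < b$). A short case check then yields: Type~I is an independent set, Type~II is an independent set, Type~III consists of isolated vertices, and every Type~I vertex is joined to every Type~II vertex. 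Hence $\Gamma(G)$ is a complete bipartite graph with some isolated vertices appended, which is $P_4$-free and therefore a cograph.

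The main obstacle is the $r=2$, non-cyclic case of the forward direction: the four chosen vertices must realise three adjacencies and three non-adjacencies simultaneously. The useful trick is to place the whole Sylow $q$-subgroup $Q$ at one endpoint, because $Q$ is adjacent only to subgroups whose projection to $P$ equals $P$; then inserting $M_1 \times Q$ at the third slot and $M_2 \leq P$ at the far endpoint automatically severs the long-range adjacencies from $Q$.
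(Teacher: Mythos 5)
Your proof is correct and follows the same overall strategy as the paper: exhibit induced four-vertex paths to force $|\pi(G)|=2$ and both Sylow subgroups cyclic, then analyse the subgroup lattice of $C_{p^aq^b}$ directly for the converse. Two of your choices are refinements worth noting. In the non-cyclic Sylow case the paper uses the path $P' \sim \langle a\rangle\times Q \sim P\times\langle b\rangle \sim Q$ with $o(b)=q$, which degenerates when $|Q|=q$ (there $P\times\langle b\rangle=G$ is not a vertex); your path $Q\sim P\sim (M_1\times Q)\sim M_2$ through two distinct maximal subgroups of $P$ works uniformly for all $b\geq 1$, using only that $M_1M_2=P$. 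For the converse the paper checks case by case that no induced path $A\sim B\sim C\sim D$ can exist, whereas you identify $\Gamma(C_{p^aq^b})$ globally as a complete bipartite graph (the subgroups of order $p^aq^j$, $j<b$, versus those of order $p^iq^b$, $i<a$) together with isolated vertices of order $p^iq^j$ with $i<a$, $j<b$; this packages the same adjacency computation ($H\sim K$ iff the exponents of $p$ and of $q$ each attain their maxima) into a structural statement from which $P_4$-freeness is immediate. Both directions of your argument are complete.
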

\begin{proof}
Let $G$ be a finite nilpotent group and $|\pi(G)| \geq 3$. Let $p, q \in \pi(G)$ be two distinct primes. Now $G$ has at least two maximal subgroups say $M, N$ with $[G:M]=p$ and $[G:N]=q$. Suppose, $P, Q$ are Sylow $p$- and $q$-subgroups of $G$. Then $\Gamma(G)$ contains a $4$-vertex path $P\sim M\sim N\sim Q$. Hence, $|\pi(G)|=2$. \\
Let $G=P\times Q$ such that $|P|=p^{a}, |Q|=q^{b}$. Each non-trivial subgroup of $G$ is of order either a power of $p$ or a power of $q$ or of the form $p^{i}q^{j}$, where $1\leq i \leq a, 1\leq j \leq b$ and $(i, j)\neq (a, b)$. \\
Firstly, we claim that the Sylow $p$-subgroup $P$ of $G$ must be cyclic. Contrarily, let $P$ be non-cyclic. Then there exist a subgroup say $P'$ of order $p^{a-1}$ and an element $a$ of order $p$ such that $a \notin P$. Then $\Gamma(G)$ contains a path $P' \sim \langle a \rangle \times Q \sim P\times \langle b \rangle \sim Q$, where $o(b)=q$. Thus $P$ must be cyclic. \\
Using the similar argument we conclude that $Q$, the Sylow $q$-subgroup of $G$, is also cyclic. Therefore $G$ must be a cyclic group $C_{p^{a}q^{b}}$ (where $a, b \geq 1$).\\
\textbf{Converse Part:}\\
Let $G\cong C_{p^{a}q^{b}}$ ($a, b \geq 1$), where $p, q$ are two distinct prime divisors of $|G|$. Any non-trivial subgroups of $G$ are of orders either power of $p$ or power of $q$ or of the form $p^{i}q^{j}$, where $1\leq i \leq a, 1\leq j \leq b$ and $(i, j)\neq (a, b)$. If possible let $\Gamma(G)$ contain a path $A, B, C, D$.\\
a) Let $B \cong C_{p^k}$ ($k\leq a$). If $k<a$ then  $A, C$ must be $C_{p^{a-k}q^{b}}$, which is not possible. If $k=a$ then one of $A, C$ must be $C_{q^b}$ and the other is $C_{p^{a-1}q^{b}}$. Without loss of generality, let $A\cong C_{q^b}$ and $C\cong  C_{p^{a-1}q^{b}}$. Then $D$ has the only possibility $C_{p^{a}q^{b-1}}$. Thus $A\sim D$. \\
b) If $B$ is a power of $q$ then just as done in a) we get that there does not exist any $4$-vertex induced path in $\Gamma(G)$.\\
c) Let $B \cong C_{p^{a}q^{b-1}}$. Without loss of generality, let $A \cong C_{q^b}$ and $C\cong C_{p^{a-1}q^{b}}$. Then $D$ must be $C_{p^a}$. But in this case $A\sim D$.\\
Thus in all cases we reach at the conclusion that $\Gamma(G)$ does not contain a $4$-vertex induced path. Therefore, $\Gamma(G)$ is a cograph.\\
Other cases are obvious.
\end{proof}
 
From the above Proposition \ref{prop_cograph} and Theorem \ref{Comax_nil_co} we obtain the following corollary.
\begin{corollary}
Let $G$ be a finite abelian group. Then $\Gamma(G)$ is a cograph if and only if $G$ is either a cyclic group of prime power order or an abelian group $C_{p^k}\times C_{p^r}$ (where, $k, r \geq 1$) or a cyclic group $C_{p^{a}q^{b}}$ (where, $a, b \geq 1$). Provided, $p, q$ are distinct primes.
\end{corollary}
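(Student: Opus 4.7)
The plan is to derive this corollary directly by combining the prior results, since every finite abelian group is nilpotent and thus falls under the scope of the nilpotent characterisations already established. First I would split according to the cardinality of $\pi(G)$, handling the prime-power and multi-prime cases separately.

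In the case $|\pi(G)|=1$, the group $G$ is an abelian $p$-group, so Theorem \ref{Comax_co_p} applies verbatim and forces $G$ to be either a cyclic $p$-group or $C_{p^k}\times C_{p^r}$ with $k,r\geq 1$, and conversely these groups yield a cograph. In the case $|\pi(G)|\geq 2$, the group $G$ is nilpotent with at least two distinct primes dividing its order, so Theorem \ref{Comax_nil_co} applies and forces $G\cong C_{p^{a}q^{b}}$ with $a,b\geq 1$ for two distinct primes $p,q$; the converse direction is likewise supplied by the same theorem.

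Assembling the two cases yields the three possibilities listed in the corollary, and the converse in each case is immediate from the converse halves of the invoked theorems. The reason this short argument suffices is that the only nontrivial structural input, namely that a non-cyclic group whose co-maximal subgroup graph is a cograph must be $2$-generated (Proposition \ref{prop_cograph}), has already been absorbed into Theorems \ref{Comax_co_p} and \ref{Comax_nil_co}; no new case analysis is required.

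I do not anticipate a genuine obstacle here, since the statement is a corollary rather than an independent theorem. The only minor thing to be careful about is to verify that the three families in the conclusion are pairwise disjoint in their natural description (a cyclic $p$-group is the special case $r=0$ of $C_{p^k}\times C_{p^r}$ only if one allows $r=0$, which the statement does not, so these are genuinely distinct classes), and to note explicitly that abelian implies nilpotent so that Theorem \ref{Comax_nil_co} is applicable. Beyond this bookkeeping, the proof is a one-line appeal to the two theorems.
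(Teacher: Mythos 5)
Your proposal is correct and follows essentially the same route as the paper: the paper derives the corollary by combining Proposition \ref{prop_cograph} with Theorem \ref{Comax_nil_co}, and your case split on $|\pi(G)|$ using Theorem \ref{Comax_co_p} (itself a consequence of that proposition) for the prime-power case and Theorem \ref{Comax_nil_co} for the multi-prime case is the same argument with the intermediate theorem cited explicitly. No gap.
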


\begin{proposition}
Let $G$ be a group of order $pq$, where $p\neq q$ are two primes. Then $\Gamma(G)$ is a cograph.
\end{proposition}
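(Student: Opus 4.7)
The plan is to split into two cases according to whether $G$ is cyclic or non-abelian, since by Sylow's theorems these are the only possibilities for a group of order $pq$ (the unique Sylow $p$-subgroup for the larger prime $p$ forces this dichotomy).

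If $G$ is cyclic, that is $G \cong C_{pq}$, then $G$ is a finite nilpotent group with $|\pi(G)| = 2$ and exponents $a = b = 1$ in the notation of Theorem \ref{Comax_nil_co}. Applying that theorem directly gives that $\Gamma(G)$ is a cograph. (Alternatively, one can note that $C_{pq}$ has only two non-trivial proper subgroups, so $\Gamma(G)$ has two vertices and obviously contains no induced $P_4$.)

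If $G$ is non-abelian, then (without loss of generality $p > q$) the Sylow $p$-subgroup $P$ is unique and normal, while the number of Sylow $q$-subgroups equals $p$. Thus the non-trivial proper subgroups of $G$ are exactly $P$ together with the $p$ Sylow $q$-subgroups $Q_1, \ldots, Q_p$. The key computation is that for any two distinct Sylow $q$-subgroups $Q_i, Q_j$ one has $|Q_i Q_j| \le q^2 < pq$, so they are not adjacent in $\Gamma(G)$; whereas for any $i$, $P \cap Q_i = \{e\}$ gives $|PQ_i| = pq$, so $P \sim Q_i$. Hence $\Gamma(G)$ is the star $K_{1,p}$, which has diameter at most $2$ and therefore contains no induced $P_4$.

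Putting the two cases together yields the proposition. The main (and only) step that requires care is the case analysis for the non-abelian group: identifying the full list of non-trivial proper subgroups via Sylow's theorem and then checking adjacency via the standard product formula $|HK| = |H||K|/|H\cap K|$. Everything else reduces either to an already proved theorem or to the observation that a star graph is a cograph.
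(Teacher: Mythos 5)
Your proof is correct and follows essentially the same route as the paper: the cyclic case is delegated to Theorem \ref{Comax_nil_co}, and in the non-abelian case both arguments come down to the observation that the only non-trivial proper subgroups are the unique normal Sylow $p$-subgroup $P$ and the $p$ conjugate Sylow $q$-subgroups, with adjacency occurring only between $P$ and the $Q_i$'s. One small caution: ``diameter at most $2$'' does not by itself rule out an induced $P_4$ (the $5$-cycle $C_5$ has diameter $2$), but your identification of $\Gamma(G)$ as the star $K_{1,p}$ makes the absence of any $P_4$ immediate anyway, so the conclusion stands.
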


\begin{proof}
Since $p, q$ are two primes without loss of generality we assume that $p>q$. Let $P$ be Sylow $p$-subgroup of $G$. Since, $p>q$, the Sylow $p$-subgroup $P$ of $G$ is normal. Now we consider here two cases based on the normality of Sylow $q$-subgroup of $G$. \\
Case 1. Let the Sylow $q$-subgroup of $G$ be normal.\\
In this case $G$ is the nilpotent group. Hence, $G\cong P\times Q$ (where, $Q$ is the Sylow $q$-subgroup of $G$) i.e., $G\cong C_{pq}$. Therefore by Theorem \ref{Comax_nil_co} $\Gamma(G)$ is a cograph.\\
Case 2. Let Sylow $q$-subgroup of $G$ be not normal.\\
Here $G$ has $p$ distinct subgroups of order $q$ say $Q_{1}, Q_{2}, \cdots, Q_{p}$. In this case we show that $\Gamma(G)$ is a cograph. For the sake of contradiction, let $\Gamma(G)$ contains a $4$-vertex induced path say $A\sim B\sim C\sim D$. Since the non-trivial subgroups of $G$ must be of order either $p$ or $q$ so first consider $o(A)=p$ i.e., $A\cong P$ . Then $B\cong Q_{1}$. This implies $o(C)$ must be $p$. But as $G$ has unique subgroup of order $p$ so $C=A$. Hence this path is not possible.\\
On the other hand, if $o(A)=q$ then $B\cong P$ and $o(C)=q$ also. Let $A\cong Q_{1}$ and $C\cong Q_{2}$. But then $D\cong P$ (as o(D)=p and $G$ contains unique subgroup of order $p$). Thus in any cases such a $4$-vertex induced path is not possible. Hence, $\Gamma(G)$ is a cograph.
\end{proof}

\begin{theorem}
Let $G$ be a group of order $p^{2}q$. Then $\Gamma(G)$ is a cograph if and only if $G$ is either $C_{3}\rtimes C_{4}$ or $A_{4}$ or $C_{q}\rtimes C_{p^2}$ or $C_{p^2}\rtimes C_{q}$ or a group whose every non-trivial subgroups are EPPO group.
\end{theorem}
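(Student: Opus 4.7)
The plan is Sylow-theoretic case analysis, using Proposition~\ref{prop_cograph} as the main reduction tool. Since $|G|=p^{2}q$, a standard element-counting argument forces at least one Sylow subgroup to be normal (ruling out the simultaneous occurrence of $n_{p}=q$ and $n_{q}=p^{2}$), so $G\cong P\rtimes C_{q}$ or $G\cong C_{q}\rtimes P$ where $P$ is a Sylow $p$-subgroup, either $C_{p^{2}}$ or $C_{p}\times C_{p}$. Furthermore, if $\Gamma(G)$ is a cograph and $G$ is non-cyclic, then $G$ must be $2$-generated by Proposition~\ref{prop_cograph}.

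For the forward direction I treat each structural type. The cyclic case $G\cong C_{p^{2}q}$ already has cograph $\Gamma$ by Theorem~\ref{Comax_nil_co} and is absorbed into the listed semidirect-product classes with trivial action. The non-cyclic abelian case $G\cong C_{p}\times C_{p}\times C_{q}$ is excluded by the induced $P_{4}$ $\langle a_{1}\rangle\sim\langle a_{2},c\rangle\sim\langle a_{1},c\rangle\sim\langle a_{2}\rangle$, where $a_{1},a_{2}$ generate the two $C_{p}$-factors and $c$ generates $C_{q}$; the cardinalities of $H_{i}H_{j}$ are checked directly. Non-abelian $G$ with $P=C_{p^{2}}$ fits immediately into the classes $C_{p^{2}}\rtimes C_{q}$ or $C_{q}\rtimes C_{p^{2}}$. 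When $P=C_{p}\times C_{p}$, the outcome is governed by the action: a non-trivial reducible action with a $C_{q}$-fixed line produces an induced $P_{4}$ $\langle a\rangle\sim\langle b,c\rangle\sim\langle a,c\rangle\sim\langle b\rangle$ built from complementary invariant lines $\langle a\rangle,\langle b\rangle$ of $P$; an irreducible action, by the geometric-sum identity $\sum_{m=0}^{q-1}k^{m}=0$ in $\mathbb{F}_{p}$ for $k$ of order $q$, makes every element $(v,c^{l})$ with $l\ne 0$ have order $q$, so $G$ is an EPPO group lying in the final listed class (specializing to $A_{4}$ when $(p,q)=(2,3)$). The mirror case $C_{q}\rtimes(C_{p}\times C_{p})$ is handled analogously.

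For the backward direction I verify each listed class separately. For $Q_{12}=C_{3}\rtimes C_{4}$, enumerating the six non-trivial proper subgroups (the central $C_{2}$, the unique $C_{3}$ and $C_{6}$, and three conjugate Sylow $C_{4}$'s) gives $\Gamma(Q_{12})\cong K_{2,3}\sqcup K_{1}$; for $A_{4}$, the three subgroups of order $2$, the four $C_{3}$'s, and the unique $V_{4}$ give $\Gamma(A_{4})\cong K_{1,4}\sqcup 3K_{1}$; both are clearly cographs. For $C_{p^{2}}\rtimes C_{q}$ and $C_{q}\rtimes C_{p^{2}}$, the cyclic Sylow $p$-structure confines the subgroup lattice, and a pairwise check on candidate $4$-paths rules out any induced $P_{4}$. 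For the EPPO class (the irreducible case of $(C_{p}\times C_{p})\rtimes C_{q}$), one shows that $G$ has no proper subgroup of order $pq$: no order-$p$ subgroup of $P$ is $C_{q}$-invariant, so no Sylow $q$-subgroup normalises any such line. Consequently $\Gamma(G)$ reduces to the star joining $P$ to every Sylow $q$-subgroup together with the $p+1$ isolated $C_{p}$-lines in $P$, which is a cograph.

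The main obstacle is the delicate dichotomy in the $(C_{p}\times C_{p})\rtimes C_{q}$ case: one must cleanly separate reducible actions (whose richer subgroup lattice supplies the explicit complementary-line $P_{4}$) from irreducible ones (whose sparse lattice---no subgroups of order $pq$---forces $\Gamma(G)$ into a star-plus-isolated structure). Producing the correct $P_{4}$ witness in every sub-case of reducible non-trivial action, and cross-checking that the irreducible case indeed yields the EPPO-type star, is where the argument requires the greatest care.
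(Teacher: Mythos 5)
Your overall strategy is sound and in places cleaner than the paper's: where the paper splits on $p>q$ versus $p<q$ and then on Sylow counts, using the single key witness $P_1\sim K\sim P\sim Q$ built from any subgroup $K$ of order $pq$ together with a non-cyclic Sylow $p$-subgroup, you organize the elementary-abelian case by the $\mathbb{F}_p[C_q]$-module structure of $P$ and use the witness $\langle a\rangle\sim\langle b,c\rangle\sim\langle a,c\rangle\sim\langle b\rangle$ on a pair of complementary invariant lines. The two dichotomies are equivalent (for $P\trianglelefteq G$ elementary abelian, a subgroup of order $pq$ exists iff the $C_q$-action on $P$ is reducible), your explicit descriptions $\Gamma(C_3\rtimes C_4)\cong K_{2,3}\sqcup K_1$ and $\Gamma(A_4)\cong K_{1,4}\sqcup 3K_1$ check out, and your forward direction is essentially correct.

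The genuine gap is in the backward direction for the class ``every non-trivial subgroup is an EPPO group,'' and your own forward direction exposes it. Take $q\mid p-1$ and let $C_q$ act on $P=C_p\times C_p$ reducibly but non-trivially on \emph{both} invariant lines; concretely, $G=(C_7\times C_7)\rtimes C_3$ with $c$ acting as $\mathrm{diag}(2,4)$. By exactly your geometric-sum identity, every element outside $P$ has order $q$, so $G$ and all of its subgroups are EPPO (the order-$pq$ subgroups are nonabelian Frobenius groups), and $G$ therefore belongs to the last listed class. Yet your own witness $\langle a\rangle\sim\langle b,c\rangle\sim\langle a,c\rangle\sim\langle b\rangle$ is an induced $P_4$ in $\Gamma(G)$. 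Thus the backward implication as literally stated is false, and your converse argument quietly replaces the EPPO class by the strictly smaller class ``$G$ has no subgroup of order $pq$'' (your irreducible case), which is the condition that actually works. You should make that substitution explicit rather than equating the two. To be fair, the paper's proof carries the identical defect: its forward direction derives ``no subgroup of order $pq$'' and then mislabels it as the EPPO condition, and its converse part (c) dismisses that class as obvious. With the class restated as ``no subgroup of order $pq$'' (equivalently, $P=C_p\times C_p$ normal with irreducible $C_q$-action), your argument goes through.
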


\begin{proof}
Let $G$ be a group of order $p^{2}q$ and $P$ be the Sylow $p$-subgroup of $G$.
Since $p, q$ are distinct primes we consider two cases.\\
Case 1. $p>q$.\\
Since $p>q$ so it is clear that the Sylow $p$-subgroup $P$ of $G$ must be normal.
Let $G$ contains a subgroup of order $pq$ say $K$. Now we claim that $P$ must be cyclic. Elsewhere, $P$ contains atleast two subgroups of order $p$ say $P_{1}, P_{2}$. Then $\Gamma(G)$ contains the path $P_{1}\sim K\sim P\sim Q$, where $P_{2}\subset K$ and $Q$ is the Sylow $q$-subgroup of $G$. Hence, $P\cong C_{p^2}$. Thus $G$ is either $C_{p^2}\times C_{q}$ or $C_{p^2}\rtimes C_{q}$.\\
Next let $G$ does not contain any subgroup of order $pq$. Then each subgroup of $G$ is of order either $p$ or $p^{2}$ or $q$. But a subgroup of order $q$ is adjacent to the subgroup $P$ and a subgroup of order $p$ is neither adjacent to $P$ nor adjacent to a subgroup of order $q$. Thus in any $4$-vertex path only $P$ and the subgroups of order $q$ will appear. But as $P$ is unique so such $4$-vertex induced path is not possible.\\
Therefore, in this case we get $G$ is either $C_{p^2}\times C_{q}$ or $C_{p^2}\rtimes C_{q}$ or a group whose every non-trivial subgroups are EPPO group.
Case 2. $p<q$.\\
 Let $n_{p}, n_{q}$ be the number of Sylow $p$-subgroups and Sylow $q$-subgroups of $G$. Now $n_{p}\in \{1, q\}$ and $n_{q}\in \{1, p, p^{2}\}$. Since $p<q$ so $n_{q}\neq p$. Thus $n_{q}\in \{1, p^{2}\}$. Also one can observe that $(n_{p}, n_{q})\neq (q, p^{2})$. \\
i) If both $n_{p}=n_{q}=1$ then $G$ is nilpotent. So $G\cong P\times Q$, where $Q$ is the Sylow $q$-subgroup of $G$. We now claim that $P$ must be cyclic. Otherwise $P$ contains at least two subgroups say $P_{1}, P_{2}$ of order $p$. As $G$ has a subgroup of order $pq$ then $\Gamma(G)$ contains a $4$-vertex induced path $P_{1}\sim K\sim P\sim Q$, where $P_{2}\subset K$. Hence $G\cong C_{p^2}\times C_{q}$. \\
ii) Let $n_{q}=1, n_{p}=q$. In this case just following the similar argument as done in above case we can conclude that if $G$ contains a subgroup of order $pq$ then $G$ must be $C_{q}\rtimes C_{p^2}$ or $G$ is a group whose every non-trivial subgroups must be an EPPO group.\\
iii) Let $n_{p}=1, n_{q}=p^{2}$. $n_{q}=p^2$ gives $q|(p^{2}-1)$. This implies $n_{q}|(p+1)$ (as $p<q$). Hence $q=p+1$ and so $p=2, q=3$. Therefore $G$ is the group of order $12$. Thus $G$ is either $A_{4}$ or $C_{3}\rtimes C_{4}$ or $D_{6}$. But by theorem \ref{Comax_co_dih} $\Gamma(D_{6})$ is not a cograph. So $G$ is either $A_{4}$ or $C_{3}\rtimes C_{4}$.\\
Conversely,\\
a) $G$ is the group $C_{p^2}\rtimes C_{q}$. Then $G$ has unique subgroup of order $p$ and $p^2$. To show that $\Gamma(G)$ is a cograph, contrarily let $A\sim B\sim C\sim D$ be a $4$-vertex induced path in $\Gamma(G)$. If $o(A)=pq$ then $o(B)=p^{2}, o(C)=q, o(D)=p^{2}$. Since the subgroup of order $p^2$ is unique so this path is impossible.\\
Again, if $o(A)=p^{2}$ then $o(B)=q$ or $pq$ and $o(C)=p^{2}$. So such path does not exist.\\
If $o(A)=q$ then $o(B)=p^{2}, o(C)=q$ or $pq$ and $o(D)=p^2$. Thus this path can not occur.\\
Therefore in any case $\Gamma(G)$ is a cograph.\\
b)Similarly for the groups $C_{q}\rtimes C_{p^2}$ and $C_{3}\rtimes C_{4}$ we can show that $\Gamma(C_{q}\rtimes C_{p^2})$ and $\Gamma(C_{3}\rtimes C_{4})$ are cographs.\\
c) If $G$ is a group whose every non-trivial subgroups are EPPO group then it is obvious that $\Gamma(G)$ is a cograph.\\
d) If $G$ is $A_{4}$ then its every non-trivial subgroups are EPPO and hence $\Gamma(A_{4})$ is a cograph.
\end{proof}

%%%%%%%%%%%%%%%%%%%%%%%%%%%%%%%%%%%%%%%%%%%%%%%%%%%%%%

\section{Chordal Graph}
A graph is said to be a chordal graph if it does not contain any cycle of length $4$ and above. \\
Every chordal graph is a perfect graph. The classes of graph that are chordal includes a complete graph, complete bipartite graph, block graph, interval graph etc. We are now investigate the chordality of a finite nilpotent groups, $D_{n}$ and $Q_{2^n}$ as well. 
\begin{theorem}
\label{Comax_chordal_ab}
Let $G$ be a finite abelian group of prime power order. Then $\Gamma(G)$ is a chordal graph if and only if $G$ is one of the following groups:\\
a) a cyclic group of prime power order;\\
b) $C_{p^k}\times C_{p}$, where $p$ is a prime and $k \geq 1$ is a natural number;\\
c) $C_{p}\times C_{p}\times C_{p}$ (where $p$ is a prime).
\end{theorem}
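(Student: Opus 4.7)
The plan is to handle the two directions separately. For the ``if'' direction I aim to show the stronger statement that $\Gamma(G)$ is a split graph (its vertex set partitions into a clique and an independent set), which is automatically chordal. For the ``only if'' direction I exhibit an induced $C_4$ in $\Gamma(G)$ for every abelian $p$-group outside the list, organized by the rank $r=\dim_{\mathbb F_p}G/\Phi(G)$.

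For necessity, when $r\geq 4$ I work in $G/\Phi(G)\cong\mathbb F_p^r$ and choose four $2$-dimensional subspaces $W_1,W_2,W_3,W_4$ of the dual $\widehat G$ with $W_i\cap W_j=0$ exactly on the edges of the $4$-cycle $1{-}2{-}3{-}4$ (for $r=4$ the list $W_1=\langle e_1,e_2\rangle$, $W_2=\langle e_3,e_4\rangle$, $W_3=\langle e_1,e_2+e_3\rangle$, $W_4=\langle e_3,e_4+e_1\rangle$ works, and the same list works in $\mathbb F_p^r$ for any $r\geq 4$). Pulling back the annihilators $W_i^{\perp}$ to subgroups $K_i\leq G$ yields $K_iK_j=G\Leftrightarrow W_i\cap W_j=0$, giving an induced $C_4$. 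When $r=3$ with some $a_i\geq 2$ (so $a_1\geq 2$ after sorting), I quotient by $N=\langle y^{p}\rangle\times\langle z^{p}\rangle$ to obtain $\bar G\cong C_{p^{a_1}}\times C_p\times C_p$; any induced $C_4$ in $\Gamma(\bar G)$ lifts to $\Gamma(G)$ via $\pi^{-1}$, since for subgroups containing $N$ one has $\widetilde H_i\widetilde H_j=G$ iff $\pi(\widetilde H_i)\pi(\widetilde H_j)=\bar G$. In $\bar G$ the cyclic subgroups of order $p^{a_1}$ are the $\langle xy^jz^k\rangle$ for $(j,k)\in\mathbb F_p^2$; discarding the isolated $\langle x\rangle$, the remaining $p^2-1$ form a complete multipartite graph with $p+1$ parts of size $p-1$ indexed by $\mathbb P^1(\mathbb F_p)$, so for $p\geq 3$ two elements from each of two parts give an induced $C_4$, while for $p=2$ I use the explicit induced cycle $\langle y,z\rangle-\langle x\rangle-\langle y,x^{2^{a_1-1}}z\rangle-\langle xy\rangle-\langle y,z\rangle$. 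For $r=2$ with $G=C_{p^a}\times C_{p^b}$, $a\geq b\geq 2$, the $p^b$ cyclic subgroups of order $p^a$ are $\langle xy^j\rangle$ for $j\in\mathbb Z/p^b$ and are adjacent iff $p\nmid j_1-j_2$, producing a complete multipartite graph with $p$ parts of size $p^{b-1}\geq 2$ and hence an induced $C_4$ (the subcase $a=b$ is handled in parallel via a projective parametrization of $(\mathbb Z/p^a)^2$, with the explicit $C_4$ on $(1,0),(0,1),(1,p),(p,1)$).

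For sufficiency, case (a) is immediate because the subgroups of a cyclic $p$-group form a chain and $\Gamma(G)$ is edgeless. For (c), the proper nontrivial subgroups of $C_p\times C_p\times C_p$ are the $1$- and $2$-dimensional subspaces of $\mathbb F_p^3$; any two $2$-dimensional subspaces span $\mathbb F_p^3$ (clique) and any two $1$-dimensional subspaces span only a $2$-dimensional subspace (independent set), exhibiting $\Gamma(G)$ as split. For (b), with $G=\langle x\rangle\times\langle y\rangle$, $|x|=p^k$, $|y|=p$, I parametrize proper nontrivial subgroups as either Type~A $X_i=\langle x^{p^{k-i}}\rangle$ of order $p^i$ ($1\leq i\leq k$) or Type~B $Y_{s,b}=\langle x^{p^{k-s}},x^{p^{k-s-1}b}y\rangle$ of order $p^{s+1}$ ($0\leq s\leq k-1$, $0\leq b\leq p-1$). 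The $p+1$ maximal subgroups $X_k, Y_{k-1,0},\dots,Y_{k-1,p-1}$ form a clique, and the remaining subgroups form an independent set: the $X_i$ ($i<k$) all lie in $\langle x\rangle$ and cannot reach $G$; for $i<k$ one computes $X_i\cap Y_{s,b}=\langle x^{p^{k-\min(i,s)}}\rangle$ so $|X_iY_{s,b}|=p^{\max(i,s)+1}<p^{k+1}$; and two non-maximal $Y_{s,b},Y_{s',b'}$ with $s,s'\leq k-2$ both contain $\langle x^{p^{k-\min(s,s')}}\rangle$ of order strictly exceeding the $p^{s+s'+1-k}$ needed for adjacency. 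Thus $\Gamma(G)$ is split and hence chordal.

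The main obstacle I expect is the $r=3$ necessity step at $p=2$, where the generic complete-multipartite argument produces parts of size $p-1=1$ and collapses, so the explicit four-vertex construction above is indispensable. A secondary technicality is the quotient-pullback step, which rests on the clean observation that adjacency in $\Gamma$ is preserved and reflected under $\pi$ precisely for subgroups containing $\ker\pi$.
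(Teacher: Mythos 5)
Your overall architecture is sound and genuinely different from the paper's (the paper argues necessity by hand-built $4$-cycles in three exponent cases and argues sufficiency by chasing possible cycle vertices, whereas you reduce via $G/\Phi(G)$ and quotient-lifting and prove the stronger fact that the graphs in (a)--(c) are split). However, one necessity step as written is false. In $\bar G\cong C_{p^{a_1}}\times C_p\times C_p$ with $a_1\geq 2$, the cyclic subgroups of order $p^{a_1}$ do \emph{not} form a complete multipartite graph: every generator $xy^jz^k$ satisfies $(xy^jz^k)^p=x^p$, so all of these subgroups contain the common subgroup $\langle x^p\rangle$ of order $p^{a_1-1}$; hence any two of them have product of order at most $p^{a_1+1}<p^{a_1+2}=|\bar G|$ and are pairwise non-adjacent. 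The induced $C_4$ you extract from ``two vertices in each of two parts'' therefore does not exist, and the $r=3$, $p\geq 3$ case is left unproved. (You may be conflating this with the rank-$2$ situation $C_{p^a}\times C_p$, where the $\langle xy^j\rangle$ really are pairwise adjacent maximal subgroups.)

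The gap is easily repaired from within your own proposal: the explicit cycle you reserve for $p=2$, namely $\langle y,z\rangle\sim\langle x\rangle\sim\langle y,x^{p^{a_1-1}}z\rangle\sim\langle xy\rangle\sim\langle y,z\rangle$, is an induced $C_4$ for \emph{every} prime $p$ once $a_1\geq 2$ (the two order-$p^2$ subgroups share $\langle y\rangle$, and $\langle x\rangle$, $\langle xy\rangle$ share $\langle x^p\rangle$, so both diagonals are non-edges, while all four consecutive pairs intersect trivially). With that substitution the necessity direction is complete; the remaining steps I checked (the annihilator construction for rank $\geq 4$, the quotient-lifting lemma, the rank-$2$ multipartite analysis with its $a=b$ subcase, and the split-graph verifications for sufficiency) are correct, and the sufficiency argument is in fact sharper than the paper's, since it anticipates the split-graph classification proved later in Section 7.
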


\begin{proof}
Let $p\in \pi(G)$ be a prime and $G$ be a finite abelian group of order power of $p$ with $\Gamma(G)$ is a chordal graph. Let $G \cong C_{p^{r_1}}\times C_{p^{r_2}}\times \cdots \times C_{p^{r_k}}$. Clearly when $G$ is a cyclic group then the proof is obvious. So, we consider $G$ as a non-cyclic abelian group. Now $3$ cases arise depending on $r_{i}$ (where, $1 \leq i \leq k$).\\
\textbf{Case 1.} Let $r_{i}=1$ for all $i$.\\
In this case $G\cong (C_{p})^{k}$. Let $G$ be generated by the cyclic subgroup $H_{j}$ ($1 \leq j\leq k$), where $H_{j}=\langle (0,0,...,0,1,0,...,0)\rangle$ and $1$ is in $j$-th component. We claim that $k\leq 3$.\\
If possible let $k \geq 4$. Then $\Gamma(G)$ contains a $4$-cycle $A\sim B\sim C\sim D \sim A$ such that $A=\langle H_{1}, H_{3}, H_{4} \rangle$, $B=\langle H_{2}, H_{4},...,H_{k}\rangle$, $C=\langle H_{1}, H_{2}, H_{3}, H_{4}\rangle$, $D=\langle H_{2}, H_{5},..., H_{k}\rangle$. Hence, $G \cong C_{p}\times C_{p}\times C_{p}$ or $C_{p}\times C_{p}$.\\
\textbf{Case 2.} There exists one $r_{i}\geq 2$ and all others are $1$.\\
$G \cong C_{p^{r_1}}\times C_{p}\times \cdots \times C_{p}$. Just like Case-1, let $G$ be generated by $H_{i}$ (where, $1 \leq i \leq k$). We assert that $k =2$.\\
Suppose $k \geq 3$. Then $\Gamma(G)$ contains a $4$-vertex cycle $A\sim B\sim C\sim D \sim A$, where $A=\langle H_{1}, H_{2}\rangle, B=\langle H_{2}, H_{3}, \cdots , H_{k}\rangle, C=\langle H_{1}\rangle, D=\langle \overline{H_{1}}, H_{2}, \cdots , H_{k}\rangle$ and $\overline{H_{1}}=C_{p}$. Therefore in this case $G \cong C_{p^k}\times C_{p}$ ($k \geq 1$).\\
\textbf{Case 3.}  More than one $r_{i}\geq 2$ ($ 1\leq i \leq k$)\\
We claim that there does not exist any non-cyclic abelian group.\\
Let $k \geq 2$, $r_{1}, r_{2}\geq 2$ and $G$ be generated by $H_{1}, H_{2}, \cdots, H_{k}$, where $H_{i}$ ($1\leq i \leq k$) are defined just as in Case-1. Then $\Gamma(G)$ contains a $4$-vertex cycle $A\sim B\sim C\sim D \sim A$, where $A=\langle H_{1}, \overline{H_{2}} \rangle, B=\langle \overline{H_{1}}, H_{2}, \cdots, H_{k} \rangle, C=\langle H_{1}\rangle, D=\langle H_{2}, \cdots, H_{k}\rangle$, where $\overline{H_{1}}=\langle (p, 0, 0,\cdots, 0)\rangle, \overline{H_{2}}=\langle (0, p, 0,\cdots, 0) \rangle$. \\
Thus in all cases we get $G$ is one of the groups in (a), (b) and (c).\\
\textbf{Converse part:}\\ 
Let $G \cong C_{p^k}\times C_{p}$ ($k>1$) and $G=\langle H_{1}, H_{2}\rangle$, where $H_{1}=\langle (1, 0)\rangle, H_{2}=\langle (0, 1)\rangle$. If possible let $A_{1}\sim A_{2}\sim \cdots \sim A_{m}\sim A_{1}$ be a cycle in $\Gamma(G)$. Now $A_{1}=H_{1}$. So $A_{2}=H_{2}, A_{m}=\overline{H_{1}}H_{2}$, where $\overline{H_{1}}=\langle (p, 0)\rangle$. Then $A_{3}$ must be $H_{1}$ or $\langle H_{1}, H_{2}\rangle$. But both are not possible. This implies that there does not exist a cycle of length $\geq 4$. \\
Similarly if $k=1$ i.e $G\cong C_{p}\times C_{p}$ then as every non-trivial subgroups are maximal so any cycle of length $4$ and above always contains a chord. Hence $\Gamma(C_{p^k}\times C_{p})$ is a chordal graph.\\
Procceding in a similar approach we can prove that $\Gamma(C_{p}\times C_{p}\times C_{p})$ is a chordal graph.
\end{proof}
The above theorem tells about the chordality of the co-maximal subgroup graph of a finite abelian $p$-group. But for the non-abelian $p$-groups we can partially characterize some groups. So there arise an open question :
\begin{problem}
Characterize all non-abelian $p$-groups $G$ such that $\Gamma(G)$ is a chordal graph.
\end{problem}

\begin{theorem}
\label{Comax_nil_chordal}
Let $G$ be a finite nilpotent group such that $|\pi(G)|\geq 2$. Then $\Gamma(G)$ is chordal if and only if $G$ is either $C_{pqr}$ or $C_{p^{a}q}$ ($a\geq 1$) or $o(G)$ has exactly two distinct prime divisors such that one Sylow subgroup is cyclic group of prime order and other Sylow subgroup is a $2$-generated group of prime power order having prime exponent (where $p, q, r$ are three distinct primes).
\end{theorem}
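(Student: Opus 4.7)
The plan is to organize the argument by $|\pi(G)|$. Since $G$ is nilpotent, every subgroup of $G=\prod_{p\in\pi(G)}P_p$ decomposes as $\prod_p H_p$ with $H_p\le P_p$, and two such subgroups are adjacent in $\Gamma(G)$ iff $H_pK_p=P_p$ for every $p\in\pi(G)$. This reduces adjacency to independent Sylow-by-Sylow checks and is the main computational tool.

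First I would rule out $|\pi(G)|\ge 4$. Given four primes $p_1,p_2,p_3,p_4$, let $H_i=\prod_{p\notin S_i}P_p$ for the index sets $S_1=\{p_1\}$, $S_2=\{p_2\}$, $S_3=\{p_1,p_3\}$, $S_4=\{p_2,p_4\}$; adjacency of $H_i$ and $H_j$ is equivalent to $S_i\cap S_j=\emptyset$, and one verifies directly that $H_1\sim H_2\sim H_3\sim H_4\sim H_1$ is induced. Next, for $|\pi(G)|=3$ with $G=P\times Q\times R$, I would show that if any Sylow subgroup---say $P$---has order $\ge p^2$, then for any proper nontrivial $L<P$ the four subgroups $1\times Q\times R$, $P\times 1\times 1$, $L\times Q\times R$ and $P\times 1\times R$ form an induced $C_4$ (by a direct six-pair check). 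Hence every Sylow is of prime order and $G\cong C_{pqr}$; the converse is immediate from inspection, since $\Gamma(C_{pqr})$ has only six vertices, giving a triangle on the three prime-index maximal subgroups with three attached pendants.

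Turning to $|\pi(G)|=2$ and $G=P\times Q$, the analogous quadruple $1\times Q$, $P\times 1$, $L\times Q$, $P\times M$ with $L<P$ and $M<Q$ proper nontrivial forces an induced $C_4$ whenever both $|P|\ge p^2$ and $|Q|\ge q^2$; so WLOG $|Q|=q$. If $P$ is cyclic then $G\cong C_{p^aq}$, and inspection shows $\Gamma(G)$ is a star centered at the unique index-$p$ maximal subgroup, with some isolated vertices. If $P$ is noncyclic, the task---and the main obstacle---is to show that $P$ must be $2$-generated of exponent $p$. The strategy is in two subcases: if $d(P)\ge 3$, the Burnside basis theorem supplies three pairwise-coprime maximal subgroups of $P$, from which one selectively attaches the $C_q$-factor to build an induced $C_4$; and if $P$ has an element $x$ of order $p^2$, one combines $\langle x\rangle$, its unique order-$p$ subgroup $\langle x^p\rangle$, and a subgroup $\langle y\rangle\not\subseteq\langle x\rangle$ in a four-vertex configuration modeled on the explicit induced $C_4$ that arises already for $C_{p^2}\times C_p\times C_q$. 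For the converse in the remaining case, when $P$ is $2$-generated of exponent $p$, I would partition the vertex set of $\Gamma(G)$ into the ``$Q$-full'' subgroups $\{H\times C_q:H\le P\}$ together with $P\times 1$---which form a clique modulo the pendant vertex $1\times C_q$---and the ``$Q$-empty'' subgroups $\{H\times 1:H<P\}$, which are pairwise nonadjacent; then any cycle of length $\ge 4$ must include at least two clique vertices whose internal clique edge becomes a chord, so $\Gamma(G)$ is chordal.
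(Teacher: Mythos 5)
Your forward direction is essentially the paper's argument, reorganized around the (correct and useful) observation that subgroups of a nilpotent group split Sylow-wise and adjacency can be tested prime by prime; your explicit $C_4$'s for $|\pi(G)|\ge 4$, for $|\pi(G)|=3$ with a non-prime-order Sylow, and for $|\pi(G)|=2$ with both Sylows of composite order all check out and match the paper's constructions in spirit. The two subcases for non-cyclic $P$ ($d(P)\ge 3$, and an element of order $p^2$) are the right targets but are only sketched; note that the paper's own version of the second subcase only rules out \emph{generators} of order $>p$, which does not by itself give exponent $p$, so your formulation is actually the one that would be needed.

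The genuine gap is in your converse for the third family. The claim that $\{H\times C_q : 1<H<P\}\cup\{P\times 1\}$ is a clique is false as soon as $|P|\ge p^3$: adjacency of $H\times C_q$ and $K\times C_q$ requires $HK=P$, and a $2$-generated group of exponent $p$ and order $p^3$ (the extraspecial group of exponent $p$, $p$ odd) has plenty of pairs of nontrivial subgroups with $HK\ne P$ --- e.g.\ any two subgroups of order $p$. Worse, the statement itself fails for such $P$: taking distinct maximal subgroups $M_1,M_2<P$ and distinct order-$p$ subgroups $K_1,K_2$ contained in neither $M_i$ (these exist by counting), the four vertices $M_1\times 1,\ K_1\times C_q,\ M_2\times 1,\ K_2\times C_q$ form an induced $C_4$, since $M_iK_j=P$ while $M_1M_2\times 1\ne G$ and $K_1K_2\ne P$. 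So no argument can close this case; the family must be cut down to $P\cong C_p\times C_p$ (for which your clique partition does work, as every nontrivial proper subgroup of $C_p\times C_p$ is maximal). You should be aware that the paper's own proof silently omits the converse for this family altogether, so your attempt exposes a real defect in the theorem as stated rather than a defect only in your write-up; but as a proof of the statement given, the converse step is wrong and unrepairable for $|P|\ge p^3$.
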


\begin{proof}
Suppose $G$ is a finite nilpotent group which is not a $p$-group such that $\Gamma(G)$ is chordal. Let $G\cong P_{1}\times P_{2}\times \cdots \times P_{r}$, where $P_{i}$'s are the Sylow $p_{i}$-subgroups of order $p_{i}^{\alpha_{i}}$ for $1\leq i \leq r$.\\
First we observe that, $r \leq 3$. If possible let, $r\geq 4$. Then $\Gamma(G)$ contains a $4$-vertex cycle: $A, B, C, D, A$, where $|A|=p_{1}^{\alpha_{1}}p_{3}^{\alpha_{3}}, |B|=p_{1}^{{\alpha_{1}}-1}p_{2}^{\alpha_{2}}\cdots p_{r}^{\alpha_{r}}, |C|=p_{1}^{\alpha_{1}}p_{2}^{{\alpha_{2}}-1}p_{3}^{\alpha_{3}}\cdots p_{r}^{\alpha_{r}}, |D|=p_{2}^{\alpha_{2}}p_{4}^{\alpha_{4}}\cdots p_{r}^{\alpha_{r}}$. Hence $o(G)$ can have at most $3$ distinct prime divisors.\\
Let \textbf{$|\pi(G)|=3$} and $|G|=p^{a}q^{b}r^{c}$, where $a, b, c \geq 1$.\\
We claim that if any one of $a, b, c $ is larger than $1$ then $\Gamma(G)$ contains a $4$-vertex cycle.\\
Let $a>1$ and $|G|=p^{a}qr$. Now $\Gamma(G)$ contains a $4$-vertex cycle $A, B, C, D, A$, where $|A|=p^{a}, |B|=p^{a-1}qr, |C|=p^{a}r, |D|=qr$. Thus, $a=b=c=1$ and then $G\cong C_{pqr}$.\\
Let \textbf{$|\pi(G)|=2$} and $|G|=p^{a}q^{b}$.\\
Let $P, Q$ be the Sylow $p$- and  $q$-subgroups of $G$ resp. If both $a, b \geq 2$ then $\Gamma(G)$ contains a $4$-vertex cycle: $(A, B, C, D, A)$, where $|A|=p^{a}, |B|=p^{a-1}q^{b}, |C|=p^{a}q^{b-1}, |D|=q^{b}$. Hence at least one of $a, b$ is $1$ (say $b=1$). Then $o(G)=p^{a}q$.\\
If both Sylow subgroups are cyclic then $G \cong C_{p^{a}q}$.\\
Now, let the Sylow $p$-subgroup is non-cyclic. Then we observe that $P$ must be $2$-generated; otherwise if $G=\langle x, y, z, w\rangle$, where $o(x)=p^{l}, o(y)=p^{m}, o(z)=p^{n}, o(w)=q$, then $\Gamma(G)$ contains a $4$-vertex cycle: $(A, B, C, D, A)$ such that $A=\langle x, y\rangle, B=\langle z, w \rangle, C=\langle x, y, z \rangle$ and $D=\langle y, z, w \rangle$. Thus, Sylow $p$-subgroups of $G$ must be $2$-generated. Let $G=\langle x, y, z\rangle$ such that $o(x)=p^{l}, o(y)=p^{m}$ and $o(z)=q$. We claim that both $l, m$ must be equal to  $1$. Otherwise if any one of $l, m$ is greater than $1$ (say $l>1$) then $\Gamma(G)$ contains a $4$-vertex induced cycle: $(X, Y, Z, W, X)$, where $X=\langle x, y \rangle, Y=\langle x^{p}, y, z \rangle, Z=\langle x, z \rangle, W=\langle y, z \rangle$. Therefore $l=m=1$ and so $G$ is the group such that one Sylow subgroup is cyclic group of prime order and other is a $2$-generated $p$-group of exponent $p$.\\
\textbf{Converse Part:}\\
a) If $G$ is $C_{pqr}$ then the non-trivial subgroups are of order $p, q, r, pq, pr, qr$. One can easily check that $\Gamma(G)$ cannot have any cycle of length $4$, $5$ or $6$.\\
b) Let $G\cong C_{p^{a}q}$. If possible let $\Gamma(G)$ contain a cycle of length $4$ and above. Without loss generality, let the first vertex has the order $p^{a}$. The next vertex must be of order $p^{a-1}q$. This gives the third vertex has order $p$; elsewhere first vertex will be adjacent to third vertex. Now the $4$-th vertex has order divisible by $pq$. Then in any aspect we obtain first and $4$-th vertex are adjacent. Hence $\Gamma(G)$ cannot contains a cycle of length $4$ and above.
\end{proof}
Motivated from this theorem we simply state the corollary below.
\begin{corollary}
Let $G$ be a finite abelian group. Then $\Gamma(G)$ is chordal if and only if $G$ is either $C_{pqr}$ or $C_{p^{a}q}$ ($a\geq1$) or $C_{p}\times C_{pq}$ or $C_{p^k}$ or $C_{p}\times C_{p}\times C_{p}$ or $C_{p^k}\times C_{p}$ ($k\geq 1$) or a cyclic group of prime power order, where $p, q, r$ are distinct primes.
\end{corollary}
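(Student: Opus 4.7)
The plan is to obtain this corollary as a direct consequence of Theorems~\ref{Comax_chordal_ab} and~\ref{Comax_nil_chordal}, exploiting the fact that every finite abelian group is nilpotent. I would split the argument into two cases according to the size of $\pi(G)$.

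First I would handle the case $|\pi(G)|=1$, in which $G$ is a finite abelian $p$-group. Here Theorem~\ref{Comax_chordal_ab} applies verbatim and delivers exactly three families: a cyclic group of prime power order, $C_{p^k}\times C_p$ with $k\geq 1$, and $C_p\times C_p\times C_p$. These are precisely the prime-power entries in the statement of the corollary.

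Next I would treat the case $|\pi(G)|\geq 2$ by invoking Theorem~\ref{Comax_nil_chordal}. That theorem gives three possibilities for a finite nilpotent group: $C_{pqr}$, $C_{p^{a}q}$ with $a\geq 1$, or a group whose order has exactly two distinct prime divisors with one Sylow subgroup cyclic of prime order and the other a $2$-generated group of prime power order having prime exponent. The first two possibilities are already cyclic (hence abelian) and match the corollary. For the third, since $G$ is abelian, the $2$-generated Sylow subgroup of prime exponent is forced by the structure theorem to be $C_p\times C_p$; combining with the cyclic Sylow of prime order yields $G\cong C_p\times C_p\times C_q\cong C_p\times C_{pq}$, which is the remaining entry in the list.

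Both implications are covered: the forward direction is the case analysis above, and the backward direction is inherited from the converse parts of Theorems~\ref{Comax_chordal_ab} and~\ref{Comax_nil_chordal}, since every group listed in the corollary is realized as an abelian instance of one of the forms appearing there. The only mildly non-trivial step is the translation in the third case of Theorem~\ref{Comax_nil_chordal} to the abelian form $C_p\times C_{pq}$, but this is immediate once one observes that an abelian $p$-group of exponent $p$ on two generators is $C_p\times C_p$. Thus the proof is essentially an assembly of the two referenced theorems, with no real obstacle beyond this identification.
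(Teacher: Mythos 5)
Your proposal is correct and follows exactly the route the paper intends: the corollary is stated immediately after Theorem~\ref{Comax_nil_chordal} with no separate proof, being an assembly of Theorem~\ref{Comax_chordal_ab} (for the $p$-group case) and Theorem~\ref{Comax_nil_chordal} (for $|\pi(G)|\geq 2$), with the abelian specialization of the ``$2$-generated Sylow subgroup of prime exponent'' case to $C_p\times C_p$, yielding $C_p\times C_p\times C_q\cong C_p\times C_{pq}$, exactly as you observe.
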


\begin{theorem}
$\Gamma(D_{n})$ is a chordal graph if and only if $n$ is either a power of an odd prime or $n=4$.
\end{theorem}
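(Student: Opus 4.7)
The plan is to establish both directions using a characterization of adjacency in $\Gamma(D_n)$ analogous to Proposition~0.1. With the two types of proper nontrivial subgroups of $D_n$---Type-I $\langle r^d\rangle$ for $d\mid n$, and Type-II $\langle r^d, r^i s\rangle$ for $d\mid n$, $0\le i<d$---one derives that two Type-I subgroups are never adjacent; Type-I $\langle r^{d_1}\rangle$ is adjacent to Type-II $\langle r^{d_2}, r^i s\rangle$ iff $\gcd(d_1,d_2)=1$; and two Type-II subgroups $\langle r^{d_1}, r^{i_1}s\rangle, \langle r^{d_2}, r^{i_2}s\rangle$ are adjacent iff $\gcd(d_1,d_2)=1$, or $\gcd(d_1,d_2)=2$ with $i_1-i_2$ odd.

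For the ``if'' direction, first assume $n=p^k$ with $p$ an odd prime. Every nontrivial divisor of $n$ is a positive power of $p\ge 3$, so the gcd of any two such divisors is $\ge p\ge 3$, hence never $1$ or $2$. Consequently no two proper Type-II subgroups are adjacent, and only $\langle r\rangle$ (among the Type-I's) is adjacent to any proper Type-II. The graph $\Gamma(D_{p^k})$ is therefore a star centered at $\langle r\rangle$ with the Type-II subgroups as leaves, together with isolated Type-I vertices---a forest, and hence chordal. For $n=4$ I would verify directly, by enumerating the eight proper nontrivial subgroups and their adjacencies, that no induced cycle of length $\ge 4$ arises.

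For the ``only if'' direction I argue by contrapositive: suppose $n\ne 4$ and $n$ is not a power of an odd prime, and exhibit an induced cycle of length $\ge 4$. The principal case is that $n$ has at least two distinct prime divisors, forcing some odd prime $p\mid n$ and another prime $q\mid n$. Then
\[
\langle r^p, s\rangle \;-\; \langle r\rangle \;-\; \langle r^p, rs\rangle \;-\; \langle r^q\rangle \;-\; \langle r^p, s\rangle
\]
is an induced $C_4$: the four cycle-edges follow from $\gcd(p,1)=\gcd(p,q)=1$, while the two diagonals $\langle r^p, s\rangle\sim\langle r^p, rs\rangle$ and $\langle r\rangle\sim\langle r^q\rangle$ both fail---the former because $\gcd(p,p)=p\ge 3\notin\{1,2\}$, and the latter because both are Type-I.

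The remaining subcase $n=2^k$ with $k\ge 3$ is the delicate one: no odd prime divides $n$, so the $C_4$ above cannot be adapted. The main obstacle is that $\langle r\rangle$ is a universal vertex among the proper Type-II subgroups (so any cycle through it inherits chords) while among the remaining Type-II subgroups only those with $d=2$ (namely $\langle r^2, s\rangle$ and $\langle r^2, rs\rangle$) can be adjacent to others, making the non-$\langle r\rangle$ subgraph highly tree-like. I would complete the proof by searching among Type-II subgroups with mixed $d$-values in $\{2, 4, \dots, 2^{k-1}\}$ for a choice of four vertices realizing an induced 4-cycle, verifying the four cycle-edges and two non-diagonals via the gcd rule; this case demands the most careful adjacency bookkeeping and is where the bulk of the work would lie.
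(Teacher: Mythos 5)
Your adjacency criterion for $D_n$ is correct, and with it your treatment of two of the three cases is sound and in places cleaner than the paper's. For $n=p^k$ with $p$ odd you show directly that $\Gamma(D_{p^k})$ is a star centred at $\langle r\rangle$ plus isolated vertices, whereas the paper deduces chordality from its cograph theorem and then rules out $4$-cycles by a case check. For $n$ with two distinct prime divisors your induced cycle $\langle r^p,s\rangle\sim\langle r\rangle\sim\langle r^p,rs\rangle\sim\langle r^q\rangle\sim\langle r^p,s\rangle$ checks out against the gcd rule; the paper uses the different (also valid) cycle $\langle r^{q}\rangle \sim \langle r^{p}, s\rangle \sim \langle r^{q}, s \rangle \sim \langle r^{p}, rs \rangle\sim\langle r^{q}\rangle$.

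The genuine gap is the case $n=2^k$, $k\ge 3$, which you leave as ``search among Type-II subgroups with mixed $d$-values for an induced $4$-cycle.'' That search cannot succeed, and your own structural remarks already explain why: for divisors of $2^k$ the gcd of two $d$-values is the smaller of the two, so a Type-II--Type-II edge forces one endpoint to have $d=2$; hence the subgraph induced on the Type-II vertices is a double star with adjacent centres $\langle r^2,s\rangle$ and $\langle r^2,rs\rangle$ (a tree), while $\langle r\rangle$ is adjacent to every Type-II vertex and the remaining Type-I vertices are isolated. A vertex adjacent to every other non-isolated vertex cannot lie on an induced cycle of length at least $4$, and a tree contains no cycle at all, so $\Gamma(D_{2^k})$ has no induced cycle of length $\ge 4$. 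The paper fills this case with the claimed $4$-cycle $\langle r^{2}, s\rangle \sim \langle r^{4}, rs \rangle \sim \langle r^{4}, r^{2}s \rangle \sim \langle r^{2}, rs \rangle \sim \langle r^{2}, s\rangle$, but the pair $\langle r^{4}, rs \rangle$, $\langle r^{4}, r^{2}s \rangle$ fails the (correct) criterion you derived: the gcd of their $d$-values is $4$, and indeed the product of these two subgroups has order $n$, not $2n$, so it is not an edge. Your inability to finish this case is therefore not a bookkeeping defect; it exposes an error in the paper's ``only if'' argument for $n=2^k\ge 8$, where by the analysis above $\Gamma(D_{2^k})$ is in fact chordal. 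You should either produce a genuinely induced cycle (none exists) or record this case as a counterexample to the stated equivalence.
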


\begin{proof}
Let $\Gamma(D_{n})$ be a chordal graph.\\
Firstly, let $p, q$ be two primes with $n=pq$ (where $p>q$) and $r, s$ generate $D_{n}$, where $|s|=2$. For $n=pq$, $\Gamma(D_{n})$ contains a $4$-cycle $\langle r^{q}\rangle \sim \langle r^{p}, s\rangle \sim \langle r^{q}, s \rangle \sim \langle r^{p}, rs \rangle \sim \langle r^{q}\rangle$. Similarly if $n$ has $3$ or more prime divisors then one can obtain a $4$-cycle in $\Gamma(D_{n})$. This implies $n$ must be a prime power.\\
If $n=2^{k}$ such that $n \geq 8$, then $\Gamma(D_{n})$ also carries a $4$-cycle $\langle r^{2}, s\rangle \sim \langle r^{4}, rs \rangle \sim \langle r^{4}, r^{2}s \rangle \sim \langle r^{2}, rs \rangle \sim \langle r^{2}, s\rangle $. Therefore in this case $n=4$. Thus $n$ is either $4$ or a power of an odd prime.\\
\textbf{Converse Part:}\\
Suppose $n=p^{k}$, where $p$ is an odd prime. Since, in this case $\Gamma(D_{n})$ is a cograph (by Theorem \ref{Comax_co_dih}) so we only need to check whether a $4$-cycle exists or not in $\Gamma(D_{n})$. If possible let $\Gamma(D_{n})$ has a $4$-cycle say $A\sim B\sim C\sim D \sim A$. Without loss of generality, let $A=\langle r^{p}, s \rangle$. Then $B$ must be $\langle r \rangle$ as $p$ odd. So $C$ must be $\langle r^{p}, rs \rangle$. But for any choices of $D$ we have $C\nsim D$. This implies that $\Gamma(D_{p^k})$ is a chordal graph.\\
Similarly, we can show that $\Gamma(D_{4})$ is also a chordal graph. 
\end{proof}
\begin{theorem}
\label{Comax_chordal_dicyclic}
$\Gamma(Q_{2^n})$ is chordal if and only if $n$ is a prime power.
\end{theorem}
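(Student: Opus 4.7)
The plan is to prove both directions by mirroring the strategy established for the chordality of $D_n$ and for Theorem~\ref{Comax_co_dicyclic}, exploiting the subgroup classification and adjacency criteria of the Proposition.

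For the forward direction I argue by contrapositive. If $n$ has two distinct prime divisors $p>q$ (so $p\geq 3$), the plan is to exhibit the induced $4$-cycle
\[\langle x^q\rangle \sim \langle x^p,y\rangle \sim \langle x^q,y\rangle \sim \langle x^p,xy\rangle \sim \langle x^q\rangle.\]
All four edges follow from $\gcd(p,q)=1$: the two Type-I/Type-II edges by part (a), and the two Type-II/Type-II edges by (b.i). I then verify that both diagonals are non-edges, which is the key check: $\langle x^q\rangle\not\sim\langle x^q,y\rangle$ since $\gcd(q,q)=q\neq 1$ violates (a), and $\langle x^p,y\rangle\not\sim\langle x^p,xy\rangle$ since $\gcd(p,p)=p\geq 3$ blocks both (b.i) and (b.ii). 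This induced $C_4$ contradicts chordality.

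For the reverse direction, suppose $n=p^k$. By Theorem~\ref{Comax_co_dicyclic} the graph $\Gamma(Q_{2^n})$ is a cograph, so it contains no induced $P_4$; since every cycle $C_m$ with $m\geq 5$ contains an induced $P_4$, it suffices to rule out an induced $C_4$. Suppose $A\sim B\sim C\sim D\sim A$ were one. Because two Type-I subgroups are never adjacent (part (a)), the Type-I vertices of the cycle form an independent set in $C_4$ and so occupy at most two diagonally opposite positions. I case-analyze on the number of Type-I vertices present.

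If two opposite vertices are Type-I, the four coprimality conditions required by (a) across the four edges quickly force their $d$-indices to be $1$, collapsing both of them to $\langle x\rangle$, contradicting distinctness. If exactly one vertex is Type-I, the same coprimality analysis identifies it with $\langle x\rangle$, which is adjacent to every proper Type-II subgroup and therefore produces a forbidden chord. If all four vertices are Type-II, then for $p$ odd the only route to adjacency is (b.i), which again forces some $d$-index to equal $1$ and hence a whole-group collapse. The main obstacle is the remaining subcase $p=2$ with all four vertices Type-II, where (b.ii) genuinely contributes edges; here one exploits that there are only two distinct Type-II subgroups with $d=2$, namely $\langle r^2,s\rangle$ and $\langle r^2,rs\rangle$, and combines this limited supply with the parity constraints of (b.ii) on the indices $i_1,i_2,i_3,i_4$ around the cycle to conclude that no induced $C_4$ can exist.
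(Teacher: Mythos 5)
Your forward direction is the paper's own argument: the same four subgroups $\langle x^q\rangle,\ \langle x^p,y\rangle,\ \langle x^q,y\rangle,\ \langle x^p,xy\rangle$ form the cycle, and you add the explicit (and worthwhile) check that both diagonals fail the adjacency criteria. Your reduction in the converse --- cograph by Theorem \ref{Comax_co_dicyclic}, hence no induced $P_4$, hence only an induced $C_4$ needs to be excluded --- is also exactly the paper's reduction.

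The gap is in your case analysis for the converse, specifically the case of two opposite Type-I vertices (and, for the same reason, the case of exactly one Type-I vertex). The four applications of criterion (a) around the cycle only force the Type-I parameters $d_1,d_3$ to be \emph{coprime to} the Type-II parameters $d_2,d_4$; they do not force $d_1=d_3=1$. With $n=p^k$ and $p$ odd, take $d_2=d_4=p$: then $d_1,d_3\in\{1,2\}$ both satisfy every coprimality requirement, giving the distinct non-adjacent Type-I vertices $A=\langle x\rangle$ and $C=\langle x^{2}\rangle$, each adjacent by (a) to $B=\langle x^{p},y\rangle$ and $D=\langle x^{p},xy\rangle$, while $B\nsim D$ by (b) since $(p,p)=p\geq 3$. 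So the configuration $\langle x\rangle\sim\langle x^{p},y\rangle\sim\langle x^{2}\rangle\sim\langle x^{p},xy\rangle\sim\langle x\rangle$ passes every adjacency and non-adjacency test of the Proposition and is not eliminated by your argument; your assertion that ``the coprimality conditions quickly force their $d$-indices to be $1$'' is false as stated, and the one-Type-I case inherits the same problem because $\langle x^{2}\rangle$, unlike $\langle x\rangle$, is not adjacent to every proper Type-II subgroup. For what it is worth, the paper's own converse simply declares that any $4$-cycle must begin $\langle x^{p},y\rangle\sim\langle x\rangle\sim\langle x^{p},xy\rangle$ and silently skips exactly this configuration, so neither proof disposes of it; but a correct write-up would have to confront it directly rather than claim it collapses.
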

\begin{proof}
Let $\Gamma(Q_{2^n})$ is a chordal graph.\\
Firstly let $n=pq$, where $p, q$ are distinct primes with $p>q$, and $x, y$ be two generators of $Q_{2^n}$.  In this case $\Gamma(Q_{2^n})$ contains a $4$-cycle $A\sim B\sim C\sim D\sim A$ with $A, B, C, D$ as follows: $\langle r^{q}\rangle \sim \langle r^{p}, s\rangle \sim \langle r^{q}, s \rangle \sim \langle r^{p}, rs \rangle$ (respectively). Following the similar manner we can construct a $4$-cycle in case of $n$ is divisible by more than $2$ distinct prime divisors. This gives $n$ must be a prime power.\\
Conversely, let $n=p^{k}$. As $\Gamma(Q_{2^n})$ (see Theorem \ref{Comax_co_dicyclic}) is a cograph so we only check the existence of a $4$-cycle in this case. If possible let $A\sim B\sim C\sim D\sim A$ be a $4$-cycle. Clearly, form the adjacency condition we must have $A=\langle x^{p}, y\rangle, B=\langle x \rangle, C=\langle x^{p}, xy \rangle$. Then for any choices of $D$ we have $C\nsim D$. This proves that $\Gamma(Q_{2^n})$ is a chordal graph when $n=p^k$.
\end{proof}

%%%%%%%%%%%%%%%%%%%%%%%%%%%%%%%%%%%%%%%%%%%%%%%%%%%%%%

\section{Threshold Graph and Split Graph}
A graph is called a Threshold Graph if it does not contain $\{P_{4}, C_{4}, 2K_{2}\}$ as its induced subgraph.\\
Threshold graph is closed under the operations of disjoint union and complement. This is one of the important subclass of a cograph, split graph. A Split graph is a graph whose vertex set can be partitioned into a clique and an independent vertex set. It forbids $\{C_{4}, C_{5}, 2K_{2}\}$. A split graphs are graphs that are both chordal and the complements of chordal graphs.  In this section we classify the finite nilpotent groups $G$ such that $\Gamma(G)$ is a threshold graph. Moreover, we explore all finite abelian groups for which its co-maximal subgroup graph is a split graph.

\begin{theorem}
Let $G$ be a finite nilpotent group. Then $\Gamma(G)$ is a threshold graph if and only if  $G$ is one of the following groups:\\
a) a cyclic group of prime power order;\\
b) $C_{p}\times C_{p}$;\\
c) $Q_{2^{p^k}}$ (where, $p^k \geq 3$ and $p$ is a prime);\\
d) $C_{p^{a}q}$, where $a \geq 1$.\\
Provided that $p, q$ are distinct prime divisors of $|G|$.
\end{theorem}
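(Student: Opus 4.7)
The plan is to exploit the classical equivalence that a graph is threshold if and only if it is simultaneously $P_4$-free, $C_4$-free, and $2K_2$-free, i.e.\ simultaneously a cograph and a chordal graph with no induced $2K_2$. This lets me start from the cograph characterizations (Theorems 5.5, 5.6 and the $Q_{2^n}$-result 5.2) and the chordal characterizations (Theorems 6.1, 6.3 and the $Q_{2^n}$-result 6.5) already established for finite nilpotent groups, intersect the lists to obtain a short set of candidates, and then isolate the survivors by the extra $2K_2$-freeness condition.

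First I would dispose of the mixed-prime case. For $|\pi(G)|\ge 3$, Theorem 5.6 already precludes $\Gamma(G)$ from being a cograph, so it cannot be threshold. For $|\pi(G)|=2$, Theorem 5.6 forces $G\cong C_{p^{a}q^{b}}$ with $a,b\ge 1$, and Theorem 6.3 further forces $\min(a,b)=1$, giving (up to relabelling) $G\cong C_{p^{a}q}$ (item (d)). To confirm this is threshold, I would describe $\Gamma(C_{p^{a}q})$ explicitly from the divisor lattice of $p^{a}q$: two subgroups $H,K$ are co-maximal in a cyclic group iff $\mathrm{lcm}(|H|,|K|)=|G|$, so the unique subgroup of order $p^{a}$ is adjacent to every subgroup of order divisible by $q$, the subgroups of orders $p,p^{2},\ldots,p^{a-1}$ are isolated, and no further edges exist. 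This star-plus-isolates graph is visibly $\{P_{4},C_{4},2K_{2}\}$-free.

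Next I would treat the $p$-group case by splitting into abelian and non-abelian. In the abelian sub-case, intersecting Theorem 5.5 with Theorem 6.1 leaves the candidates $C_{p^{a}}$, $C_{p}\times C_{p}$, $C_{p^{k}}\times C_{p}$ for $k\ge 2$, and $C_{p}\times C_{p}\times C_{p}$. The cyclic $p$-group yields the edgeless graph (the subgroup lattice is a chain, so no two proper non-trivial subgroups are co-maximal), and $C_{p}\times C_{p}$ yields $K_{p+1}$; both are threshold. The last two candidates are eliminated by exhibiting an explicit induced $2K_{2}$: for $C_{p^{k}}\times C_{p}$ one chooses the cyclic maximal $\langle x\rangle$ paired with a subgroup of order $p$ outside it, together with a second disjoint co-maximal pair of the same type; for $C_{p}\times C_{p}\times C_{p}$ one picks two distinct $2$-dimensional subgroups and two lines, each contained in one subgroup but not the other, to generate two disjoint edges. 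In the non-abelian sub-case, Theorems 5.2 and 6.5 restrict $Q_{2^{n}}$ to prime-power $n$, and Proposition 1.1 shows that every Type I subgroup other than $\langle r\rangle$ is isolated (the gcd condition forces the other parameter to be $1$, but proper Type II subgroups have $d\ge 2$). Thus checking the threshold property reduces to analysing the induced subgraph on $\langle r\rangle$ together with the Type II vertices, using the gcd-and-parity rule of Proposition 1.1(b); for $Q_{8}$ this yields $K_{3}+K_{1}$, which is threshold, and for the remaining allowed $p^{k}$ I would verify the claim directly from the adjacency rules.

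The main obstacle will lie in the non-abelian sub-case: although Proposition 1.1(b) localises the possible edges to pairs of Type II subgroups whose $d$-parameters have $\gcd$ equal to $1$ or $2$ (and for proper subgroups only the value $2$ can occur), the number of Type II vertices grows with $n$, so one must carefully rule out $2K_{2}$ configurations formed by two subgroups with $d=2$ and two subgroups with $d$ a higher power of $2$, paired via the odd-parity condition on $i-j$. The abelian eliminations and the description of $\Gamma(C_{p^{a}q})$ are comparatively mechanical once the candidate list is in hand.
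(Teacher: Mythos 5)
Your overall strategy (threshold $=P_4$-free $+\,C_4$-free $+\,2K_2$-free, intersect the cograph and chordal classifications, then filter by $2K_2$) is the same as the paper's, and your handling of $|\pi(G)|\ge 2$ and of the cyclic and $C_p\times C_p$ cases is sound. The proof breaks down, however, exactly at the two abelian eliminations you call ``comparatively mechanical''. In a $p$-group every maximal subgroup is normal, so for distinct maximal subgroups $M,N$ the product $MN$ is a subgroup properly containing $M$ and hence equals $G$: the maximal subgroups form a clique in $\Gamma(G)$. Moreover, in $G=C_{p^k}\times C_p$ two proper subgroups $H,K$ satisfy $HK=G$ iff their images in $G/pG\cong C_p\times C_p$ are two distinct lines, and every line other than the image of $\langle(0,1)\rangle$ has as its full preimage a cyclic maximal subgroup; consequently every edge of $\Gamma(C_{p^k}\times C_p)$ is incident to a maximal subgroup. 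Two disjoint edges therefore always carry a pair of distinct maximal subgroups, which are adjacent, so the configuration you describe (two disjoint pairs ``cyclic maximal plus an outside subgroup of order $p$'') is never an induced $2K_2$. The same happens in $C_p\times C_p\times C_p$, where every edge contains an index-$p$ subgroup and any two such subgroups are adjacent. In fact $\Gamma(C_{p^k}\times C_p)$ is a complete multipartite graph (plus isolated vertices) in which only one part has size exceeding one; it is simultaneously $P_4$-free, $C_4$-free and $2K_2$-free --- one can check directly that $\Gamma(C_4\times C_2)$ is built from a single vertex by adding isolated and dominating vertices. So this candidate cannot be eliminated by your method, and the forward direction as you have structured it cannot terminate in the stated list (this points to a defect in the statement itself rather than merely in your argument, but as a proof of the theorem as written the step fails).

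Two further points. First, $C_p\times C_p\times C_p$ should never enter your candidate list: it is $3$-generated, hence not a cograph by Proposition \ref{prop_cograph} and Theorem \ref{Comax_co_p}, so it is already excluded before the $2K_2$ stage; your proposed $2K_2$ inside it likewise does not exist. Second, your non-abelian $p$-group case silently assumes that the only non-abelian $p$-groups requiring attention are the generalized quaternion groups $Q_{2^n}$; no cited result reduces to this case (the paper explicitly leaves the chordality of general non-abelian $p$-groups as an open problem), so groups such as $D_4$ or the modular groups of order $p^3$ are not covered. The paper's own proof shares this last omission, but your write-up presents the reduction as if it were established.
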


\begin{proof}
Let $\Gamma(G)$ be a threshold graph. Then it is $P_{4}$-free and $C_{4}$-free as well. According to the Theorems \ref{Comax_nil_co}, \ref{Comax_nil_chordal} and  \ref{Comax_chordal_dicyclic}, $G$ takes one of the form (a)-(d).\\
Conversely,\\
(a)If $G$ is a cyclic group of prime power order then the proof is obvious.\\
(b) Suppose $G\cong C_{p}\times C_{p}$. Now it is enough to show that $\Gamma(G)$ is $2K_{2}$-free. Since any non-trivial subgroup is maximal, therefore any $3$ of them form a triangle. Thus $\Gamma(G)$ is $2K_{2}$-free and hence it is a threshold graph.\\
(c) Let $G \cong Q_{2^{p^k}}$ (where $p^k \geq 3$ and $p$ is a prime) such that $G$ be generated by $x, y$ with the order of $y$ is $4$. Let us examine whether $\Gamma(G)$ is $2K_{2}$-free or not. If possible let $\{A, B\}$ and $\{C, D\}$ form $2K_{2}$. Every non-trivial subgroup of $G$ which is not maximal must be of order either $2$ or $4$. So, let $A=\langle x^{2}, y\rangle, C=\langle x^{2}, xy\rangle$. Then if any one of $B, D$ is $\langle x\rangle$ then $A, C\sim \langle x \rangle$. On the other hand, none of $B, D$ can be a subgroup of order $4$ from the adjacency condition. Similarly, for the other choices of vertices we also obtain that $\Gamma(G)$ is $2K_{2}$-free. Hence $\Gamma(G)$ is a threshold graph.\\
(d) Let $G$ be the cyclic group $C_{p^{a}q}$ ($a \geq 1$).  We now show that $\Gamma(G)$ is $2K_{2}$-free. Let $\{A_{1}, A_{2}\}, \{B_{1}, B_{2}\}$ form $2K_{2}$.\\
If $|A_{1}|=p^{a}$ then $|A_{2}|=p^{a-1}q$. Now $|B_{1}|$ is either $q$ or $p^{i}q$ ($i<a$) or $p^{j}$ ($j<a$). But $|B_{1}| \neq q$ as in this case $A_{1}\sim B_{1}$. For the other cases, $|B_{2}|$ must be either $p^{a}$ or $p^{a}q$. The latter case is impossible since the vertices of $\Gamma(G)$ are non-triivial. Again if $|B_{2}|=p^{a}$ then $B_{2}\cong A_{1}$. Thus in all aspect $\Gamma(G)$ is $2K_{2}$-free. Similarly, for the other possible choices of $\{A_{1}, A_{2}\}, \{B_{1}, B_{2}\}$ we conclude that $\Gamma(G)$ is $2K_{2}$-free.\\
 Therefore $\Gamma(G)$ is a threshold Graph.
\end{proof}

\begin{theorem}
Let $G$ be a finite abelian group. Then $\Gamma(G)$ is a split graph if and only if $G$ is either of the following groups:\\
(a) $C_{p^{a}q}$ ($a\geq 1$);\\
(b) $C_{p}\times C_{p}\times C_{q}$;\\ 
(c) $C_{p^k}\times C_{p}$ ($k\geq 1$ is a natural number);\\
(d) $C_{p}\times C_{p}\times C_{p}$;\\
(e) a cyclic group of prime power order.\\
Provided $p, q$ are three distinct primes.
\end{theorem}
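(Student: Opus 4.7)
The overall strategy rests on the classical characterization that a graph is split if and only if it is $\{C_4,C_5,2K_2\}$-free; since a chordal graph is already $\{C_4,C_5,C_6,\ldots\}$-free, a chordal graph is split precisely when it also avoids an induced $2K_2$. My plan is therefore to invoke the classification of finite abelian groups whose co-maximal subgroup graph is chordal (the corollary following Theorem~\ref{Comax_nil_chordal}), which reduces the problem to checking $2K_2$-freeness on a short list of candidate groups.

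For the necessity direction, I would start from an arbitrary finite abelian $G$ with $\Gamma(G)$ split. Since $\Gamma(G)$ is then chordal, $G$ lies in the known chordal list; to finish, for each chordal group not appearing in (a)--(e) I would display an induced $2K_2$ in $\Gamma(G)$. Such $2K_2$'s are built directly from the adjacency rule $H\sim K\iff HK=G$: a pair of maximal subgroups of coprime index gives the first edge, and a second ``complementary-order'' coprime pair avoiding the first gives the second edge, the absence of cross-adjacencies being read off from subgroup containment.

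For the sufficiency direction, for each $G$ in (a)--(e) I would show $\Gamma(G)$ is $2K_2$-free by a case analysis on the orders of four vertices $A_1,A_2,B_1,B_2$ forming a putative $2K_2$. The case $C_{p^aq}$ was essentially carried out in the threshold-graph theorem just above and its argument carries over verbatim. For cyclic groups of prime power order the graph is trivially $2K_2$-free. For $C_p\times C_p\times C_p$ every non-trivial subgroup is either of order $p$ or of order $p^2$, and any two distinct subgroups of order $p^2$ are adjacent, so the maximals form a clique that forces any two disjoint edges to share an adjacency. An analogous pattern handles $C_p\times C_p\times C_q$ and $C_{p^k}\times C_p$ once one indexes subgroups by their $(p$-part$,\,q$-part$)$-type and applies the coprime-product rule.

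The main technical obstacle is the bookkeeping in case (b) $C_p\times C_p\times C_q$, where the subgroup lattice is the richest: one must enumerate subgroups of each of the possible orders $p,p^2,q,pq,p^2q$ and show that any disjoint non-adjacent pair of edges must introduce a cross-adjacency. The structural observation that unlocks all cases at once is that in each listed $G$ the non-trivial proper subgroups admit a clean partition into a clique $K$ (consisting of the maximal subgroups together with, when relevant, subgroups of ``full $p$-part'') and an independent set $I$ (typically the minimal subgroups and the other ``deficient'' ones), which simultaneously forbids $2K_2$ and exhibits the split partition $V(\Gamma(G))=K\sqcup I$ explicitly.
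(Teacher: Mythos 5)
Your overall route is the same as the paper's: split $\Rightarrow$ chordal, so $G$ must lie in the known chordal list, and the problem reduces to deciding $2K_2$-freeness group by group. Your sufficiency arguments are sound and essentially match the paper's: $\Gamma(C_{p^k})$ is edgeless, $\Gamma(C_{p^aq})$ is a star centred at the Sylow $p$-subgroup, and in each of the remaining cases the maximal subgroups form a clique that meets every edge, which simultaneously rules out $2K_2$ and exhibits the split partition.

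The gap is in the necessity direction, at the one point where the chordal list and the claimed split list differ, namely $C_{pqr}$. You assert that for each chordal group outside (a)--(e) one can ``display an induced $2K_2$'' from a pair of maximal subgroups of coprime index together with a second complementary coprime pair, but you never write down the four subgroups, and for $C_{pqr}$ no such configuration exists. The non-trivial proper subgroups of $C_{pqr}$ have orders $p,q,r,pq,pr,qr$, and $H\sim K$ iff $\mathrm{lcm}(|H|,|K|)=pqr$; the resulting graph is the net: the three maximal subgroups $C_{pq},C_{pr},C_{qr}$ form a triangle, and each prime-order subgroup is a pendant attached to the unique maximal subgroup of coprime order. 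Every edge meets the triangle and the triangle is a clique, so any two disjoint edges have a pair of adjacent endpoints; hence there is no induced $2K_2$, and in fact the net is itself a split graph (clique the maximal subgroups, independent set the minimal ones). So this step cannot be carried out as you describe. Note that the paper's own proof stumbles at exactly the same place: its alleged $2K_2$ in $\Gamma(C_{pqr})$ uses subgroups $C_{rs}$ and $C_{qs}$ involving a fourth prime $s$ that does not exist in a group of order $pqr$. The computation above indicates that $C_{pqr}$ should actually be admitted into the classification; at minimum your argument is incomplete here and cannot be closed without either producing an explicit induced $2K_2$ in $\Gamma(C_{pqr})$ (impossible, by the above) or amending the statement.
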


\begin{proof}
Firstly, let $\Gamma(G)$ is a split graph. So $\Gamma(G)$ must be $\{C_{4}\}$-free. By Theorem \ref{Comax_chordal_ab} and \ref{Comax_nil_chordal}, $G$ must be any one of the groups: $C_{pqr}$ ($p<q<r$ are primes) or the groups in (a)-(e).\\
As the co-maximal subgroup graph of any of these groups $G$ is a chordal graph so $\Gamma(G)$ is also $C_{5}$-free.\\
But if $G\cong C_{pqr}$ then $\Gamma(G)$ contains $2K_{2}$ by the vertices $\{A, B\}$ and $\{C, D\}$, where $A\cong C_{pq}, B\cong C_{rs}, C\cong C_{pr}, D\cong C_{qs}$. This contradicts that $\Gamma(G)$ is a split graph. Hence $G$ is any one of the groups in (a)-(e).\\
Converse Part:\\
If $G$ is one of the groups (a)-(e) then one can easily check that $\Gamma(G)$ is $2K_{2}$-free. On the other hand, $\Gamma(G)$ is also $\{C_{4}, C_{5}\}$-free. Thus $\Gamma(G)$ is a split graph.
\end{proof}

\begin{problem}
Classify all the finite nilpotent groups whose co-maximal subgroup graph is a split graph.
\end{problem}

\section{Acknowledgements}
Pallabi Manna expresses gratitude to the Department of Atomic Energy (DAE) India for providing funding during this work. The  research of Santanu Mandal is supported by the University Grants Commission (UGC) India under the beneficiary code BININ01569755. Manideepa Saha acknowledges the funding of DST-SERB-SRG (Sanction no.
SRG/2019/000475 and SRG/2019/000684, Govt. of India)
%%%%%%%%%%%%%%%%%%%%%%%%%%%%%%%%%%%%%%%%%%%%%%%%%%%%%%%%%%%%%%%%%%
\section{Statements and Declarations} \textbf{Competing Interests:} The authors made no mention of any potential conflicts of interest.


\begin{thebibliography}{99}

\bibitem{AMN}
S. Akbari, B. Miraftab, R. Nikandish,
Co-maximal graphs of subgroups of groups,
\textit{Can. Math. Bull.,}
\textbf{60(1)}, 12–25 (2017).



\bibitem{Cameron1}
P. J. Cameron, 
Graphs defined on groups,
\textit{Int. J. Group Theory} \textbf{11} (2022) 53--107.

\bibitem{Mehatari}
P.~J. Cameron, P. Manna and R. Mehatari,
On finite groups whose power graph is a cograph,
\textit{J. Algebra} \textbf{ 591 } (2021) 59--74.

\bibitem{DSK}
A. Das, M. Saha and S. A. Kaseasbeh,
On co-maximal subgroup graph of a group,
\textit{Ric. Mat.,} (2022),
DOI- https://doi.org/10.1007/s11587-022-00718-0.



\bibitem{Godsil}
C. Godsil and G. Royle,
Algebraic Graph Theory,
\textit{Springer-Verlag}, 2001.

\bibitem{Manna}
P. Manna, P.J. Cameron and R. Mehatari,
Forbidden subgraphs of power graphs,
\textit{Electron. J. Combin.,} \textbf{28(3)} (2021), P3.4, 14pp.

\bibitem{MN}
B. Miraftab, R. Nikandish, 
Co-maximal Graphs of Two Generator Groups,
 \textit{J. Algebra Appl.}, \textbf{18} (2019) pp. 13.

\bibitem{SBD}
M. Saha, S. Biswas, A. Das,
On co-maximal subgroup graph of $\mathbf{Z_n}$,
\textit{Int. J. Group Theory,} (2021),
DOI- https://doi.org/10.22108/IJGT.2021.129788.1732

%\todo[inline]{Include precise references alphabetically considering first author's title, do not include if it has no role (to our work) at all. Keep in mind that the reference list does not exceed $10-12$ contents.\\ KEEP OTHER WRITINGS UNALTERED AND DON'T REMOVE THIS BOX.}




\end{thebibliography}
\end{document}